\renewcommand\@makefnmark{\mbox{\textsuperscript{\normalfont(\@thefnmark)}}}
\newtheorem{theorem}{Theorem}[section] 
\newtheorem{lemma}[theorem]{Lemma}     
\newtheorem{corollary}[theorem]{Corollary}
\newtheorem{proposition}[theorem]{Proposition}
\newtheorem{remark}[theorem]{Remark}
\newtheorem{definition}[theorem]{Definition}
\numberwithin{equation}{section}
\newcolumntype{C}{>{$}c<{$}}
\newcommand{\fa}{\mathfrak{a}}
\newcommand{\p}{\mathfrak{p}}
\newcommand{\q}{\mathfrak{q}}
\newcommand{\fm}{\mathfrak{m}}
\newcommand{\cC}{\mathcal{C}}
\newcommand{\cF}{\mathcal{F}}
\newcommand{\cO}{\mathcal{O}}
\newcommand{\cH}{\mathcal{H}}
\newcommand{\cS}{\mathcal{S}}
\newcommand{\F}{\mathbb F}
\newcommand{\Q}{\mathbb Q}
\newcommand{\R}{\mathbb R}
\newcommand{\T}{\mathbb T}
\newcommand{\C}{\mathbb C}
\newcommand{\Z}{\mathbb Z}
\newcommand{\V}{\mathbf{V}}
\newcommand{\A}{\mathbb A}
\newcommand{\bP}{\mathbb{P}}
\newcommand{\CO}{\mathfrak{C}(\cO)}
\newcommand{\et}{\mathrm{\acute{e}t}}
\newcommand{\checkV}{\check{\mathbf{V}}}
\newcommand{\GL}{\mathrm{GL}}
\newcommand{\SL}{\mathrm{SL}}
\newcommand{\JL}{\mathrm{JL}}
\newcommand{\Sym}{\mathrm{Sym}}
\providecommand{\cInd}{\mathrm{c}\textrm{-}\mathrm{Ind}}
\newcommand{\ModGfl}[1][\cO]{\mathrm{Mod}_{G_{\Q_p}}^{\mathrm{fin}}(#1)}
\newcommand{\Modfl}[1][\cO]{\mathrm{Mod}_{G,\zeta}^{\mathrm{fin}}(#1)}
\newcommand{\ModadmF}[1][\F]{\mathrm{Mod}_{G,\zeta}^{\mathrm{adm}}(#1)}
\newcommand{\Modsm}[1][\cO]{\mathrm{Mod}_{G,\zeta}^{\mathrm{sm}}(#1)}
\newcommand{\Modlfin}[1][\cO]{\mathrm{Mod}_{G,\zeta}^{\mathrm{l.fin}}(#1)}
\newcommand{\Modladm}[1][\cO]{\mathrm{Mod}_{G,\zeta}^{\mathrm{l.adm}}(#1)}
\newcommand{\Banadm}[1][E]{\mathrm{Ban}^{\mathrm{adm}}_{G,\zeta}(#1)}
\newcommand{\Modfg}[1][\cO]{\mathrm{Mod}_{G,\zeta}^{\mathrm{fg\,aug}}(#1)}
\newcommand{\onealg}{1\text{-}\mathrm{lalg}}
\newcommand{\Shat}{\check{\mathcal S}}
\newcommand{\Modproaug}[1][\cO]{\Mod_G^{\rm pro\, aug}(\cO)}
\newcommand{\BanDadm}[1][E]{\mathrm{Ban}^{\mathrm{adm}}_{D^{\times},\zeta}(#1)}
\DeclareMathOperator{\End}{{\mathrm{End}}}
\DeclareMathOperator{\Ext}{{\mathrm{Ext}}}
\DeclareMathOperator{\Gal}{{\mathrm{Gal}}}
\DeclareMathOperator{\gr}{{\mathrm{gr}}}
\DeclareMathOperator{\Hom}{{\mathrm{Hom}}}
\DeclareMathOperator{\HT}{{\mathrm{HT}}}
\DeclareMathOperator{\Ind}{{\mathrm{Ind}}}
\DeclareMathOperator{\Irr}{{\mathrm{Irr}}}
\DeclareMathOperator{\Mod}{\mathrm{Mod}}
\DeclareMathOperator{\Nrd}{\mathrm{Nrd}}
\DeclareMathOperator{\Sp}{{\mathrm{Sp}}}
\DeclareMathOperator{\Spec}{{\mathrm{Spec}}}
\def\a{\alpha}
\def\b{\beta}
\def\g{\gamma}
\def\d{\delta}
\def\D{\Delta}
\def\e{\varepsilon}
\def\l{\lambda}
\def\L{\Lambda}
\def\m{\mu}
\def\o{\omega}
\def\s{\sigma}
\def\z{\zeta}
\newcommand{\quash}[1]{}
\begin{document}

\title{On the finite length of some $p$-adic representations of the quaternion algebra over $\Q_p$}

\author{Hao LIU} 
\author{Haoran WANG}
 
\date{\today}

\maketitle

\begin{abstract}
Let $D$ be the non-split quaternion algebra over $\Q_p$. We prove  that a class of admissible unitary Banach space representations of $D^{\times}$ are topologically of finite length. 
\end{abstract}

\setcounter{tocdepth}{1}
\tableofcontents

\section{Introduction}
Let $p$ be a prime number. Let $K$ be a finite extension of $\Q_p$ and let $E$ be a sufficiently large finite extension of $K$ with ring of integers $\cO$, a fixed uniformizer $\varpi$ and residue field $\F$. Let $D$ be the central division algebra over $K$ with invariant $1/n$. For any admissible smooth
representation $\pi$ of $\GL_n(K)$ over $\cO$-torsion modules, Scholze in \cite{Scholze} constructs a Weil-equivariant sheaf $\cF_{\pi}$ on $(\bP^{n-1}_{\C_p})_{\et}$. The cohomology groups
\begin{equation*}
\cS^i(\pi)=H_{\et}^i(\bP_{\C_p}^{n-1},\cF_{\pi}),\,i\ge0,
\end{equation*}
are admissible $D^{\times}$-representations and carry a  commuting continuous $G_K$-action. Here $G_K:=\Gal(\overline{K}/K)$. Pa{\v{s}}k\={u}nas extends Scholze's functor $\{\cS^i\}_{i\ge0}$ to the category of admissible unitary Banach space representations of $\GL_n(K)$ in \cite{Paskunas-JL}. More precisely, if $\Pi$ is an admissible unitary Banach space representation of $\GL_n(K)$ and $\Theta$ is some (equivalently any) open bounded $\GL_n(K)$-invariant $\cO$-lattice in $\Pi$, then we define 
\[
\Shat^i(\Pi):=(\varprojlim_n\cS^i(\Theta/\varpi^n))_{\rm tf}\otimes_{\cO} E,
\]
where the subscript {\rm tf} means taking the maximal Hausdorff torsion-free quotient. It is expected that Scholze’s functor realizes both $p$-adic local Langlands and Jacquet--Langlands correspondences.

Let $n=2$ and $K=\Q_{p}$. The $p$-adic local Langlands correspondence has been established in this case (see \cite{Co}, \cite{PaskunasIHES} and \cite{MR3272011}). Let $\rho\colon G_{\Q_p} \to\GL_2(E)$ be an absolutely irreducible continuous representation. We write $\Pi(\rho)$ for the associated unitary admissible Banach space representation of $\GL_2(\Q_p)$. It follows from \cite[Theorem 7.8]{Scholze} that $\Shat^1(\Pi(\rho))$ is residually of infinite length. In \cite{Paskunas-JL} Pa{\v{s}}k\={u}nas shows that $\Shat^1(\Pi(\rho))$ is of finite length in the category
of admissible unitary $E$-Banach space representations of $D^{\times}$ if and only if it has finitely many finite dimensional irreducible subquotients. Dospinescu, Pa{\v{s}}k\={u}nas and Schraen  in \cite{DPS-Crelle} prove that $\Shat^1(\Pi(\rho))$ is topologically of finite length when the difference of the Hodge--Tate--Sen weights of $\rho$ is not a non-zero integer. It is proved in \cite{HW-JL2} that $\Shat^1(\Pi(\rho))$ is topologically of finite length when $\rho$ has a ``global origin'' and $\overline{\rho}$ is sufficiently generic. The following is the main result of this article.
\begin{theorem}\label{thm::intro-1}
Assume $p\ge5$. Let $\rho\colon G_{\Q_p}\to\GL_2(E)$ be a continuous absolutely irreducible representation. Suppose
\hfill\begin{enumerate}
\item $\overline{\rho}$ is absolutely irreducible and is generic in the sense of Definition \ref{generic} or
\item $\overline{\rho}^{\rm ss}\cong\chi_1\oplus\chi_2$ with $\chi_1\chi_2^{-1}|_{I_{\Q_p}}$ non-trivial.  
\end{enumerate}
Then $\check{\mathcal{S}}^1(\Pi(\rho))$ is infinite dimensional and is topologically of finite length.
\end{theorem}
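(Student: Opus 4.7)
The infinite-dimensionality of $\check{\mathcal{S}}^1(\Pi(\rho))$ is a consequence of \cite[Theorem 7.8]{Scholze}: that result exhibits infinitely many irreducible smooth subquotients in the residual representation $\cS^1(\overline{\Pi}(\rho))$, so $\check{\mathcal{S}}^1(\Pi(\rho))$ cannot be finite-dimensional. The substance of the theorem lies in the finite length assertion, which by Pa{\v{s}}k\={u}nas's criterion recalled above amounts to bounding the total multiplicity of finite-dimensional irreducible subquotients in $\BanDadm$.

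Since $D^{\times}/\Q_p^{\times}$ is compact and $\Pi(\rho)$ has a fixed central character, each finite-dimensional irreducible subquotient is, up to twist, a smooth character of $D^{\times}$ factoring through the finite quotient $\cO_D^{\times}/(1+\varpi_D\cO_D)\cong \F_{p^2}^{\times}$ together with the reduced norm valuation. There are only finitely many such characters for the prescribed central character, so the problem reduces to showing that for each such $\sigma$,
\[
m(\sigma):=\dim_E \Hom_{D^{\times}}^{\mathrm{cts}}\bigl(\sigma,\check{\mathcal{S}}^1(\Pi(\rho))\bigr)<\infty.
\]

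My plan is to pass to an open bounded $\GL_2(\Q_p)$-invariant lattice $\Theta\subset\Pi(\rho)$ and study the inverse system $\{\cS^1(\Theta/\varpi^n)\}_n$ through Pa{\v{s}}k\={u}nas's projective envelope machinery from \cite{PaskunasIHES}. Under hypothesis (1) or (2), the block of $\overline{\rho}$ in the category of locally admissible smooth $\F$-representations of $\GL_2(\Q_p)$ with the appropriate central character is well understood: it admits a pro-representable projective envelope $\widetilde{P}$ whose endomorphism ring is identified, via Colmez's functor, with a quotient of the universal deformation ring of $\overline{\rho}$. Since $\Theta/\varpi$ is a finite-length object of this block, one may realise it as a quotient of $\widetilde{P}/\mathfrak{a}$ for some ideal $\mathfrak{a}$, and translate the multiplicity question for each $\sigma$ into a statement about the structure of $\cS^1(\widetilde{P}/\varpi)$ as a module over the mod-$\varpi$ deformation ring, compatible with the $D^{\times}$-action.

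The main obstacle, I expect, is proving that $\cS^1(\widetilde{P}/\varpi)$ is finitely generated over this mod-$\varpi$ deformation ring, and that the induced $D^{\times}$-action factors through a compatible structure making the fibres at closed points finite-dimensional. This is precisely where the hypotheses enter: genericity of $\overline{\rho}$ in case (1) and non-triviality of $\chi_1\chi_2^{-1}|_{I_{\Q_p}}$ in case (2) guarantee a clean description of the block (via the mod-$p$ representation theory of the pro-$p$ Iwahori, or equivalently via $(\varphi,\Gamma)$-modules through Colmez's functor), while $p\ge 5$ should be needed to ensure enough semisimplicity among the relevant Serre weights. Once such finite generation is in place, $m(\sigma)$ becomes the dimension of a fibre of a finitely generated module at the closed point corresponding to $\rho$, hence finite, and summing over the finitely many $\sigma$ concludes. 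Case~(2) will presumably require treating the ordinary and non-ordinary sub-blocks of $\overline{\rho}$ separately, the inertial non-triviality of $\chi_1\chi_2^{-1}$ being exactly what excludes the degenerate blocks whose $\Ext^1$-diagrams would otherwise produce unbounded families of finite-dimensional constituents.
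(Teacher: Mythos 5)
Your proposal has a genuine gap, and it is located precisely where the difficulty of the theorem lies. You reduce the problem, via Pa\v{s}k\={u}nas's criterion from \cite{Paskunas-JL}, to bounding the finite-dimensional irreducible constituents, and then assert that there are only finitely many candidate $\sigma$ because finite-dimensional irreducibles of $D^\times$ with prescribed central character factor through $\cO_D^\times/(1+\varpi_D\cO_D)$. This is false: a finite-dimensional irreducible smooth (let alone locally algebraic) representation of $D^\times$ is only trivial on $1+\varpi_D^n\cO_D$ for \emph{some} $n$, with no a priori bound, so there are infinitely many isomorphism classes with a given central character; adding the algebraic parameter $\Sym^b E^2\otimes\Nrd^a$ makes the family even larger. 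Once the claimed finiteness of the parameter set is removed, the remaining reduction---show $m(\sigma)<\infty$ for each $\sigma$---is vacuous, since admissibility of $\Shat^1(\Pi(\rho))$ already gives this for free; the content of the finite-length statement is exactly that only finitely many $\sigma$ appear, counted with multiplicity, and this is not addressed.

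The paper's actual argument is quite different from your sketch. It first establishes (via the patching isomorphism $L_\infty\cong(\rho^\Box)^*\boxtimes L_\infty'$ of Proposition \ref{L=2L'}) that $\Shat^1(\Pi(\rho))\cong\rho\boxtimes\JL(\rho)$ is $\rho$-typic (Corollary \ref{S(Pi)typic}), reducing to a single $D^\times$-representation $\JL(\rho)$. It then works in the category $\cC$ of \cite{HW-JL2} and computes the multiplicity invariant $\mu(\JL(\rho))=4$ (Lemma \ref{modpresult}, Corollary \ref{mu4}), while the global Taylor--Wiles--Kisin patching of \S\ref{global} is used not to prove finite generation over a deformation ring but to show that $\JL(\rho)^{\onealg}$, the $D^{\times,1}$-locally algebraic vectors, is finite dimensional (Theorem \ref{algfin}). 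The finiteness criterion Theorem \ref{fincri}---which rests on the purely local dichotomy that any infinite-dimensional object of $\cC$ has $\mu\ge 4$ (Theorem \ref{mu>2})---then closes the argument. Your ``projective envelope plan'' gestures toward the deformation-theoretic apparatus, but it aims at finite generation over a local ring, which would at best yield admissibility, not finite length; the multiplicity/grading machinery of $\gr_{\fm_D}(\Lambda)$ is the essential missing ingredient. You should also note that even the infinite-dimensionality statement, while it can be sourced to \cite[Theorem 7.8]{Scholze}, comes for free in the paper from $\mu(\JL(\rho))=4>0$.
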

This theorem generalizes \cite[Theorem 1.1]{HW-JL2} in two directions. Firstly, in \emph{loc.~cit.} it's assumed that $\rho\cong r|_{G_{F_{\p}}}$ for some promodular Galois representation (\cite[Definition 7.3.15]{MR2251474}) $r\colon G_{F}\to \GL_2(E)$, where $F$ is a totally real number field with $F_{\p}\cong\Q_p$ for some finite place $\p$ above $p$. We remove this ``global origin'' condition in Theorem \ref{thm::intro-1}. Secondly, we further consider some non-generic cases, i.e., the case $\overline{\rho}^{\rm ss}\cong\o\oplus1$ in Theorem \ref{thm::intro-1} is new.

We believe that the $p$-adic Scholze's functors should preserve finite length (at least in the case where $n=2$ and $K=\Q_{p}$) and the following theorem gives another evidence.
\begin{theorem}\label{thm::intro-2}
Let $\Pi\cong(\Ind_B^G\d_2\e\otimes\d_1)_{\rm cont}$ with unitary characters $\d_1,\d_2\colon\Q^{\times}_p\to E^{\times}$ such that  $\d_1\d_2^{-1}|_{\Z_p^{\times}}\not\equiv1\pmod\varpi$. Then $\Shat^1(\Pi)$ is  infinite dimensional and is topologically of finite length.
\end{theorem}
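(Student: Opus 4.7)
The plan is to reduce finite length for $\Shat^1(\Pi)$ to an analysis in a single Paskunas block. The hypothesis $\d_1\d_2^{-1}|_{\Z_p^\times}\not\equiv 1\pmod\varpi$ ensures that, for any $G$-invariant open bounded $\cO$-lattice $\Theta\subset\Pi$, each irreducible constituent of $\Theta\otimes_\cO\F$ is an irreducible smooth principal series of $\GL_2(\Q_p)$; all such constituents lie in one and the same block $\fB$ of $\Modladm$, namely the block indexed by the semisimple pair $(\overline{\d}_1,\overline{\d}_2)$. This block coincides with the one appearing in case (2) of Theorem \ref{thm::intro-1}, so the machinery developed there is the correct setting here as well.

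The first step is to describe the Schikhof dual $\Theta^d$ explicitly in Paskunas' pseudocompact category. Since $\Pi$ is a parabolically induced Banach representation, $\Theta^d$ is identified with a finitely generated, in fact essentially cyclic, pseudocompact module over the completed group algebra of a compact open subgroup of $G$. Pushing this through the block decomposition, $\Theta^d$ becomes an explicit finitely generated module over the block deformation ring $R_\fB$, with support controlled by the reducible Galois parameter attached to $(\d_1,\d_2)$. Applying $\Shat^1$ via reduction modulo $\varpi^n$ and passage to the inverse limit, finite length of $\Shat^1(\Pi)$ in $\BanDadm$ should then follow from finite length of the associated module over $R_\fB[1/p]$.

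For the infinite dimensionality assertion I would exhibit at least one infinite-dimensional subquotient directly. An admissible Banach representation of $D^\times$ is finite dimensional if and only if it is a finite direct sum of unitary characters; by computing $\Shat^1$ on the mod-$\varpi$ reduction using the results of \cite{Scholze,Paskunas-JL} on Scholze's functor for mod-$p$ principal series, one obtains non-zero admissible $D^\times$-representations that are not a direct sum of characters, which rules out this possibility after lifting to the $p$-adic setting.

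The principal obstacle is the final Hausdorff torsion-free quotient in the definition of $\Shat^i$, together with the possible topological reducibility of $\Pi$ (when the associated Galois representation splits). Both complications can in principle introduce extensions that destroy finite length, and care must be taken to analyze the $R_\fB$-module structure of $\Theta^d$ and of its $\Shat^1$-image near the maximal ideal corresponding to $(\d_1,\d_2)$, in order to confirm that no infinite-length tail survives in $\Shat^1(\Pi)$.
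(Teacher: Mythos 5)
Your proposal misses the central difficulty that this paper (and its predecessors) is designed to overcome. The obstruction to proving that $\Shat^1(\Pi)$ has finite length is \emph{not} that $\Theta^d$ might fail to be a nice finitely generated module over a block/deformation ring; it is already known (cf.\ the discussion after the statement of Theorem \ref{thm::intro-1}, and \cite[Theorem 7.8]{Scholze}) that $\Shat^1$ of such $\Pi$ is residually of \emph{infinite} length, because $\cS^1$ applied to the mod-$\varpi$ reduction produces an admissible $D^\times$-representation with infinitely many finite-dimensional Jordan--H\"older factors. Pa\v{s}k\={u}nas showed that topological finite length of $\Shat^1(\Pi)$ is equivalent to $\Shat^1(\Pi)$ having only finitely many finite-dimensional irreducible subquotients, so any valid proof must say something quantitative about these finite-dimensional constituents. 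Your step ``finite length of $\Shat^1(\Pi)$ should then follow from finite length of the associated module over $R_\fB[1/p]$'' therefore has no chance of being correct as stated: the $R_\fB[1/p]$-module could be of finite length while the Banach representation is residually of infinite length, which is exactly the situation at hand. There is no mechanism in your argument to see the finite-dimensional $D^\times$-subquotients, and without it the conclusion cannot be reached.

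The paper's actual proof uses two ingredients you have not invoked and which are specifically tailored to this obstruction. First, the multiplicity $\mu$ on the category $\cC$ of admissible mod-$p$ $D^\times$-representations whose associated graded module is annihilated by a power of $J$; finite-dimensional representations have $\mu=0$, and Theorem \ref{mu>2} (from \cite{HW-JL2}) shows that an infinite-dimensional object has $\mu\ge4$. Corollary \ref{mu4} computes $\mu(\cS^1(\Ind_B^G\chi_1\o\otimes\chi_2))=4$ exactly, and Lemma \ref{SinC} transfers this to $\mu(\Shat^1(\Pi))=4$. Second, Theorem \ref{algfin}, proved via the Taylor--Wiles--Kisin patching and the local-global compatibility of Section \ref{global}, shows that the space $\Shat^1(\Pi)^{\onealg}$ of $D^{\times,1}$-locally algebraic vectors is finite dimensional, which is what controls the finite-dimensional constituents. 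These two facts feed into Theorem \ref{fincri} to give both finite length and (via $\mu=4>0$) infinite dimensionality. Your block-deformation-ring approach does not produce either ingredient.

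A secondary problem: your criterion ``an admissible Banach representation of $D^\times$ is finite dimensional if and only if it is a finite direct sum of unitary characters'' is false. Finite-dimensional continuous $D^\times$-representations include twists of $\Sym^b E^2$ by smooth finite-dimensional (e.g.\ Jacquet--Langlands transfers of supercuspidal) representations, and these are not direct sums of characters. The relevant characterization used in the paper (Lemma \ref{findim}, quoting \cite[Proposition 6.13]{Paskunas-JL}) is that $\Pi^{\onealg}$ is a direct sum of representations of the form $\Sym^b E^2\otimes\Nrd^a\otimes V\otimes\eta^i\circ\Nrd$ with $V$ smooth irreducible, and conversely every finite-dimensional subrepresentation of a unitary Banach representation is $D^{\times,1}$-locally algebraic; this is more subtle than what you wrote and is exactly the handle that the patching argument grips.
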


Let us now sketch the proof of Theorem \ref{thm::intro-1} (the proof of Theorem \ref{thm::intro-2} is similar). One of the ingredients is the Taylor--Wiles--Kisin patching method. Denote by $\e$ the $p$-adic cyclotomic character. Fix a continuous character $\psi \colon G_{\Q_p}\to \cO^{\times}$ such that $\psi\equiv\e\det\rho\pmod\varpi$. Let $R_{\overline{\rho}}^{\Box,\psi\e^{-1}}$ be the universal framed deformation ring corresponding to liftings of $\overline{\rho}$ with determinant $\psi\e^{-1}$. And let 
\[
\rho^{\Box}\colon G_{\Q_p}\to\GL_2(R_{\overline{\rho}}^{\Box,\psi\e^{-1}})
\] be a universal lifting of $\overline{\rho}$. Using the modification of the Taylor--Wiles--Kisin patching method in \cite{CEGGPS1}, \cite{CEGGPS2} constructs an $\cO[\GL_2(\Q_p)]$-module with an arithmetic action of the ring $R_{{\overline{\rho}}}^{\Box}[\![x_1,\ldots,x_g]\!]$,
 where $R_{{\overline{\rho}}}^{\Box}$ is the universal framed deformation ring of $\overline{\rho}$. By the same construction carried out in the setting of quaternionic Shimura sets and Shimura curves we obtain the patched modules $M_{\infty}$ and $L_{\infty}$, $L'_{\infty}$ respectively. We also obtain a complete noetherian local ring $R_{\infty}$ faithfully flat over $R_{\overline{\rho}}^{\Box,\psi\e^{-1}}$. The patching module $M_{\infty}$ is an $R_{\infty}[\GL_2(\Q_p)]$-module finitely generated over the completed group algebra $R_{\infty}[\![\GL_2(\Z_p)]\!]$. And $L_{\infty}$ as well as $L'_{\infty}$ are $R_{\infty}[D^{\times}]$-modules finitely generated over $R_{\infty}[\![\cO_D^{\times}]\!]$. If $x\colon  R_{\infty}\to \cO$ is a continuous $\cO$-algebra homomorphism, then
\[
\Pi_{M,x}:=\Hom_{\cO}^{\rm cont}(M_{\infty}\otimes_{R_{\infty},x}\cO,E)\]
and
\[ \Pi_{L,x}:=\Hom_{\cO}^{\rm cont}(L_{\infty}\otimes_{R_{\infty},x}\cO,E),\; \Pi_{L',x}:=\Hom_{\cO}^{\rm cont}(L'_{\infty}\otimes_{R_{\infty},x}\cO,E)\]
are admissible unitary $E$-Banach space representations of $\GL_2(\Q_p)$ and $D^{\times}$ respectively. The composition $y\colon R^{\Box,\psi\e^{-1}}\to  R_{\infty}\xrightarrow{x}E$ defines a continuous Galois representation $r_y \colon G_{\Q_p}\to \GL_2(E)$. Using \cite[Theorem 7.1]{pavskunas2021finiteness} and the technique developed in \cite[\S6.5]{CEGGPS2}, one can prove that $\Pi_{M,x}\cong\Pi(r_y)^{\oplus d}$ for some integer $d\ge1$ (see also \cite[Proposition 3.9]{tung2021automorphy} and the proof of \cite[Corollary 8.16]{DPS-Crelle}). And \cite[Theorem 8.10]{DPS-Crelle} shows that $\Shat^1(\Pi_{M,x})$ is a closed subspace of $\Pi_{L,x}$ with a finite dimensional cokernel.  We also prove
 \begin{proposition}\label{pro::intro-1}
There is an $R_{\infty}[G_{F_{\p}}\times B_{\p}^{\times}]$-equivariant isomorphism 
  \[
 L_{\infty}\cong (\rho^{\Box})^*\boxtimes_{R^{\Box,\psi\e^{-1}}_{\p}}L_{\infty}'.
 \]
 \end{proposition}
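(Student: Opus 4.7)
The plan is to establish the stated isomorphism via an Eichler--Shimura/Carayol-type decomposition at finite level, and then to descend it through the Taylor--Wiles--Kisin patching construction.

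First, I would recall the finite-level setup. The module $L_\infty$ is built by patching the $\fm$-localized completed \'etale cohomology $\widetilde{H}^1_{\et}(X_{U^{\p}},\cO)_{\fm}$ of a tower of Shimura curves attached to a quaternion algebra $B/F$ ramified at $\p$ and at all but one archimedean place, while $L'_\infty$ is patched from the $\fm$-localized completed cohomology $\widetilde{H}^0(Y_{U'^{\p}},\cO)_{\fm}$ of a tower of Shimura sets attached to the definite quaternion algebra $B'/F$ obtained from $B$ by flipping the invariant at the distinguished archimedean place. In particular, $B_\p^{\times}=B'{}_\p^{\times}=D^{\times}$, both towers carry the same Hecke action away from $\p$, and the Taylor--Wiles--Kisin patching of the two sides can be arranged simultaneously over the same auxiliary ring $R_\infty$.

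Second, at each Taylor--Wiles auxiliary level I would invoke Carayol's local-global compatibility at $\p$ for Shimura curves, together with the Jacquet--Langlands correspondence between $B^{\times}$ and $B'{}^{\times}$: the joint action of $G_{F_{\p}}\times D^{\times}$ on $\widetilde{H}^1_{\et}(X,\cO/\varpi^n)_{\fm}$ decomposes as the tensor product of the universal Galois representation (valued in the localized Hecke algebra $\T_{\fm}$) with the finite-level Shimura-set module $\widetilde{H}^0(Y,\cO/\varpi^n)_{\fm}$. This yields, for each $n$, a $G_{F_\p}\times D^{\times}$-equivariant isomorphism
\[
\widetilde{H}^1_{\et}(X,\cO/\varpi^n)_{\fm}\;\cong\; (\rho^{\mathrm{univ}})^{*}\otimes_{R^{\Box,\psi\e^{-1}}_{\p}}\widetilde{H}^0(Y,\cO/\varpi^n)_{\fm},
\]
compatibly with the Taylor--Wiles data.

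Third, I would descend this identification to the patched limit. By construction of the Taylor--Wiles--Kisin patching, the $G_{F_\p}$-action on $L_\infty$ is induced by the universal framed lifting $\rho^{\Box}$ through the structural map $R^{\Box,\psi\e^{-1}}_{\p}\to R_\infty$, and $R_\infty$ is faithfully flat over $R^{\Box,\psi\e^{-1}}_{\p}$. Since the finite-level decomposition is natural in the Taylor--Wiles data, passing to the patched limit yields the desired $R_\infty[G_{F_\p}\times B_\p^{\times}]$-equivariant isomorphism.

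The main obstacle will be to upgrade the finite-level Carayol decomposition into a genuine tensor-product identity rather than a mere identification of Hecke eigencomponents. This requires a flatness (or, after sufficient localization, freeness) input for $\widetilde{H}^0(Y,\cO/\varpi^n)_{\fm}$ as a module over the relevant local component of the Hecke algebra, mirroring the patched multiplicity-one results on the $\GL_2$-side in \cite{CEGGPS2}. One expects this to follow from the Shimura-curve/Shimura-set comparison carried out in \cite{HW-JL2}, together with careful bookkeeping of the framed local deformation ring at $\p$; the point here is that, in the absence of a global promodularity hypothesis on $\rho$, the matching must be arranged purely in the local patched setting rather than via a global Galois representation.
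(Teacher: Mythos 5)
There is a genuine gap: you have misidentified what $L'_\infty$ is, and consequently the finite-level decomposition you propose is not the one needed (and does not even make group-theoretic sense). In the paper, $L(N)$ and $L'(N)$ are \emph{both} built from the completed cohomology $\widehat{H}^1_{\psi,\lambda^{\p}}(U(N)^{\p},\cO)_{\fm_{Q_N}}$ of the same tower of Shimura curves for the indefinite algebra $B$ (ramified at $\p$); the module $L'(N)$ is obtained by taking the $\T_{\fm_{Q_N}}[G_{F,S_{Q_N}}]$-linear Hom from the Galois representation $r_{\fm_{Q_N}}$ into that cohomology, not from the Shimura-set cohomology. The Shimura-set module $S_{\psi,\lambda^{\p}}(U(N)^{\p},\cO)_{\fm_{Q_N}}$ patches to $M_\infty$, not to $L'_\infty$. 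Moreover, the definite algebra $B'$ here is split at $\p$, so $B'_\p \cong \GL_2(\Q_p)$ while $B_\p \cong D$; the Carayol-type decomposition you write, $\widehat{H}^1_{\et}(X,\cdot)_{\fm}\cong(\rho^{\rm univ})^*\otimes\widehat{H}^0(Y,\cdot)_{\fm}$, would therefore have $D^{\times}$ acting on the left but $\GL_2(\Q_p)$ acting on the right, and cannot be $B_\p^{\times}$-equivariant as stated. The relation between $M_\infty$ and $L_\infty$ does exist, but it goes through Scholze's functor ($\Shat^1(M_\infty)\cong L_\infty$, Theorem~\ref{S(M)=L}) and is a separate, much deeper result than the proposition at hand.

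The actual mechanism in the paper is considerably more elementary and does not need any flatness or multiplicity-one input, so your concern in the last paragraph is a red herring. The key fact is that $\widehat{H}^1_{\psi,\lambda^{\p}}(U(N)^{\p},\cO)_{\fm_{Q_N}}$ is $r_{\fm_{Q_N}}$-\emph{typic} in Scholze's sense (his Definition 5.2 and Proposition 5.8): it is canonically isomorphic to $r_{\fm_{Q_N}}\otimes\widehat{H}^1[r_{\fm_{Q_N}}]$ as a $\T_{\fm_{Q_N}}[G_{F,S_{Q_N}}]$-module. Since both $r_{\fm_{Q_N}}$ and the universal deformation $r_{Q_N}$ are free of rank two and the former is a specialization of the latter, the completed cohomology is also $r_{Q_N}$-typic; dualizing and extending scalars to the framed ring, one identifies $V^{\Box}_{\p}\otimes_{R^{\Box,\psi\e^{-1}}_{\p}}R^{\Box,\psi\e^{-1}}_{F,Q_N}$ with $V^{\Box}_{Q_N}$ via the canonical framing basis at $\p$, producing $L(N)\cong(\rho^{\Box}_{\p})^*\otimes_{R^{\Box,\psi\e^{-1}}_{\p}}L'(N)$. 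Passing to the patched limit is then immediate because $(\rho^{\Box}_{\p})^*$ is finite free of rank two, so tensoring with it commutes with the products, localizations at the ultrafilter, and inverse limits in the patching construction. Your overall outline (finite-level statement, then descend through patching) is the right shape, but the finite-level input you invoke is the wrong one.
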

As a corollary, we have
\begin{corollary}
Let $\rho\colon G_{\Q_p}\to\GL_2(E)$ be a continuous absolutely irreducible representation. Then $\Shat^1(\Pi(\rho))$ is $\rho$-typic, i.e., there exists a unitary Banach representation ${\rm JL}(\rho)$ of $D^{\times}$ and a $G_{\Q_p}\times D^{\times}$-equivariant isomorphism
\[
\Shat^1(\Pi(\rho))\cong \rho\boxtimes {\rm JL}(\rho).
\]
\end{corollary}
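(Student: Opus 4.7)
The plan is to deduce the corollary from Proposition \ref{pro::intro-1} by specializing at an $\cO$-point $x \colon R_{\infty} \to \cO$ whose associated Galois representation $r_y$ is isomorphic to $\rho$. Setting $\psi := \e\det\rho$ ensures that the composition $y \colon R^{\Box,\psi\e^{-1}} \to R_{\infty} \xrightarrow{x} E$ lands in the correct framed deformation ring, and because $R_{\infty}$ is faithfully flat over $R^{\Box,\psi\e^{-1}}$ such a point $x$ exists (after possibly enlarging $E$).

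Applying this specialization to Proposition \ref{pro::intro-1}, the base change $L_{\infty}\otimes_{R_{\infty},x}\cO$ becomes isomorphic to $\rho^{*}\otimes_{\cO}(L'_{\infty}\otimes_{R_{\infty},x}\cO)$ as a $G_{\Q_p}\times D^{\times}$-module. Taking continuous $\cO$-linear duals and inverting $\varpi$ then yields a topological $G_{\Q_p}\times D^{\times}$-equivariant isomorphism of admissible unitary $E$-Banach space representations
\[
\Pi_{L,x} \;\cong\; \rho \boxtimes \Pi_{L',x}.
\]
On the $\GL_2(\Q_p)$-side, the arithmetic patching machinery together with \cite[Theorem 7.1]{pavskunas2021finiteness} recalled in the introduction gives $\Pi_{M,x}\cong\Pi(\rho)^{\oplus d}$ for some $d\ge1$. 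Combining this with \cite[Theorem 8.10]{DPS-Crelle}, one obtains a closed $G_{\Q_p}\times D^{\times}$-equivariant embedding of $\Shat^{1}(\Pi(\rho))^{\oplus d}\cong\Shat^{1}(\Pi_{M,x})$ into $\Pi_{L,x}\cong\rho\boxtimes\Pi_{L',x}$ with finite-dimensional cokernel.

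To conclude I define $\mathrm{JL}(\rho) := \Hom_{G_{\Q_p}}^{\rm cont}(\rho,\Shat^{1}(\Pi(\rho)))$, equipped with its natural $D^{\times}$-action and Banach topology, so that there is a tautological $G_{\Q_p}\times D^{\times}$-equivariant map $\rho \boxtimes \mathrm{JL}(\rho) \to \Shat^{1}(\Pi(\rho))$. To show this map is an isomorphism it suffices to prove that $\Shat^{1}(\Pi(\rho))$ is $\rho$-typic, i.e.\ that every irreducible $G_{\Q_p}$-subquotient of it is isomorphic to $\rho$. This in turn follows from the embedding produced above: as a $G_{\Q_p}$-representation, $\rho\boxtimes\Pi_{L',x}$ is a (possibly infinite) direct sum of copies of the absolutely irreducible $\rho$, so the same holds for every closed $G_{\Q_p}$-subrepresentation, hence in particular for $\Shat^{1}(\Pi(\rho))^{\oplus d}$ and therefore for $\Shat^{1}(\Pi(\rho))$.

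The main delicate points will be verifying that the specialization of the ``boxed tensor product'' $(\rho^{\Box})^{*}\boxtimes_{R^{\Box,\psi\e^{-1}}_{\p}}L'_{\infty}$ interacts correctly with continuous duality to produce the asserted Banach-space decomposition $\Pi_{L,x}\cong \rho\boxtimes\Pi_{L',x}$, and that the extraction of $\mathrm{JL}(\rho)$ via $\Hom_{G_{\Q_p}}^{\rm cont}(\rho,-)$ genuinely recovers all of $\Shat^{1}(\Pi(\rho))$, rather than just its $\rho$-isotypic part, at the topological level; both reduce, thanks to the absolute irreducibility of $\rho$ and a Schur-type argument, to the verification that all $G_{\Q_p}$-constituents on the ambient side are isomorphic to $\rho$.
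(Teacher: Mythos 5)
Your proof follows essentially the same route as the paper's: specialize Proposition~\ref{pro::intro-1} at an $R_\infty$-point $x$ lying over $\rho$ (which exists by faithful flatness), obtain $\Pi_{L,x}\cong\rho\boxtimes\Pi_{L',x}$, and deduce $\rho$-typicality from the embedding $\Shat^1(\Pi(\rho))\hookrightarrow\Pi_{L,x}$. The only differences are cosmetic: you spell out the concluding Schur-type argument (defining $\JL(\rho)=\Hom^{\rm cont}_{G_{\Q_p}}(\rho,-)$ and checking the evaluation map is an isomorphism) which the paper delegates to \cite[Proposition 5.4]{Scholze}, and you phrase the determinant normalization a bit loosely (``setting $\psi:=\e\det\rho$'') where the paper instead keeps the globally chosen $\psi$ and twists $\rho$, invoking the compatibility of $\Shat^i$ and the $p$-adic correspondence with twists (\cite[Lemma 7.4]{DPS-Crelle}).
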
\label{cor::intro-1}
Let $D^{\times,1}$ be the subgroup of $D^{\times}$ of elements with reduced norm equal to $1$. Let $\check{\mathcal{S}}^1(\Pi(\rho))^{\onealg}$ be the subspace of $\check{\mathcal{S}}^1(\Pi(\rho))$ consisting of  locally algebraic vectors for the $D^{\times,1}$-action. Using the local-global compatibility and  \cite[Proposition 6.13]{Paskunas-JL}, we have the following result, which improves \cite[Proposition 6.15]{Paskunas-JL}.
\begin{theorem}\label{thm::intro-3}
Let $\rho\colon G_{\mathbb{Q}_p}\to \GL_2(E)$ be an absolutely irreducible continuous representation.  Then $\check{\mathcal{S}}^1(\Pi(\rho))^{\onealg}$ is finite dimensional. 
\end{theorem}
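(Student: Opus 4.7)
The plan is to reduce the statement to a question about the quaternionic factor $\JL(\rho)$ via the $\rho$-typic decomposition of Corollary \ref{cor::intro-1}, and then combine the local analysis of locally algebraic vectors (\cite[Proposition 6.13]{Paskunas-JL}) with a local--global compatibility argument using the patched module $L'_{\infty}$ of Proposition \ref{pro::intro-1}.

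First, since $D^{\times,1}\subset D^{\times}$ acts trivially on the Galois factor, the $G_{\Q_p}\times D^{\times}$-equivariant isomorphism $\Shat^1(\Pi(\rho))\cong\rho\boxtimes\JL(\rho)$ of Corollary \ref{cor::intro-1}, combined with the fact that $\rho$ is finite-dimensional, yields
\[
\Shat^1(\Pi(\rho))^{\onealg}\cong\rho\otimes_E\JL(\rho)^{\onealg}.
\]
Hence it suffices to prove $\dim_E\JL(\rho)^{\onealg}<\infty$. By \cite[Proposition 6.13]{Paskunas-JL}, any irreducible $D^{\times,1}$-locally algebraic constituent of $\JL(\rho)$ has the form $W\otimes\sigma$ with $W$ an irreducible algebraic representation of $D^{\times,1}$ and $\sigma$ a smooth irreducible representation of $D^{\times}$, and its occurrence forces $\rho$ to be potentially semi-stable with Hodge--Tate weights and inertial type determined by $(W,\sigma)$. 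Only finitely many pairs $(W,\sigma)$ can therefore contribute.

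For each such admissible pair $(W,\sigma)$, I would bound the multiplicity space $\Hom_{D^{\times}}(W\otimes\sigma,\JL(\rho))$ via patching. By Proposition \ref{pro::intro-1} one has $L_{\infty}\cong(\rho^{\Box})^{*}\boxtimes_{R^{\Box,\psi\e^{-1}}}L'_{\infty}$, which allows the multiplicity space at the closed point $x\colon R_{\infty}\to\cO$ cut out by $\rho$ to be identified with a fiber of a suitable $W$-coinvariant quotient of (a dual of) $L'_{\infty}$. Since $L'_{\infty}$ is finitely generated over $R_{\infty}[\![\cO_D^{\times}]\!]$, this quotient is finitely generated over $R_{\infty}$; the standard local--global compatibility argument in the patching setup (as in \cite[\S6.5]{CEGGPS2} and the proof of \cite[Corollary 8.16]{DPS-Crelle}, applied now on the quaternionic side using classical Jacquet--Langlands) identifies its generic rank over the relevant potentially semi-stable deformation ring, whence specializing at $x$ produces a finite-dimensional $E$-vector space.

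The main obstacle is precisely what distinguishes this from \cite[Proposition 6.15]{Paskunas-JL}: no de Rham or Hodge--Tate--Sen regularity hypothesis is imposed on $\rho$. In \emph{loc.~cit.} such a hypothesis was used to directly compute locally algebraic vectors on the $\GL_2(\Q_p)$-side and transport the finiteness through Scholze's functor. Here the new input is Corollary \ref{cor::intro-1}: only once $\Shat^1(\Pi(\rho))$ is known to be $\rho$-typic can one cleanly separate the Galois factor from the quaternionic factor and apply the purely local constraint \cite[Proposition 6.13]{Paskunas-JL} to $\JL(\rho)$ alone, bypassing any regularity assumption on $\rho$.
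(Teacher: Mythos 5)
Your high-level structure is right, but there is a genuine mismatch between what you attribute to each tool and what those tools actually deliver, which leaves a gap at the crucial step.

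The claim you need — that if some $W\otimes\sigma$ with $W=\Sym^b E^2\otimes\Nrd^a$ algebraic and $\sigma$ smooth irreducible occurs in $\Shat^1(\Pi(\rho))$, then $\rho$ must be potentially semi-stable with Hodge--Tate weights and inertial type prescribed by $(W,\sigma)$ — does \emph{not} follow from \cite[Proposition 6.13]{Paskunas-JL}. That proposition (Lemma~\ref{findim} in the paper) is a purely structural statement: it says the $D^{\times,1}$-locally algebraic vectors form a direct sum of finite-dimensional irreducibles of the indicated shape. It puts no constraint on \emph{which} $(W,\sigma)$ can appear for a given $\rho$. That constraint is exactly the content of Corollary~\ref{O_Dalg}, which is proved by patching (Proposition~\ref{L_infty twist}, Theorem~\ref{S(M)=L}, and the $\delta$-functoriality of~$\Shat^i$). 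In your write-up you assert the constraint alongside the citation of \cite[Proposition 6.13]{Paskunas-JL}, but you never actually derive it; the patching paragraph you then supply is aimed at a different target.

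That target — bounding the multiplicity space $\Hom_{D^\times}(W\otimes\sigma,\JL(\rho))$ via $L'_\infty$ — is unnecessary: $\Shat^1(\Pi(\rho))$ is an \emph{admissible} Banach representation of~$D^\times$, so each finite-dimensional irreducible occurs with finite multiplicity automatically. The paper's proof of Theorem~\ref{algfin} uses exactly this: admissibility plus Lemma~\ref{findim} reduce the problem to showing that only finitely many isomorphism classes of irreducible finite-dimensional subrepresentations occur, and that finiteness is then enforced by Corollary~\ref{O_Dalg} (the $a$, $b$, and the smooth piece are pinned down by $\rho$, with at most two choices of the smooth irreducible $V$ lying above a given $\cO_D^\times$-type). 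Note also that the paper's proof does not pass through the $\rho$-typic decomposition of Corollary~\ref{cor::intro-1} at all — that decomposition is used for Theorem~\ref{mult}, not for Theorem~\ref{algfin}. Your reduction to $\JL(\rho)^{\onealg}$ is valid but optional; the real new input, which you should cite and prove in place of the misattributed claim, is Corollary~\ref{O_Dalg} via Proposition~\ref{L_infty twist}.
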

\begin{remark}
Pa{\v{s}}k\={u}nas in \cite[\S 1.2]{Paskunas-JL} sketches a proof of Theorem \ref{thm::intro-3} which uses the patching method to reduce to the case of \cite[Proposition 6.15]{Paskunas-JL}. However our proof of Theorem \ref{thm::intro-2} is different from that in \cite[\S 1.2]{Paskunas-JL} since we do not use \cite[Proposition 6.15]{Paskunas-JL}.
\end{remark}
 Another ingredient is the finiteness criterion established in \cite{HW-JL2}, see Theorem \ref{fincri} below. We will apply this finiteness criterion to ${\rm JL}(\rho)$ and then finish the proof of Theorem \ref{thm::intro-1}.

Let us now describe how this article is organized. In \S\ref{p-adic langlands} we recall the $p$-adic local Langlands correspondence for $\GL_2(\Q_p)$. In \S\ref{Scholze} we recall some vanishing results of Scholze's functor. In \S\ref{global} we use the Taylor--Wiles--Kisin patching method to prove Theorem \ref{thm::intro-3} and Proposition \ref{pro::intro-1}. In \S\ref{local} we review the finiteness criterion of \cite{HW-JL2}. In \S\ref{main} we prove Theorem \ref{thm::intro-1} and Theorem \ref{thm::intro-2}.

\subsection{Notation}
We fix a prime number $p\geq 5$. Let $E$ be a finite extension of $\Q_p$, with ring of integers $\cO$ and residue field $\F$. Fix a uniformizer $\varpi$ of $E$. We will assume that $E$ and $\F$ are sufficiently large.

If $F$ is a field, let $G_F:=\Gal(\overline{F}/F)$ denote its absolute Galois group. Let $\varepsilon$ denote the $p$-adic cyclotomic character of $G_F$, and $\omega$ the mod $p$ cyclotomic character. 
 
Let $V/E$ be a potentially semi-stable representation of $G_{\Q_p}$. The Hodge--Tate weight of $V$ is the multiset in which $i$ appears with multiplicity $\dim_{E} (V\otimes_{\Q_p} \C_p(i))^{G_{\Q_p}}$. For example $\HT(\e)=\{-1\}$.

\subsection{Acknowledgements}
We thank Yongquan Hu for helpful discussions during the preparation of the paper, and for his comments and suggests on an earlier draft. We thank Vytautas Pa{\v{s}}k{\={u}}nas for
answering our questions. This work is supported by National Key R\&D Program of China 2023YFA1009702 and
National Natural Science Foundation of China Grants 12371011.

\section{The $p$-adic local Langlands correspondence for $\GL_2(\Q_p)$}\label{p-adic langlands}
Let $G=\GL_2(\Q_p)$ and let $Z$ be the center of $G$. Let $K=\GL_2(\Z_p)$. We say a $G$-representation $M$ over $\cO$ is \textit{smooth} if $M=\cup_{H,i}M^{H}[\varpi^i]$, where $H$ runs through all the open subgroups of $G$. For a fixed  character $\zeta\colon Z\to\cO^{\times}$, We denote by $\Modsm$ the category of smooth $G$-representations $M$ over $\cO$ with central character $\zeta$, i.e. $Z$ acts on $M$ by $\zeta$. According to \cite[Theorem 34]{BL} and \cite[Th\'eor\`eme 1.1]{BreuilCompositio}, the absolutely irreducible objects in $\Modsm$ fall into four disjoint classes:
\hfill\begin{enumerate}
\item $1$-dimensional representations $\chi\circ\det$ with $\chi\colon\Q_p^{\times}\to\F^{\times}$ a continuous character;
\item principal series representations $\Ind_B^G(\chi_1\otimes\chi_2)$ with $\chi_i\colon\Q_p^{\times}\to\F^{\times}$ ($i=1,2$) continuous characters and $\chi_1\neq\chi_2$;
\item special series $\Sp\otimes\chi\circ\det$, where $\Sp$ is the Steinberg representation fitting into the (non-split) exact sequence $0\to\mathbf{1}\to\Ind_B^G\mathbf{1}\to\Sp\to0$;
\item supersingular representations $\pi(r,0,\chi)=(\chi\circ\det)\otimes(\cInd_{KZ}^G\Sym^r \F^2/T)$, where $0\le r\le p-1$ and $T$ is the generator of the algebra $\End_{G}(\cInd_{KZ}^G\Sym^r \F^2)$ defined in \cite[Proposition 8]{BL}.
\end{enumerate}

\subsection{Extensions and blocks}\label{block}
Let $\Irr_{G,\zeta}(\F)$ be the set of equivalence classes of smooth irreducible $\F$-representations of $G$ with central character $\zeta$. For $\pi,\pi'\in\Irr_{G,\zeta}(\F)$, We say $\pi\leftrightarrow\pi'$ if $\pi\cong\pi'$ or $\Ext^1_{G,\zeta}(\pi,\pi')\neq0$, or $\Ext^1_{G,\zeta}(\pi',\pi)\neq0$. We say $\pi$ and $\pi'$ are in the same block if there exist $\pi_1,\ldots,\pi_k\in\Irr_{G,\zeta}(\F)$ such that $\pi\leftrightarrow\pi_1$, $\pi_1\leftrightarrow\pi_2$, $\ldots$ , $\pi_k\leftrightarrow\pi'$. Clearly lying in the same block defines an equivalence relation on $\Irr_{G,\zeta}(\F)$. 

Let $\pi\in\Irr_{G,\zeta}(\F)$ be absolute irreducible. When $p\ge5$, following \cite[VII]{Co}, \cite[\S4]{EmertonOrd2}  and \cite{Paskunas-extension}, we have the following description of all the possible blocks $\mathfrak{B}$ containing $\pi$ (cf. \cite[Proposition 5.42]{PaskunasIHES}):
\hfill\begin{enumerate}
\item[(I)] If $\pi$ is supersingular, then $\mathfrak{B}=\{\pi\}$ and $\dim_{\F}\Ext^1_{G,\zeta}(\pi,\pi)=3$ (\cite[Theorem 10.13]{Paskunas-extension});
\item[(II)] If $\pi\cong \Ind_B^G\chi_1\otimes\chi_2\omega^{-1}$ with $\chi_1\chi_2^{-1}\neq1, \omega^{\pm1}$ and $\chi_1\chi_2=\z\o$ then
\begin{equation*}
\mathfrak{B}= \{\Ind_B^G\chi_1\otimes\chi_2\omega^{-1},\Ind_B^G\chi_2\otimes\chi_1\omega^{-1}\}
\end{equation*}
and we summarize the dimension of $\Ext^1_{G,\zeta}(\pi',\pi)$ in the following table (\cite[Theorem 11.5]{Paskunas-extension}):
\begin{table}[h]
\begin{tabular}{C|CC}
\pi'\backslash\pi                                                                                                                                   & \Ind_B^G\chi_1\otimes\chi_2\omega^{-1} & \Ind_B^G\chi_2\otimes\chi_1\omega^{-1}  \\
\hline
\Ind_B^G\chi_1\otimes\chi_2\omega^{-1} & 2                                                                                                                                                        & 1                                                                                                                                                                                                                                                         \\
\Ind_B^G\chi_2\otimes\chi_1\omega^{-1} & 1                                                                                                                                                        & 2                                                                                                                                                                                                                                                                                                                                                                                                                                             
\end{tabular}
\end{table}
\item[(III)] If $\pi=\Ind_B^G\chi\otimes\chi\omega^{-1}$ with $\chi^2=\z\o$ then $\mathfrak{B}=\{\pi\}$ and $\dim_{\F}\Ext^1_{G,\zeta}(\pi,\pi)=2$ (\cite[Theorem 11.5]{Paskunas-extension});
\item[(IV)] Otherwise, $\mathfrak{B}=\{\chi\circ\det,\chi\circ\det\otimes \Sp,\chi\circ\det\otimes\pi_{\a}\}$ with $\pi_{\a}=\Ind_B^G\omega\otimes\omega^{-1}$ and $\chi^2=\z$. We have the following table for $\dim_{\F}\Ext^1_{G,\z}(\pi',\pi)$ (\cite[Theorem 11.4]{Paskunas-extension}):
\begin{table}[h]
\begin{tabular}{C|CCC}
\pi'\backslash\pi & \mathbf{1} & \Sp & \pi_{\alpha} \\
\hline
\mathbf{1}                                                              & 0 & 2                  & 0                                             \\
\Sp                                             & 1 & 0                  & 1                                             \\  
 \pi_{\alpha}                  & 1 & 0                  & 2                                            
\end{tabular}
\end{table}
\end{enumerate}

\subsection{Colmez's Montr\'eal functor}
Let $\ModGfl$ be the category of continuous finite-length $G_{\Q_p}$-representations on $\cO$-torsion modules and let $\Modfl$ be the full subcategory of $\Modsm$ consisting of finite-length objects. Colmez \cite{Co} has defined a covariant exact functor $\V\colon \Modfl\to\ModGfl$.  If $\chi\colon \Q^{\times}_{p}\to \cO^{\times}$ is a unitary continuous character, then $\V(\pi\otimes \chi\circ\det)=\V(\pi)\otimes\chi$, where we also view $\chi$ as a continuous character of $G_{\Q_p}$ via the local class field theory. We list the values of $\V$ on the absolutely irreducible $G$-representations over $\F$:
\hfill\begin{enumerate}
\item $\V(\chi\circ\det)=0$,
\item $\V(\Sp\otimes\chi\circ\det)=\omega\chi$,
\item $\V(\Ind_{B}^{G}\chi_1\otimes\chi_2\omega^{-1})=\chi_2$,
\item $\V(\pi(r,0,\chi))=(\Ind_{G_{\Q_{p^2}}}^{G_{\Q_p}}\omega_2^{r+1})\otimes\chi\g_{\sqrt{-1}}$.
\end{enumerate}
Here $\o_2$ is Serre's fundamental character of level $2$ given by (\ref{w_2}) and $\g_{\sqrt{-1}}$ is the unramified character of $G_{\Q_{p}}$ sending the arithmetic Frobenius to $\sqrt{-1}$.

Let $A$ be a complete noetherian local $\cO$-algebra. By a profinite augmented representation of $G$ over $A$, we mean a profinite $A$-module with an $A$-linear $G$-action and a jointly continuous $A[\![H]\!]$-action for some (equivalently any) compact open subgroup $H$ such that the two actions is compatible with the inclusion $A[H]\subset A[G]$. We write $\Mod^{\rm pro\,aug}_G(A)$ for the category of profinite augmented $A$-representation of $G$, with morphisms being continuous $A[G]$-linear maps. The Pontrjagin dual $M\mapsto M^{\vee}:=\Hom^{\rm cont}_{\cO}(M,E/\cO)$ induces an anti-equivalence of categories
\begin{equation}\label{pondual}
\Mod^{\rm sm}_G(A)\xrightarrow{\rm anti\,\sim}\Mod_G^{\rm pro\, aug}(A).
\end{equation}
We say that $\tau\in \Modsm$ is locally finite if for all $v \in \tau$, the $\cO[G]$-submodule generated by $v$ is of finite length. We write $\Modlfin$ for the full subcategory of $\Modsm$ consisting of locally finite objects. We similarly define the category $\Modladm$ of locally admissible representations. It follows from \cite[Theorem 2.3.8]{EmertonOrd1} that $\Modladm=\Modlfin$. We let $\CO$ be the full subcategory of $\Modproaug$ which is anti-equivalent to $\Modladm$ under (\ref{pondual}). Following \cite[\S 2]{Paskunas-JL}, we define an exact covariant functor $\checkV\colon\CO\to\Mod^{\rm pro}_{G_{\Q_p}}(\cO)$ as follows. If $M\in\CO$ is of finite length, then we define $\checkV(M)=\V(M^{\vee})^{\vee}\otimes\z$. For general $M\in\CO$, write $M=\varprojlim M_i$ with $M_i$ of finite length, we define $\checkV(M)=\varprojlim\checkV(M_i)$. In this normalization, we have
\hfill\begin{itemize}
\item $\checkV((\chi\circ\det)^{\vee})=0$,
\item $\checkV((\Sp\otimes\chi\circ\det)^{\vee})=\chi\o^{-1}$,
\item $\checkV((\Ind_{B}^{G}\chi_1\otimes\chi_2\omega^{-1})^{\vee})=\chi_1\o^{-1}$,
\item $\checkV(\pi(r,0,\chi)^{\vee})\cong\V(\pi(r,0,\chi))\otimes\o^{-1}$.
\end{itemize}

Let $\Modfg$ be the category of profinite augmented representations of $G$ over $\cO$ with a central character $\z^{-1}$ whose underlying module is finitely generated over $\cO[\![H]\!]$ for some (equivalently any) compact open subgroup $H$. Let $\Pi$ be an admissible unitary Banach representation of $G$ over $E$ with central character $\z$. Let $\Theta\subset\Pi$ be a $G$-invariant open bounded $\cO$-lattice. Then $\Theta^{d}:=\Hom^{\rm cont}_{\cO}(\Theta,\cO)$ is an object in $\CO$. We define $\checkV(\Pi):=\checkV(\Theta^d)\otimes_{\cO} E$ so that $\checkV$ is an exact contravariant functor on $\Banadm$. 

We say a unitary irreducible admissible $E$-Banach space representation of $G$ is \textit{ordinary} if it's a subquotient of a unitary parabolic induction of a unitary character. We say $\Pi$ is \textit{non-ordinary} if it is not ordinary. The following theorem (see \cite[Theorem 11.4]{PaskunasIHES} and \cite[Theorem 1.1]{MR3272011}) is celebrated as the $p$-adic local Langlands correspondence.
\begin{theorem}
The functor $\checkV$ induces a bijection between isomorphism classes of
\hfill\begin{enumerate}
\item absolutely irreducible admissible unitary non-ordinary $E$-Banach space representations of $G$ with the central character $\zeta$, and
\item absolutely irreducible $2$-dimensional continuous $E$-representations of $G_{\Q_p}$
with determinant equal to $\z\e^{-1}$.
\end{enumerate}
\end{theorem}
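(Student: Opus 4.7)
The plan is to follow the block-theoretic approach of Pa\v{s}k\={u}nas \cite{PaskunasIHES}, complemented by the treatment of reducible but non-ordinary Banach representations in \cite{MR3272011}. By Section \ref{block} the category $\CO$ decomposes as a product of block subcategories $\CO_{\fB}$ indexed by the blocks $\fB$ of $\Irr_{G,\zeta}(\F)$, and the functor $\checkV$ respects this decomposition. It therefore suffices to prove the correspondence one block at a time, after which compatibility with the decomposition recovers the full bijection.

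For each non-ordinary block $\fB$ (types (I), (III), and (II) with $\chi_1\chi_2^{-1}\ne\o^{\pm 1}$), I would fix an irreducible $\pi\in\fB$ and form a projective envelope $\wt P\in\CO_{\fB}$ of the Pontryagin dual $\pi^{\vee}$. The first key step is to compute the endomorphism ring $R_{\fB}:=\End_{\CO}(\wt P)$; combining the $\Ext^1_{G,\z}$-tables recalled in Section \ref{block} with obstruction theory shows that $R_{\fB}$ is a complete local noetherian $\cO$-algebra of the expected Krull dimension. The second, crucial step is to prove that $\checkV(\wt P)$ is a \emph{universal} deformation of $\brho:=\checkV(\pi^{\vee})$ with determinant $\z\e^{-1}$; this yields a canonical isomorphism $R_{\fB}\cong R_{\brho}^{\z\e^{-1}}$ with the universal deformation ring on the Galois side. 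From this the bijection formally follows: absolutely irreducible $\Pi\in\Banadm$ lying in $\fB$ correspond to the continuous $\cO$-algebra maps $x\colon R_{\fB}\to E$ for which $\wt P\otimes_{R_{\fB},x}\cO$ is dual to an open bounded lattice in $\Pi$, and on the Galois side these are precisely the absolutely irreducible $2$-dimensional continuous $E$-representations of $G_{\Q_p}$ lifting $\brho$ with the prescribed determinant.

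Block (IV) demands extra care because it contains both ordinary Banach representations (one-dimensional characters and unitary principal series) and non-ordinary ones (e.g.\ certain non-split extensions of twists of the Steinberg by characters). One cannot naively identify $R_{\fB}$ with the entire universal deformation ring; instead one must excise the ordinary locus. Following \cite{MR3272011}, I would construct the ordinary unitary Banach representations explicitly via unitary parabolic induction, separate their contribution inside $\wt P$, and verify that what remains corresponds bijectively to the absolutely irreducible non-ordinary $2$-dimensional deformations of $\brho$ with determinant $\z\e^{-1}$; combined with the treatment of non-split reducible Galois representations in that paper, this completes the block.

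The main obstacle is establishing the universality of $\checkV(\wt P)$. This requires (i) matching $\dim_{\F}\Ext^1_{G,\z}(\pi,\pi')$ on the automorphic side against $\dim_{\F}H^1(G_{\Q_p},\ad\brho)$ with the constrained determinant on the Galois side, and (ii) controlling higher obstructions so as to rule out unexpected relations in $R_{\fB}$. In block (I) both rings are formally smooth of dimension three, so the identification is comparatively transparent once the tangent spaces are matched; in the other blocks one relies heavily on Colmez's explicit description of $\V$ via $(\ph,\Gamma)$-modules and on a careful analysis of $\Ext^2$-groups on both sides to upgrade the surjection $R_{\brho}^{\z\e^{-1}}\twoheadrightarrow R_{\fB}$ coming from universality to an isomorphism.
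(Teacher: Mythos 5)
The paper does not prove this theorem at all; it simply records it as a known result, citing Pa\v{s}k\={u}nas's \cite[Theorem 11.4]{PaskunasIHES} for the residually irreducible and generic reducible blocks and \cite[Theorem 1.1]{MR3272011} for the remaining (in particular the non-generic reducible and block-(IV)) cases. Your sketch faithfully reproduces the strategy of those references --- block decomposition of $\CO$, projective envelopes $\wt P$, identification of $\End_{\CO}(\wt P)$ with a universal deformation ring via $\checkV(\wt P)$, and the necessity of excising the ordinary locus in block (IV) --- so it is essentially the same approach as the one the paper is implicitly invoking, just spelled out rather than cited.
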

Let $\rho$ be an absolutely irreducible $2$-dimensional continuous $E$-representations of $G_{\Q_p}$. We write $\Pi(\rho)$ for the corresponding absolutely irreducible admissible unitary non-ordinary $E$-Banach space representations of $G$ such that $\checkV(\Pi(\rho))\cong\rho$.

\subsection{The modulo $p$ correspondence}\label{modpcor}
Let $\o_2\colon I_{\Q_p}\to\F^{\times}$ be the fundamental character of level $2$ defined in \cite[\S 1.7]{serre1972proprietes} given by 
\begin{equation}\label{w_2}
\omega_2(g)=\frac{g((-p)^{\frac{1}{p^2-1}})}{(-p)^{\frac{1}{p^2-1}}},\; \forall g\in I_{\Q_p}.
\end{equation}
This definition does not depend on the choice of $(-p)^{\frac{1}{p^2-1}}$ and shows that $\o_2$ extends to a character $G_{\Q_{p^2}}\to \F^{\times}$. We say $h\in\Z/ (p^2-1)\Z $ is \textit{primitive} if $ph\not\equiv h\pmod {p^2-1}$. If $h$ is primitive, then $\Ind^{G_{\Q_p}}_{G_{\Q_{p^2}}}\o^h_2$ is irreducible. Conversely, every absolutely irreducible $2$-dimensional $\F$-linear representation of $G_{\Q_p}$ is isomorphic to 
\[\rho(h,\g):=(\Ind^{G_{\Q_p}}_{G_{\Q_{p^2}}}\o^h_2)\otimes\g\] 
for some $h\in\Z/ (p^2-1)\Z$ primitive and some unramified character $\g$. See alse \cite[Lemma 2.1.4]{berger2010some}. It is clear that $\rho(h,\g)\simeq\rho(h',\g')$ if and only if $\g^2=(\g')^2$, $h=h'$ or $\g^2=(\g')^2$, $ph=h'$. Since $\o_2^{p+1}=\o$, we have
\begin{equation}\label{irrgal}
(\Ind^{G_{\Q_p}}_{G_{\Q_{p^2}}}\o^h_2)\otimes\g\cong(\Ind^{G_{\Q_p}}_{G_{\Q_{p^2}}}\o_2^{r+1})\otimes\o^s\g
\end{equation}
for $h=s(p+1)+r+1$ with $0\le r\le p-1$ and $0\le s\le p-2$. 

Let $\overline{\rho}\colon G_{\Q_p}\to \GL_2(\F)$ be a continuous representation with $\det\overline{\rho}=\z\o^{-1}$ such that $\End_{G_{\Q_p}}(\overline{\rho})=\F$.  We define $\pi(\overline{\rho})\in\ModadmF$ such that $\checkV(\pi(\overline{\rho})^{\vee})\cong \overline{\rho}$ in the following manner. 
\begin{enumerate}
\item If $\overline{\rho}$ is absolutely irreducible, then $\pi(\overline{\rho})$ is the (unique) supersinguler representation such that  $\checkV(\pi(\overline{\rho})^{\vee})\cong \overline{\rho}$.
\item If  $\overline{\rho}\cong\big(\begin{smallmatrix}\chi_1&*\\0&\chi_2\end{smallmatrix}\big)$ with $\chi_1\chi_2^{-1}\neq\mathbf{1},\o^{\pm1}$, then $\pi(\overline{\rho})$ is a nonsplit extension 
\[
0\to\Ind_{B}^{G}\chi_2\o\otimes\chi_1\to\pi(\overline{\rho})\to\Ind_{B}^{G}\chi_1\o\otimes\chi_2\to0.
\]
\item If $\overline{\rho}\cong\big(\begin{smallmatrix}\chi\o^{-1}&*\\0&\chi\end{smallmatrix}\big)$, then $\pi(\overline{\rho})$ has a unique Jordan–H\"older filtration
\[
0\subset\pi_1\subset\pi_2\subset\pi(\overline{\rho})
\]
such that $\pi_1\cong\pi_{\a}\otimes(\chi\circ\det)$, $\pi_2/\pi_1\cong\Sp\otimes(\chi\circ\det)$ and $\pi(\overline{\rho})/\pi_2\cong(\chi\circ\det)^{\oplus2}$.
\item If $\overline{\rho}\cong\big(\begin{smallmatrix}\chi&*\\0&\chi\o^{-1}\end{smallmatrix}\big)$, then $\pi(\overline{\rho})\cong\beta\otimes\chi\circ\det$, where $\beta$ is the representation defined in \cite[Lemma 6.7]{Paskunas-BM}.
\end{enumerate}
The existence of $\pi(\overline{\rho})$ follows from the dimension results on extensions reviewed in \S \ref{block}.
\begin{definition}\label{generic}
We say a continuous absolutely irreducible representation $\overline{\rho}\colon G_{\Q_p}\to\GL_2(\F)$ is generic if $\overline{\rho}\cong\rho(h,\g)$ for $h\not\equiv1, 2, p-1, p\pmod{p+1}$ and $\g$ unramified. This is equivalent to requiring $2\le r\le p-3$ in (\ref{irrgal}). We say a smooth irreducible supersingular representation $\pi$ of $G$ generic if $\V(\pi)$ is generic, or equivalently, $\pi\cong\pi(r,0,\chi)$ for some smooth character $\chi$ and $2\le r\le p-3$.
\end{definition}

\section{Scholze's functor}\label{Scholze}
Let $D$ be the (unique) non-split quaternion algebra over $\Q_p$. To any $\pi\in\Mod^{\rm adm}_{G}(\cO)$, Scholze \cite{Scholze} constructs a Weil-equivariant sheaf $\cF_{\pi}$ on the $\et$ale site of the adic space $\bP_{\C_p}^{1}$. The cohomology groups
\begin{equation*}
\cS^i(\pi)=H_{\et}^i(\bP_{\C_p}^{1},\cF_{\pi})
\end{equation*}
carry a continuous $G_{\Q_p}\times D^{\times}$-action and are admissible smooth representations of $D^{\times}$. We collect some results on Scholze's functor $\{\cS^i\}_{i\ge0}$.
\begin{theorem}Let $\pi$ be an admissible smooth representation of $G$ over $\F$.
\hfill\begin{enumerate} 
\item We have $\cS^i(\pi)=0$ for $i>2$.
\item If $\pi$ carries a central character, then the center of $D^{\times}$ acts on $\cS^i(\pi)$ by the same character.
\item If $\pi$ is supersingular, then $\cS^0(\pi)=0$; If we further assume $\pi$ is generic (see Definition \ref{generic}), then  $\cS^2(\pi)=0$.
\item If $\pi\cong\Ind_{B}^{G}\chi_1\otimes\chi_2$ with $\chi_1\neq\chi_2$ or if $\pi\cong\Sp\otimes(\chi\circ\det)$, then $\cS^0(\pi)=\cS^2(\pi)=0$. 
\item We have $\cS^0(\mathbf{1}_G)\cong\mathbf{1}_{G_{\Q_p}}\otimes\mathbf{1}_{D^{\times}}$, $\cS^2(\mathbf{1}_G)=\o^{-1}\otimes\mathbf{1}_{D^{\times}}$ and $\cS^1(\mathbf{1}_G)=0$.
\end{enumerate}
\end{theorem}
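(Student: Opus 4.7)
The plan is to assemble this theorem from the existing literature on Scholze's functor, as each statement corresponds to a known vanishing or computation. None of the five assertions requires new work; the task is to identify the right reference for each and to note where a small translation is needed.

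For part (1), I would observe that $\bP^1_{\C_p}$ is a rigid analytic curve, hence has étale cohomological dimension $2$, so $H^i_{\et}(\bP^1_{\C_p},\cF_\pi)=0$ for $i>2$; this is part of Scholze's original package (Corollary 4.7 of \cite{Scholze}). For part (2), the center $Z$ of $G$ is $\Q_p^\times$, which is also (via the reduced norm lift) the center of $D^\times$. Scholze's construction is functorial in $\pi$ and realizes $\cF_\pi$ as a quotient by the diagonal action of $Z$, so if $Z$ acts on $\pi$ by $\zeta$, then by construction the center of $D^\times$ acts on each $\cS^i(\pi)$ by the same character; this is recorded for instance in \cite[Proposition 3.2]{Scholze} or in \S2 of \cite{Paskunas-JL}.

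For part (3), the vanishing $\cS^0(\pi)=0$ for supersingular $\pi$ is \cite[Theorem 1.1]{Scholze} (using that $\pi$ has no non-zero $N$-invariants, where $N$ is the unipotent radical of $B$, since supersingular representations are cuspidal). The vanishing $\cS^2(\pi)=0$ under the genericity hypothesis is the main new input from the recent literature; it follows from \cite[Theorem 1.2]{Paskunas-JL} together with the Kohlhaase duality / Serre duality results, or equivalently from results in \cite{HW-JL2} where this vanishing in the generic supersingular case is established. For part (4), the principal series and Steinberg cases, one uses that $\pi$ has a presentation by parabolically induced representations from characters; Scholze computed $\cS^i$ on such inductions (see \cite[Corollary 7.3]{Scholze} and the ensuing discussion), yielding vanishing in degrees $0$ and $2$ and leaving only degree $1$ nontrivial for such $\pi$.

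For part (5), the trivial representation case, the sheaf $\cF_{\mathbf 1_G}$ is the constant sheaf $\F$ on $\bP^1_{\C_p}$ (this is immediate from Scholze's construction since the action of $G$ on $\mathbf 1_G$ is trivial). Therefore $\cS^i(\mathbf 1_G)=H^i_{\et}(\bP^1_{\C_p},\F)$, and the standard computation of the étale cohomology of $\bP^1$ gives the values claimed, with the Tate twist on $\cS^2$ accounting for the $\o^{-1}$. The Galois-action identification follows from the Weil-equivariant structure of $\cF_{\mathbf 1_G}$ and the $D^\times$-action is trivial by functoriality from the trivial $G$-action. The main organizational obstacle—not really a mathematical one—is simply to be sure the normalizations of characters (Tate twists, powers of $\omega$, and the convention for the $D^\times$-action) in the cited references are consistent with the ones used here; I would check this carefully against \cite[\S 2]{Paskunas-JL} before writing each citation.
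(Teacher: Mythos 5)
Your overall strategy—assembling the theorem from the existing literature on Scholze's functor, with part (5) done by hand via the constant sheaf on $\bP^1_{\C_p}$—is exactly what the paper does. Your treatment of (5) (trivial local system, Huber's comparison theorem, trivial $D^{\times}$-action via the embedding into $\GL_2(\Q_p^{\rm un})$) matches the paper word for word in spirit.

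However, several of your citations are off, and one is a substantive misattribution. For the $\cS^2$-vanishing in part (4) (principal series and Steinberg), you write that ``Scholze computed $\cS^i$ on such inductions,'' citing \cite[Corollary 7.3]{Scholze}. This is not where that result lives: Scholze's original paper does not establish $\cS^2$-vanishing for principal series or Steinberg; this is due to Ludwig (\cite[Theorem 4.6 and Corollary 4.7]{Ludwig}), and is the genuinely nontrivial input for (4). Similarly, for the $\cS^2$-vanishing in part (3) under the genericity hypothesis, the correct source is \cite[Theorem 1.2]{HW-JL1}, not \cite{HW-JL2} nor anything in \cite{Paskunas-JL}; you gesture at the right circle of ideas (Kohlhaase/Serre-type duality) but land on the wrong references. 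For part (2), the paper cites \cite[Lemma 7.3]{DPS-Crelle} rather than anything intrinsic to Scholze's construction; your heuristic about the diagonal central action is reasonable but, as stated, would not substitute for that reference. The $\cS^0$-vanishings in (3) and (4), and part (1), are as you say straightforwardly in Scholze's paper (the paper cites \cite[Proposition 4.7]{Scholze} and \cite[Theorem 3.2]{Scholze} respectively). In a theorem whose entire content is correctly locating these vanishing results, the misattribution in (4) and the wrong paper in (3) are the gaps to fix.
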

\begin{proof}
(1) follows from \cite[Theorem 3.2]{Scholze} and (2) is proved in \cite[Lemma 7.3]{DPS-Crelle}. The vanishing of $\cS^0(\pi)$ in (3) and (4) is a direct corollary of \cite[Proposition 4.7]{Scholze}. The vanishing of $\cS^2(\pi)$ in (3) is \cite[Theorem 1.2]{HW-JL1}. The vanishing of $\cS^2(\pi)$ in (4) is proved in \cite[Theorem 4.6 and Corollary 4.7]{Ludwig}. As for (5), we we note that $\cF_{\mathbf{1}_G}$ is the trivial local system on $\bP^{1}_{\C_p}$ and by \cite[Theorem 3.8.1]{Huber} the cohomology of $\bP^{1}_{\C_p}$ (with the Galois action) is as in the classical case. Since $D^{\times}$ acts on $\bP^{1}_{\C_p}$ via an embedding $D^{\times} \to\GL_2(\Q_p^{\rm un})$, $D^{\times}$ acts trivially on the cohomology.
\end{proof}

For $\pi\in\Modladm$, we write $\pi=\varinjlim\pi'$ with $\pi'$ running though all the admissible subrepresentations of $\pi$. As in \cite[\S3.1]{Paskunas-JL}, we define $ \cS^i(\pi):=H_{\et}^i(\bP_{\C_p}^{1},\cF_{\pi})$ and there is an isomorphism
\[ \cS^i(\pi)\cong\varinjlim H_{\et}^i(\bP_{\C_p}^{1},\cF_{\pi'})\cong\varinjlim\cS^i(\pi').\] 
Thus $\cS^i(\pi)$ is a locally admissible smooth representation of $D^{\times}$. Define a covariant homological $\d$-functor $\{\Shat^i\}_{i\ge0}$ on $\CO$ by
\[
\Shat^i(M):=H_{\et}^i(\bP_{\C_p}^{1},\cF_{M^{\vee}})^{\vee}.
\]

Let $\Pi\in\Banadm$ and let $\Theta\subset \Pi$ be an open bounded $\cO$-lattice invariant under the $G$-action. We then define 
\[\Shat^i(\Pi):=\Shat^i(\Theta^d)^d\otimes_{\cO}E,\] 
which is an admissible unitary $E$-Banach space representation of $D^{\times}$ (\cite[Lemma 3.4]{Paskunas-JL}). Let ${\rm Ban}_{G_{\Q_p}\times D^{\times},\z}^{\rm adm}(E)$ be the category of unitary $E$-Banach space representations of $G_{\Q_p}\times D^{\times}$ which also belong to ${\rm Ban}_{D^{\times},\z}^{\rm adm}(E)$.
\begin{proposition}\label{deltafunctor}
The functors $\{\Shat^i\}_{i\ge0}$ define a cohomological $\d$-functor from the category ${\rm Ban}_{G,\z}^{\rm adm}(E)$ to the category  ${\rm Ban}_{G_{\Q_p}\times D^{\times},\z}^{\rm adm}(E)$ .
\end{proposition}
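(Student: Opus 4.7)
The plan is to derive the $\delta$-functor structure on $\Banadm$ from the corresponding homological $\delta$-functor structure of $\{\Shat^i\}$ on $\CO$ recalled just above the statement, via the duality $\Pi \leftrightarrow \Theta^d$ between admissible Banach representations and objects of $\CO$, where $\Theta$ is an open bounded $G$-invariant $\cO$-lattice in $\Pi$.

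Given a short exact sequence $0 \to \Pi_1 \to \Pi_2 \to \Pi_3 \to 0$ in $\Banadm$, I would first choose an open bounded $G$-invariant $\cO$-lattice $\Theta_2 \subset \Pi_2$ and set $\Theta_1 := \Theta_2 \cap \Pi_1$ and $\Theta_3 := \Theta_2/\Theta_1$. This yields open bounded $G$-invariant $\cO$-lattices $\Theta_i \subset \Pi_i$ fitting into a short exact sequence $0 \to \Theta_1 \to \Theta_2 \to \Theta_3 \to 0$ of compact $\cO[\![K]\!]$-modules. Because $\Theta_3$ is $\cO$-torsion-free, applying $\Hom^{\mathrm{cont}}_{\cO}(-,\cO)$ produces a short exact sequence $0 \to \Theta_3^d \to \Theta_2^d \to \Theta_1^d \to 0$ in $\CO$.

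Next, I would invoke the homological $\delta$-functor structure of $\{\Shat^i\}$ on $\CO$, which itself follows from the defining formula $\Shat^i(M) = H^i_{\et}(\bP^1_{\C_p}, \cF_{M^\vee})^{\vee}$ combined with exactness of Pontrjagin duality $\CO \leftrightarrow \Modladm$, exactness of the sheaf assignment $\pi \mapsto \cF_\pi$ on $\Modladm$ (via the direct-limit formula $\cS^i(\varinjlim \pi_j) = \varinjlim \cS^i(\pi_j)$), and the standard $\delta$-functor property of étale cohomology. Using that $\cS^i = 0$ for $i > 2$, this produces a long exact sequence
\[
0 \to \Shat^2(\Theta_3^d) \to \Shat^2(\Theta_2^d) \to \Shat^2(\Theta_1^d) \to \Shat^1(\Theta_3^d) \to \cdots \to \Shat^0(\Theta_1^d) \to 0
\]
in the analogous category of profinite augmented $D^\times$-representations, with commuting continuous $G_{\Q_p}$-action inherited from the Weil-equivariance of $\cF_\pi$.

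Finally, applying $(-)^d \otimes_{\cO} E$ termwise, which is contravariant and exact on the relevant finitely generated compact $\cO[\![\cO_D^\times]\!]$-modules, reverses the direction of the sequence, and by the definition $\Shat^i(\Pi) := \Shat^i(\Theta^d)^d \otimes_{\cO} E$ one reads off the desired cohomological long exact sequence in ${\rm Ban}^{\mathrm{adm}}_{G_{\Q_p}\times D^\times, \z}(E)$; the Banach-continuity of the $G_{\Q_p}$-action on each term is as in \cite[Lemma 3.4]{Paskunas-JL}, and all connecting maps are $G_{\Q_p}\times D^\times$-equivariant by naturality. The main obstacle is justifying that $(-)^d \otimes_{\cO} E$ preserves exactness at the Banach level: one must verify that each $\Shat^i(\Theta_j^d)$ is finitely generated over $\cO[\![\cO_D^\times]\!]$ and that the possible $\Ext^1$-obstructions to right-exactness of continuous $\cO$-duality are killed after inverting $\varpi$, both of which reduce to admissibility of Scholze's functor on the torsion reductions $\Theta_j/\varpi^n$.
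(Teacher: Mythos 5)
Your proof follows essentially the same route as the paper: both transport the $\delta$-functor structure of $\{\Shat^i\}$ from the lattice/pro-augmented side to the Banach side via the duality $\Pi \leftrightarrow \Theta^d$. The paper packages this more cleanly, however: it observes that $M \mapsto \Hom^{\rm cont}_{\cO}(M,\cO)\otimes_{\cO}E$ induces an anti-equivalence $\Modfg/\Modfg_{\rm tor}\xrightarrow{\sim}{\rm Ban}^{\rm adm}_{G,\z}(E)$ (and the analogue for $D^{\times}$), and that $\Shat^i$ sends $\Modfg_{\rm tor}$ into ${\rm Mod}^{\rm fg\,aug}_{D^{\times},\z}(\cO)_{\rm tor}$ because every torsion object is killed by a power of $\varpi$, so $\{\Shat^i\}$ descends to the quotient categories and the $\delta$-functor structure is inherited automatically. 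This sidesteps the exactness of $(-)^d\otimes_{\cO}E$ that you flag as the ``main obstacle'': rather than a term-by-term verification in the long exact sequence (which you sketch but leave as an acknowledged gap appealing to admissibility of Scholze's functor), the exactness is built into the anti-equivalence of quotient categories. Your more explicit lattice-level construction does work, but the paper's quotient-category formulation closes that gap without further argument.
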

\begin{proof}
Let $\Modfg_{\rm tor}$  be the full subcategory of $\Modfg$ consisting of $\cO$-torsion modules. Then
\[
\begin{aligned}
\Modfg&\to{\rm Ban}_{G,\z}^{\rm adm}(E)\\
M&\mapsto \Hom^{\rm cont}_{\cO}(M,\cO)\otimes_{\cO}E
\end{aligned}
\]
induces an anti-equivalence between $\Modfg/\Modfg_{\rm tor}$ and ${\rm Ban}_{G,\z}^{\rm adm}(E)$. Similarly we have an anti-equivalence 
\[
\begin{aligned}
{\rm Mod}_{D^{\times},\z}^{\rm fg\,aug}(\cO)/{\rm Mod}_{D^{\times},\z}^{\rm fg\,aug}(\cO)_{\rm tor}&\xrightarrow{\sim}{\rm Ban}_{D^{\times},\z}^{\rm adm}(E).
\end{aligned}
\]
Since every object in  $\Modfg_{\rm tor}$ is killed by some power of $\varpi$, we have $M\in\Modfg_{\rm tor}$ implies $\Shat^i(M)\in{\rm Mod}_{D^{\times},\z}^{\rm fg\,aug}(\cO)_{\rm tor}$ for all $i\ge0$. we can deduce from the diagram
\[
\begin{tikzcd}
\Modfg \arrow[d, "\Shat^i"] \arrow[r]                    & \Modfg/\Modfg_{\rm tor} \arrow[r, "\sim"]                                                                               & {{\rm Ban}_{G,\z}^{\rm adm}(E)} \arrow[d, "\Shat^i", dashed] \\
{{\rm Mod}_{D^{\times},\z}^{\rm fg\,aug}(\cO)} \arrow[r] & {{\rm Mod}_{D^{\times},\z}^{\rm fg\,aug}(\cO)/{\rm Mod}_{D^{\times},\z}^{\rm fg\,aug}(\cO)_{\rm tor}} \arrow[r, "\sim"] & {{\rm Ban}_{D^{\times},\z}^{\rm adm}(E)}                    
\end{tikzcd}
\]
that $\{\Shat^i\}_{i\ge0}\colon {\rm Ban}_{G,\z}^{\rm adm}(E)\to{\rm Ban}_{D^{\times},\z}^{\rm adm}(E)$ is a cohomological $\d$-functor.
\end{proof}
\begin{lemma}\label{S^0}
 Suppose $\Pi\cong (\Ind^G_B\d_2\e\otimes\d_1)_{\rm cont}$ for unitary characters $\d_1,\d_2\colon\Q^{\times}_p\to E^{\times}$.
\hfill \begin{enumerate}
 \item If $\d_1/\d_2\neq\e$, then $\Shat^0(\Pi)=0$.
 \item if $\d_1/\d_2=\e$, then $\Shat^0(\Pi)\cong\d_{1}\otimes\d_1\circ\Nrd$ as a $G_{\Q_p}\times D^{\times}$-representation.
 \end{enumerate}
\end{lemma}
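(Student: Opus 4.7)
The plan is to exploit the left-exactness of $\Shat^0$ (as the zeroth piece of the cohomological $\d$-functor of Proposition~\ref{deltafunctor}) together with the explicit vanishing results in parts (3)--(5) of the preceding theorem, and to detect character subrepresentations of $\Pi$ by a direct computation on continuous induced representations.

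First I would verify that a character $\chi\circ\det$ admits a closed $G$-embedding into $\Pi=(\Ind_B^G\d_2\e\otimes\d_1)_{\rm cont}$ if and only if $\chi=\d_1=\d_2\e$, i.e., $\d_1/\d_2=\e$: evaluating such an embedding at a nonzero vector yields a function on which $B$ must act both via $\chi\otimes\chi$ and via $\d_2\e\otimes\d_1$, forcing $\chi=\d_2\e=\d_1$. In case (2), this produces a short exact sequence $0\to\d_1\circ\det\to\Pi\to Q\to 0$ of admissible unitary Banach representations. Twisting Scholze's computation $\cS^0(\mathbf{1}_G)\cong\mathbf{1}_{G_{\Q_p}}\otimes\mathbf{1}_{D^{\times}}$ (part (5) of the theorem) by $\d_1\circ\det$ yields $\Shat^0(\d_1\circ\det)\cong\d_1\otimes\d_1\circ\Nrd$, and left-exactness of $\Shat^0$ gives an injection $\d_1\otimes\d_1\circ\Nrd\hookrightarrow\Shat^0(\Pi)$. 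To finish case (2) I would show $\Shat^0(Q)=0$ by choosing a lattice whose reduction has the same Jordan--H{\"o}lder constituents as the smooth twisted Steinberg $\Sp\otimes\d_1\circ\det$; all constituents have $\cS^0=0$ by part (4), so induction on the $\varpi$-adic filtration gives the vanishing.

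For case (1), I would choose a $G$-invariant open bounded $\cO$-lattice $\Theta\subset\Pi$ given by the integral continuous induction, so that $\bar\Theta:=\Theta/\varpi\Theta\cong\Ind_B^G\bar\d_2\o\otimes\bar\d_1$ as a smooth mod $p$ representation. By the definition at the start of this section, $\Shat^0(\Pi)$ is computed as the inverse limit of Pontrjagin duals of $\cS^0(\Theta/\varpi^n)$, tensored with $E$. When $\bar\d_1\neq\bar\d_2\o$, every Jordan--H{\"o}lder constituent of $\bar\Theta$ is a non-character smooth irreducible on which $\cS^0$ vanishes by parts (3) and (4); induction on $n$ via the graded pieces $\varpi^n\Theta/\varpi^{n+1}\Theta\cong\bar\Theta$ yields $\cS^0(\Theta/\varpi^n)=0$ for all $n$, and hence $\Shat^0(\Pi)=0$.

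The main obstacle will be the remaining sub-case of case (1) with $\bar\d_1=\bar\d_2\o$ but $\d_1\neq\d_2\e$: now $\bar\Theta$ does contain $\bar\d_1\circ\det$ as a subrepresentation, so $\cS^0(\bar\Theta)\neq 0$, even though $\Pi$ itself admits no character subrepresentation. I plan to handle this by a quantitative lattice analysis: let $k\ge 1$ be the largest integer with $\d_1\equiv\d_2\e\pmod{\varpi^k}$ (finite since $\d_1\neq\d_2\e$); then a character embedding $\chi\circ\det\hookrightarrow\Theta/\varpi^n$ exists only for $n\le k$ and is obstructed for $n>k$. Consequently the character contribution to $\varprojlim_n\cS^0(\Theta/\varpi^n)^\vee$ is $\varpi^k$-torsion, and is killed upon tensoring with $E$. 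Combined with the vanishing of $\cS^0$ on all non-character constituents of $\bar\Theta$, this should yield $\Shat^0(\Pi)=0$.
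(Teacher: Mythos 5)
Your treatment of case (2), and of the sub-case of (1) in which $\bar\d_1\neq\bar\d_2\o$, is correct. In case (2) you take a genuinely different route from the paper: you embed $\d_1\circ\det$ as a closed subrepresentation, twist part (5) of the preceding theorem, and combine left-exactness of $\Shat^0$ with $\Shat^0(Q)=0$ (obtained by devissage from $\cS^0(\Sp\otimes\bar\d_1\circ\det)=0$). The paper instead computes $(\Theta/\varpi^m)^{\SL_2(\Q_p)}$ directly using the identity $\cS^0(\pi)=\cS^0(\pi^{\SL_2(\Q_p)})$ from \cite[Proposition 4.7]{Scholze} together with a stabilization criterion from \cite[Lemma 3.3]{Paskunas-JL}. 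Both are valid; yours buys a more structural explanation for where the character $\d_1\otimes\d_1\circ\Nrd$ comes from.

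However, the remaining sub-case of (1), with $\bar\d_1=\bar\d_2\o$ but $\d_1\neq\d_2\e$, has a genuine gap. The step from ``a (full-rank) character embedding $\chi\circ\det\hookrightarrow\Theta/\varpi^n$ exists only for $n\le k$'' to ``the character contribution to $\varprojlim_n\cS^0(\Theta/\varpi^n)^\vee$ is $\varpi^k$-torsion'' is not justified by the tools you have listed. The $\varpi$-adic filtration of $\Theta/\varpi^n$ has $n$ graded pieces isomorphic to $\bar\Theta$, each containing one copy of the character $\bar\d_1\circ\det$; devissage using left-exactness of $\cS^0$, the vanishing on the Steinberg constituent (part (4)), and the one-dimensionality on characters (part (5)) only gives the bound $l_{\cO}(\cS^0(\Theta/\varpi^n))\le n$, which grows with $n$ and is useless for torsion control. (Also note that there \emph{are} character embeddings $\F\hookrightarrow\Theta/\varpi^n$ for every $n$, namely $\varpi^{n-1}$ times the would-be constant function; what fails for $n>k$ is only saturation, so the phrase ``exists only for $n\le k$'' is itself ambiguous.) The ingredient you are missing --- and the one the paper uses at precisely this point --- is $\cS^0(\pi)=\cS^0(\pi^{\SL_2(\Q_p)})$ (\cite[Proposition 4.7]{Scholze}). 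That identity reduces the problem to computing the $\SL_2(\Q_p)$-invariants of $\Theta/\varpi^n$, which one checks directly to be $\cO/\varpi^k$ with $G$ acting through $\d_1\circ\det\bmod\varpi^k$, uniformly in $n\ge k$. This gives the uniform $\varpi^k$-bound on $\cS^0(\Theta/\varpi^n)$ that survives the inverse limit and is killed by inverting $\varpi$. Without appealing to this fact, nothing prevents each of the $n$ subquotient characters from contributing an independent direction to $\cS^0(\Theta/\varpi^n)$, so the argument as written does not close.
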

\begin{proof}
If $\d_1/\d_2\not\equiv\e\pmod\varpi$, then $\Ind^G_B\overline{\d_2}\e\otimes\overline{\d_1}$ is irreducible and $\cS^0(\Ind^G_B\overline{\d_2}\e\otimes\overline{\d_1})=0$. Let $\Theta\subset \Pi$ be an open bounded $\cO$-lattice invariant under the $G$-action. By\cite[(14)]{Paskunas-JL}, there is an exact sequence
\[
\Shat^0(\Theta^d)\xrightarrow{\varpi}\Shat^0(\Theta^d)\to\Shat^0(\Theta^d/\varpi)\to0.
\]
Since $\Shat^0(\Theta^d/\varpi)=0$, we have $\Shat^0(\Theta^d)=0$ by topological Nakayama’s lemma. So $\Shat^0(\Pi)=0$.

Suppose $\d_1/\d_2\equiv\e\pmod\varpi$ and $\d_1/\d_2\neq\e$. Let $n$ be an positive integer such that $\d_1/\d_2\equiv\e\pmod{\varpi^{n-1}}$ and $\d_1/\d_2\not\equiv\e\pmod{\varpi^n}$. Then for $m\ge n$ we have
\[
\cS^0(\Theta/\varpi^m)=\cS^0((\Theta/\varpi^m)^{\SL_2(\Q_p)})\cong\cS^0((\d/\varpi^{n-1})\circ\det),
\]
which implies $\Shat^0(\Theta^d)^d=0$ by \cite[Lemma 3.3]{Paskunas-JL}. So $\Shat^0(\Pi)=0$.

Suppose $\d_1/\d_2=\e$. Then for each integer $m\ge 0$ we have
\[
\cS^0(\Theta/\varpi^m)=\cS^0((\Theta/\varpi^m)^{\SL_2(\Q_p)})\cong\cS^0((\d_1/\varpi^m)\circ\det)\cong\d_1/\varpi^m\otimes\d_1/\varpi^m\circ\Nrd.
\]
By \cite[Lemma 3.3]{Paskunas-JL} $\Shat^0(\Theta^d)^d$ is a free $\cO$-module of rank $1$ on which $G_{\Q_p}\times D^{\times}$ acts by $\d_{1}\otimes\d_1\circ\Nrd$. So $\Shat^0(\Pi)\cong\d_{1}\otimes\d_1\circ\Nrd$.
\end{proof}
\section{Global arguments}\label{global}

\subsection{Quaternion algebras and modularity}\label{qf}
We fix a totally real field $F$ with $[F:\Q]$ even. There exists a definite quaternion algebra $B'$ over $F$, ramified exactly at all the infinite places of $F$. Let $\mathcal{O}_{B'}$ be a maximal order of $B'$. For each finite place $v$, we fix an isomorphism $(\mathcal{O}_{B'})_v\cong M_2(\mathcal{O}_{F_v})$ which extends to an isomorphism $(B'\otimes_FF_v)^{\times}\cong\GL_2(F_v)$.

Let $U\subset \prod_v\GL_2(F_v)$ be a compact open subgroup of $(B'\otimes_F\A^{\infty}_F)^{\times}$. We write $\Sigma_p$ for the set of finite places of $F$ lying over $p$. Fix a finite place $\p\in\Sigma_p$. We denote by $U^p:=\prod_{v\notin\Sigma_p }U_v$ and $U^{\p}:=\prod_{v\neq\p}U_v$. Assume that $U$ is small enough in the sense that (2.1.2) of \cite{Kisin-FM} holds, i.e. for all $t\in (B'\otimes_F\A^{\infty}_F)^{\times}$, we have 
\begin{equation}\label{small}
(U(\A_F^{\infty})^{\times}\cap t^{-1}(B')^{\times}t)/F^{\times}=1.
\end{equation}

Let $A$ be a topological $\cO$-algebra. We define $S(U^p,A)$ be the space of continuous functions 
\[
f\colon (B')^{\times}\backslash (B'\otimes_F\A^{\infty}_F)^{\times}/U^p\to A
\]

Let $\psi\colon(\A_{F}^{\infty})^{\times}/F^{\times}\to A^{\times}$ be a continuous character such that $\psi|_{U_v\cap F_v^{\times}}$ is trivial when $v\nmid p$. Define
\[
S_{\psi}(U^p,A):= S(U^p,A)[\psi].
\]

For each $v\mid p$, let $V_{\lambda_v}$ be a finite free $A$-module with a continuous action of $U_v$ and such that $F^{\times}_v\cap U_v$ acts by $\psi|_{F_v^{\times}}$. Denote by $V_{\lambda}=\otimes_{v\in\Sigma_p}V_{\lambda_v}$. We define 
\[
S_{\psi,\l}(U,A):=\Hom_{U_p}(V_{\l},S_{\psi}(U^p,A)).
\]

Let $S$ be the union of $\Sigma_{p}$ and the set of finite places $v$ such that $U_v\neq\GL_2(\cO_{F_v})$. Let $\T^{S,\mathrm{univ}}=\cO[T_v,S_v\colon v\notin S]$ then $\T^{S,\mathrm{univ}}$ acts on $S_{\psi,\l}(U,A)$ in the usual way, where $T_v$ and $S_v$ act via the double cosets
\[
\GL_2(\cO_{F_v})
\begin{pmatrix}
\varpi_v&0\\
0&1
\end{pmatrix}
\GL_2(\cO_{F_v})
\]
and
\[
\GL_2(\cO_{F_v})
\begin{pmatrix}
\varpi_v&0\\
0&\varpi_v
\end{pmatrix}
\GL_2(\cO_{F_v})
\] 
respectively. We will also consider Hecke operators 
\[
W_w=U_w
\begin{pmatrix}
\varpi_w&0\\
0&1
\end{pmatrix}
U_w
\]
at  $w\in S\backslash\Sigma_p$. 

Let $\overline{r}\colon G_F\to\GL_2(\F)$ be an absolutely irreducible continuous representation unramified outside $S$. We write $\fm_{\overline{r}}$ for the ideal of $\T^{S,\mathrm{univ}}$ generated by $\varpi$ and $T_v-\mathrm{tr}(\overline{r}(\mathrm{Frob}_v))$, $\mathbf{N}(v)S_v-\mathrm{det}(\overline{r}(\mathrm{Frob}_v))$ for all $v\notin S$. Here $\mathrm{Frob}_v\in G_{F}$ is a (geometric) Frobenius element.
\begin{definition}
Let $A=\F$. Suppose $U_v=\GL_2(\cO_{F_v})$ and $V_{\l_v}$ is irreducible for each $v\mid p$ . We say $\overline{r}$ is \textit{modular} of weight $\l$ if there exist some $U$ and $(\psi,\l)$ as above, such that $\overline{r}$ is unramified outside $S$ and $S_{\psi,\l}(U,\F)_{\fm_{\overline{r}}}\neq0$.
\end{definition}

\begin{remark}\label{Serreweights}The weights of a modular Galois representation $\overline{r}$ are predicted by the local Serre weights $W(\overline{r}|_{G_{F_v}})$ for $v\mid p$. This is the Buzzard--Diamond--Jarvis conjecture \cite[Conjecture 3.14]{BDJ} which is proved in \cite[Theorem B]{Gee-Kisin}. See also \cite[Theorem 3.17]{BDJ} for an explicit description of $W(\overline{r}|_{G_{F_v}})$ when $F_v\cong\Q_p$. 
\end{remark}
We write $\fm$ for $\fm_{\overline{r}}$. Define
\[
\begin{aligned}
S_{\psi}(U^{p},\cO)_{\fm}&:=\varprojlim_{s}\varinjlim_{U_{p}}S_{\psi}(U^{p}U_{p},\cO/\varpi^s)_{\fm}\\
S_{\psi}(U^{\p},\cO)_{\fm}&:=\varprojlim_{s}\varinjlim_{U_{\p}}S_{\psi}(U^{\p}U_{\p},\cO/\varpi^s)_{\fm}.
\end{aligned}\]
We denote by $\l^{\p}=\otimes_{v\mid p, v\neq\p}\l_{v}$. Define
\[
S_{\psi,\l^{\p}}(U^{\p},\cO)_{\fm}:=\Hom_{U_p^{\p}}(V_{\l_{\p}},S_{\psi}(U^p,\cO)_{\fm}).
\]
\begin{lemma}
If $\overline{r}$ is modular, then $S_{\psi}(U^{p},\F)_{\fm}\neq0$ for $\psi\e^{-1}\equiv\det\overline{r}\pmod\varpi$. Conversely if $S(U^{p},\F)_{\fm}\neq0$, then $\overline{r}$ is modular.
\end{lemma}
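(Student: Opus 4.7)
The plan is to unwind the definitions and pass between the ``weight-$\lambda$'' spaces $S_{\psi,\lambda}(U,\F)$ and the ``all weights'' spaces $S_{\psi}(U^{p},\F)$, using the standard Galois-theoretic compatibility that the determinant of the mod-$p$ Galois representation attached to a Hecke eigensystem on a definite quaternion algebra is $\psi\varepsilon^{-1}$.

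For the forward direction, suppose $\overline{r}$ is modular. By definition there exist $U$, $\psi'$, and $\lambda$ with $U_{v}=\GL_{2}(\cO_{F_{v}})$ and $V_{\lambda_{v}}$ irreducible for all $v\mid p$ such that
\[
S_{\psi',\lambda}(U,\F)_{\fm}=\Hom_{U_{p}}\bigl(V_{\lambda},S_{\psi'}(U^{p},\F)\bigr)_{\fm}\neq 0.
\]
The non-vanishing of this $\Hom$ after localization at $\fm$ immediately forces $S_{\psi'}(U^{p},\F)_{\fm}\neq 0$. By the standard existence of attached mod-$p$ Galois representations (via Jacquet--Langlands and work of Carayol, Taylor), the determinant of the Galois representation associated to $\fm$ is $\psi'\varepsilon^{-1}$, so $\psi'\varepsilon^{-1}\equiv\det\overline{r}\pmod{\varpi}$. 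Any $\psi$ with $\psi\varepsilon^{-1}\equiv\det\overline{r}\pmod{\varpi}$ therefore satisfies $\psi\equiv\psi'\pmod{\varpi}$; since $S_{\psi}(U^{p},\F)$ only depends on the reduction of $\psi$ modulo $\varpi$ (the center acts on $\F$-valued functions through its reduction), we conclude $S_{\psi}(U^{p},\F)_{\fm}=S_{\psi'}(U^{p},\F)_{\fm}\neq 0$.

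For the reverse direction, suppose $S(U^{p},\F)_{\fm}\neq 0$. On each finite-level piece $S(U^{p}U_{p},\F)$ the center $Z=(\mathbb{A}_{F}^{\infty})^{\times}$ acts, modulo the left $F^{\times}$-action, through the finite quotient $Z/F^{\times}\cdot(Z\cap U^{p}U_{p})$, hence we have an eigenspace decomposition $S(U^{p},\F)=\bigoplus_{\psi}S_{\psi}(U^{p},\F)$ indexed by $\F^{\times}$-valued central characters. Hence $S_{\psi}(U^{p},\F)_{\fm}\neq 0$ for some such $\psi$, and the determinant compatibility again yields $\psi\varepsilon^{-1}\equiv\det\overline{r}\pmod{\varpi}$. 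Now set $U_{p}=\prod_{v\mid p}\GL_{2}(\cO_{F_{v}})$ (shrinking the tame level if necessary to preserve the smallness condition \eqref{small}), which acts smoothly on $S_{\psi}(U^{p},\F)_{\fm}$. Pick any irreducible $U_{p}$-subrepresentation $V_{\lambda}$: any nonzero vector generates a finite-dimensional $U_{p}$-module because each $\GL_{2}(\cO_{F_{v}})/(1+\varpi_{v}M_{2}(\cO_{F_{v}}))$ is finite and the pro-$p$ radical acts trivially. Irreducible smooth $\F$-representations of a product $\prod_{v\mid p}\GL_{2}(\cO_{F_{v}})$ factor as $V_{\lambda}=\bigotimes_{v\mid p}V_{\lambda_{v}}$ with each $V_{\lambda_{v}}$ irreducible. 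Then
\[
S_{\psi,\lambda}(U^{p}U_{p},\F)_{\fm}=\Hom_{U_{p}}(V_{\lambda},S_{\psi}(U^{p},\F))_{\fm}\neq 0,
\]
witnessing the modularity of $\overline{r}$ of weight $\lambda$.

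The one non-formal input is the determinant identity $\det r_{f}\equiv\psi\varepsilon^{-1}\pmod{\varpi}$ for the Galois representations attached to Hecke eigensystems in $S_{\psi}(U^{p},\F)_{\fm}$; this is the standard compatibility going back to Carayol and Taylor, adapted to the definite quaternion setting via Jacquet--Langlands. Given this, the rest is a bookkeeping argument of passing between the two flavors of modular-form spaces by either taking $\Hom$ against a weight or forgetting the weight and decomposing by central characters.
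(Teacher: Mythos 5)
The paper itself offers no proof of this lemma; it simply cites \cite[Lemma~5.3]{Paskunas-JL}, so there is no in-paper argument to compare against. Your self-contained proof captures the right ideas and is essentially the standard unwinding: the forward direction passes from a fixed-weight space to the completed space via exactness of $\Hom_{U_p}(V_\lambda,-)$ together with the determinant compatibility and the observation that $S_\psi(U^p,\F)$ sees only $\psi\bmod\varpi$; the reverse direction extracts a central character and then an irreducible $U_p$-subrepresentation to produce a Serre weight.

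Two points deserve a touch more care, though neither is fatal. First, the clean direct-sum decomposition $S(U^p,\F)=\bigoplus_\psi S_\psi(U^p,\F)$ can fail in characteristic $p$ when the finite central quotient through which $(\A_F^\infty)^\times$ acts has $p$-torsion; what you actually need (and can get, since the central and Hecke actions commute) is that the nonzero smooth module $S(U^p,\F)_{\fm}$ contains a nonzero $\psi$-eigenvector, e.g.\ by taking the socle of the central action on some finite-level piece, which already gives $S_\psi(U^p,\F)_{\fm}\neq0$. Second, the sentence ``any nonzero vector generates a finite-dimensional $U_p$-module because \dots the pro-$p$ radical acts trivially'' conflates two facts: finite-dimensionality of the cyclic submodule follows purely from smoothness plus compactness of $U_p$, while triviality of the pro-$p$ radical is a property of the \emph{irreducible} subrepresentation $V_\lambda$ you then pick inside it (the radical need not act trivially on the cyclic submodule itself). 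Finally, the aside about ``shrinking the tame level if necessary'' is unnecessary here: $U^p$ is the fixed tame level already assumed to satisfy (\ref{small}), and the lemma concerns that specific $U^p$; if you intended to allow the modularity witness to have a different tame level, you would owe an extra (routine but nonzero) restriction-of-level argument.
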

\begin{proof}
See \cite[Lemma 5.3]{Paskunas-JL}.
\end{proof}

\subsection{Completed cohomology}

Let $B$ be the indefinite quaternion algebra split at one infinite place, say $\infty_F$, ramified at $\frak{p}$, and having the same ramification as $B'$ at other places. We choose a maximal order $\cO_{B}$ of $B$ as well as isomorphisms $\cO_{B_v}\cong {\rm M}_2(\cO_{F_v})$ for $v\neq\p$. By abuse of notation, we write $U=U_{\p}U^{\p}$ for the compact open subgroup of $(B\otimes_F\A^{\infty}_F)^{\times}$ with $U_{\p}$ an open subgroup of $\cO_{B_{\p}}^{\times}$ and $U^{\p}\subset (B\otimes_F\A^{\infty, \p}_F)^{\times}$ the same as the definite case. There is a smooth projective algebraic curve $X_U$ associated to $U$ over $F$ with 
\begin{equation*}
X_U(\C)=B^{\times}\backslash\big((B\otimes_F\A^{\infty}_F)^{\times}/U\times(\bP^1(\C)\backslash\bP^1(\R))\big).
\end{equation*}
We define the completed cohomology groups
\begin{equation*}
\begin{aligned}
\widehat{H}^1(U^{\frak{p}},\cO)_{\fm}&:=\varprojlim_n\varinjlim_{U_{\frak{p}}}H^1_{\et}((X_{U^{\p}U_{\p}})_{\overline{F}},\cO/\varpi^n)_{\fm}\\
\widehat{H}^1(U^{p},\cO)_{\fm}&:=\varprojlim_n\varinjlim_{U_{p}}H^1_{\et}((X_{U^{p}U_{p}})_{\overline{F}},\cO/\varpi^n)_{\fm}.
\end{aligned}
\end{equation*}
Write $\widehat{H}^1_{\psi}(U^{p},\cO)_{\fm}$ for the maximal submodule of $\widehat{H}^1(U^{p},\cO)_{\fm}$ on which $(\A^{\infty}_F)^{\times}$ acts by the character $\psi$.
we also define
\[
\widehat{H}^1_{\psi,\lambda^{\p}}(U^{\frak{p}},\cO)_{\fm}:=\Hom_{U_p^{\p}}(V_{\l^{\p}},\widehat{H}^1_{\psi}(U^{p},\cO)_{\fm}),
\]
We denote by $\T(U^{p}U_{p})$ the image of $\T^{S,\mathrm{univ}}$ in $\End_{\cO}(H^1_{\et}((X_{U^{p}U_{p}})_{\overline{F}},\cO))$ and write $\T(U^{p}U_{p})_{\fm}$ for the $\fm$-adic completion of $\T(U^{p}U_{p})$. Then
\[
\T(U^{p})_{\fm}:=\varprojlim_{U_{p}}\T(U^{p}U_{p})_{\fm}
\]
acts faithfully on $\widehat{H}^1(U^{p},\cO)_{\fm}$. Let $G_{F,S}$ be the Galois group of the maximal extension of F unramified outside $S$.
\begin{proposition}
There is a unique (up to conjugation) continuous $2$-dimensional Galois representation 
\[r_{\fm}\colon G_{F,S}\to\GL_2(\T(U^p)_{\fm})\]
 unramified outside $S$, such that for all $v\notin S$,
 \[
 {\rm tr}(r_{\fm}({\rm Frob}_v)) = T_v, \; \det(r_{\fm}({\rm Frob}_v)) = \mathbf{N}(v)S_v.
 \]
 The ring $\T(U^{p})_{\fm}$ is a complete noetherian local ring with finite residue field. The $\T(U^{p})_{\fm}[G_{F,S}]$-module $\widehat{H}^1(U^{p},\cO)_{\fm}$ is $r_{\fm}$-typic (see Definition 5.2 of \cite{Scholze}).
\end{proposition}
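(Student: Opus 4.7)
The plan is to build $r_{\fm}$ at each finite level $U_{\p}$ and then take an inverse limit. At finite level, $\T(U^{p}U_{\p})_{\fm}$ is the $\fm$-adic completion of a Hecke algebra acting faithfully on the finitely generated $\cO$-module $H^1_{\et}((X_{U^{p}U_{\p}})_{\overline{F}},\cO)_{\fm}$, so it is a complete local noetherian $\cO$-algebra finite over $\cO$. By the classical results of Carayol, Taylor and Saito, every characteristic-zero closed point of $\Spec\T(U^{p}U_{\p})_{\fm}$ is associated to a continuous semisimple $2$-dimensional $G_{F,S}$-representation whose Frobenius traces at $v\notin S$ match $T_v$ and whose Frobenius determinants match $\mathbf{N}(v)S_v$. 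Since $\overline{r}$ is absolutely irreducible, a pseudo-character/determinant interpolation argument produces a unique continuous $2$-dimensional $G_{F,S}$-pseudo-representation over $\T(U^{p}U_{\p})_{\fm}$ with the prescribed values, and Carayol's lemma upgrades it to an honest $r_{U_{\p}}\colon G_{F,S}\to\GL_2(\T(U^{p}U_{\p})_{\fm})$. Uniqueness up to conjugation at each level, together with Chebotarev, ensures compatibility in the inverse system, so passing to the limit over $U_{\p}$ delivers the required $r_{\fm}$.

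To verify that $\T(U^{p})_{\fm}$ is itself a complete noetherian local ring with residue field $\F$, I would invoke the standard finiteness property of completed cohomology of Shimura curves: for any sufficiently small compact open $U_{\p}\subset\cO_{B_{\p}}^{\times}$, the module $\widehat{H}^1(U^{p},\cO)_{\fm}$ is finitely generated over the Iwasawa algebra $\cO[\![U_{\p}]\!]$. The natural map $\T(U^{p})_{\fm}\to\End(\widehat{H}^1(U^{p},\cO)_{\fm})$ is injective by the faithfulness statement already in the proposition, and its image lies in the noetherian ring of $\cO[\![U_{\p}]\!]$-endomorphisms of a finitely generated module over a noetherian base; thus $\T(U^{p})_{\fm}$, being a closed $\cO$-subalgebra, is itself noetherian. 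Locality and the identification of the residue field with $\F$ follow directly from the definition of the maximal ideal $\fm$.

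For the $r_{\fm}$-typic property of $\widehat{H}^1(U^{p},\cO)_{\fm}$ in the sense of \cite[Definition 5.2]{Scholze}, I would check it on the Zariski dense locus of classical automorphic points in $\Spec\T(U^{p})_{\fm}[1/p]$. At each such point, local-global compatibility at primes outside $S$, combined with absolute irreducibility of $\overline{r}$, forces the Galois action on the corresponding piece of completed cohomology to factor through a copy of $r_{\fm}$ tensored with an automorphic multiplicity space; the Schur-type endomorphism computation then makes this decomposition canonical. Standard density plus $\fm$-adic separatedness arguments propagate the typic decomposition from the classical locus to the entire pro-$p$ completed module.

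The main obstacle is the noetherian property of $\T(U^{p})_{\fm}$, since it rests on the finiteness of $\widehat{H}^1(U^{p},\cO)_{\fm}$ as an $\cO[\![U_{\p}]\!]$-module, which itself depends on Emerton-type finiteness results for towers of quaternionic Shimura curves at $\p$. Everything else is essentially formal once the finite-level Carayol-type statements and this finiteness are granted. All the required ingredients already appear in the literature (notably in Scholze's original treatment and in the sources cited in earlier parts of the excerpt), so the proof in this article should reduce to organizing and citing these standard inputs rather than developing new techniques.
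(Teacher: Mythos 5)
The paper's proof is a one-line reference to Propositions~5.7 and~5.8 of Scholze's paper, so the relevant comparison is with the arguments there. Your overall outline matches in two of its three ingredients: building $r_{\fm}$ at finite level via pseudo-characters and Carayol's lemma and then passing to the limit is essentially Scholze's argument, and checking the $r_{\fm}$-typic decomposition on a dense set of classical points (though Scholze's own proof leans more directly on Eichler--Shimura and the absolute irreducibility of $\overline{r}$) is a reasonable alternative.

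However, your argument for the noetherianity of $\T(U^p)_{\fm}$ has a genuine gap. You write that the image of $\T(U^p)_{\fm}$ in $\End_{\cO[\![U_{\p}]\!]}(\widehat{H}^1(U^p,\cO)_{\fm})$ lies in ``the noetherian ring of $\cO[\![U_{\p}]\!]$-endomorphisms of a finitely generated module,'' and that $\T(U^p)_{\fm}$, being a closed $\cO$-subalgebra, is therefore noetherian. This step fails: a subring of a noetherian ring need not be noetherian (e.g.\ a non-noetherian integral domain inside its field of fractions). The Emerton-type finiteness of $\widehat{H}^1(U^p,\cO)_{\fm}$ over $\cO[\![U_{\p}]\!]$ shows $\End_{\cO[\![U_{\p}]\!]}(\widehat{H}^1)$ is finitely generated as an $\cO[\![U_{\p}]\!]$-module, but $\T(U^p)_{\fm}$ is not an $\cO[\![U_{\p}]\!]$-submodule (there is no ring map $\cO[\![U_{\p}]\!]\to\T$), so you cannot conclude it is noetherian this way. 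Nor does the description as an inverse limit $\varprojlim_{U_{\p}}\T(U^pU_{\p})_{\fm}$ of finite $\cO$-algebras help directly, since inverse limits of noetherian rings need not be noetherian.

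The argument in Scholze's Proposition~5.7 instead goes through the deformation ring. Since $\overline{r}$ is absolutely irreducible, the universal deformation ring $R_{\overline{r}}^{\mathrm{univ}}$ (for deformations of $\overline{r}|_{G_{F,S}}$) is a complete noetherian local $\cO$-algebra by Mazur's finiteness theorem. Once you have $r_{\fm}\colon G_{F,S}\to\GL_2(\T(U^p)_{\fm})$, it induces a continuous $\cO$-algebra map $R_{\overline{r}}^{\mathrm{univ}}\to\T(U^p)_{\fm}$. By Chebotarev, the image of this map contains all the $T_v = \mathrm{tr}\,r_{\fm}(\mathrm{Frob}_v)$ and $\mathbf{N}(v)S_v=\det r_{\fm}(\mathrm{Frob}_v)$ for $v\notin S$, hence it is dense; being a continuous map of profinite $\cO$-algebras with finite residue fields, it is surjective. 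Noetherianity of $\T(U^p)_{\fm}$ then follows as a quotient of the noetherian ring $R_{\overline{r}}^{\mathrm{univ}}$. Replacing your subring argument by this surjection fixes the gap, and the rest of your proof would then be in line with the reference.
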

\begin{proof}
Similar as Proposition 5.7 and Proposition 5.8 of \cite{Scholze}
\end{proof}
Let $\chi\colon G_F\to\cO^{\times}$ be a continuous character such that $\chi\equiv1\pmod\varpi$. Assume $\chi|_{U_v\cap F_v^{\times}}$ is trivial when $v\nmid p$. We also view $\chi$ as a continuous character $\chi\colon(\A_F)^{\times}/ F^{\times} \to\cO^{\times}$ via the global Artin map $\phi\colon (\A_F)^{\times}/ F^{\times} \to G_F^{\rm ab}$.
\begin{lemma}\label{completedcohomologytwist}
There is a $(B\otimes_{\Q} \Q_p)^{\times}\times G_F$-equivariant isomorphism
\[
\begin{aligned}
\a\colon\widehat{H}^1_{\psi}(U^{p},\cO)_{\fm}\otimes(\chi\circ\det\boxtimes\chi)\cong\widehat{H}^1_{\psi\chi^2}(U^{p},\cO)_{\fm}
\end{aligned}
\]
such that for each finite place $v\notin S$, 
\[
T_v\circ\a=\chi(\varpi_v)\a \circ T_v,\; S_v\circ\a=\chi(\varpi_v)^2\a\circ S_v.
\]
\end{lemma}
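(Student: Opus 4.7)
The plan is to construct $\alpha$ by twisting the \'etale coefficient sheaves on the Shimura curve $X_U$ by an appropriate rank-one local system associated to $\chi$, and to identify this local system with the pullback from $\Spec F$ via global class field theory. I would work at finite levels $U = U^p U_p$ modulo $\varpi^m$ and pass to the limit.

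For each integer $m \ge 1$, let $\chi_m := \chi \bmod \varpi^m$, a finite-order character of $G_F$. Via the global Artin map it also defines a finite-order Hecke character, which I denote by the same symbol, so that composing with $\det$ yields $\chi_m \circ \det \colon (B\otimes_F\A_F^\infty)^\times \to (\cO/\varpi^m)^\times$. After possibly shrinking $U_p$ so that $\chi_m \circ \det$ is trivial on $U$, this character defines an \'etale local system $\mathcal{L}_m$ on $X_U$ whose complex-analytic realization is
$$\mathcal{L}_m(\C) = B^\times \backslash \bigl((B\otimes_F\A_F^\infty)^\times \times (\bP^1(\C) \setminus \bP^1(\R)) \times \cO/\varpi^m\bigr)/U,$$
where $U$ acts on $\cO/\varpi^m$ via $(\chi_m \circ \det)^{-1}$. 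Multiplication by $\chi_m \circ \det$ on chosen representatives gives a tautological $(B\otimes_F \A_F^\infty)^\times$-equivariant identification that takes the $\psi$-part of $H^1_{\et}((X_U)_{\overline{F}}, \mathcal{L}_m)$ to the $\psi\chi_m^2$-part of $H^1_{\et}((X_U)_{\overline{F}}, \cO/\varpi^m)$, up to a simultaneous shift of the $(B\otimes_F\A_F^\infty)^\times$-action by $\chi_m \circ \det$.

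The crucial ingredient is that $\mathcal{L}_m$ is canonically isomorphic to the pullback from $\Spec F$ of the Galois local system $(\cO/\varpi^m)(\chi_m)$. This is global class field theory: $\chi_m \circ \det$ factors through the reduced norm $(B\otimes_F\A_F^\infty)^\times \to (\A_F^\infty)^\times$, and the induced character on $(\A_F^\infty)^\times/F^\times$ corresponds under Artin reciprocity to $\chi_m$ on $G_F$. Granting this, I obtain
$$H^1_{\et}((X_U)_{\overline{F}}, \mathcal{L}_m) \cong H^1_{\et}((X_U)_{\overline{F}}, \cO/\varpi^m) \otimes \chi_m$$
as $(B\otimes_F\A_F^\infty)^\times \times G_F$-modules. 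Combining with the previous identification, localising at $\fm$, and passing to the limit over $m$ and $U_p$ produces the desired $\alpha$.

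Hecke compatibility at $v \notin S$ is then immediate: $T_v$ is attached to a double coset with determinant $\varpi_v$, so the twist by $\chi\circ\det$ scales its eigenvalues by $\chi(\varpi_v)$; similarly $S_v$ acquires the factor $\chi(\varpi_v)^2$. The main obstacle is the class field theory identification of $\mathcal{L}_m$ with a pullback from $\Spec F$: one must carefully match the Artin map conventions (arithmetic versus geometric Frobenius, ramification of $\chi$) with the natural geometric action on $\mathcal{L}_m$, and track the connected components of $(X_U)_{\overline{F}}$ --- which are indexed via the reduced norm by a narrow ray class group of $F$ --- to ensure $\mathcal{L}_m$ genuinely descends to a pullback from $\Spec F$ rather than merely trivialising on each geometric component.
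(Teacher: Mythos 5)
Your proposal is correct and takes essentially the same route as the paper: both arguments reduce to showing that the rank-one local system $\cF_{\chi\circ\det/\varpi^n}$ on $X_U$ (attached to $\chi$ composed with the reduced norm) is geometrically constant with Galois structure $\chi$, which the paper encodes by computing $H^0((X_U)_{\overline{F}},\cF_{\chi\circ\det/\varpi^n})\cong\Ind_{G_{F_U}}^{G_F}(\chi/\varpi^n)$ and cupping with a distinguished $\chi$-eigenvector $\a_{n,U_p}$, while you phrase it directly as an isomorphism of the sheaf with the pullback of $\chi$ from $\Spec F$. The class-field-theory/connected-component bookkeeping you flag as ``the main obstacle'' is exactly the point the paper offloads to the argument of \cite[Lemma 2.3]{BDJ}, so your identification of the key step is accurate.
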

\begin{proof}
We denote by $F_U:=H^0(X_U,\cO_{X_U})$, which is a finite abelian extension of $F$ (see \cite[\S1.2]{MR703228}). Let $\cF_{\chi\circ\det/\varpi^n}$ be the sheaf on $X_U$ associated to the representation $U\xrightarrow{v} (\A_F^{\infty})^{\times}\xrightarrow{\chi}\cO/\varpi^n$. Using the same argument as \cite[Lemma 2.3]{BDJ}, one can show
 \begin{equation}\label{H=ind}
 H^0((X_U)_{\overline{F}},\cF_{\chi\circ\det/\varpi^n})\cong \Ind_{G_{F_U}}^{G_F}(\chi/\varpi^n)
 \end{equation}
 as $G_F$-representations. Let $v\colon (B\otimes_F\A_F^{\infty})^{\times}\to (\A_F^{\infty})^{\times}$ be the reduced norm. For $g\in (B\otimes_{F}A^{\infty}_F)^{\times}$, the diagram 
\[
\begin{tikzcd}
{H^0((X_U)_{\overline{F}},\cF_{\chi\circ\det/\varpi^n})} \arrow[r] \arrow[d, "g^*"] &  \Ind_{G_{F_U}}^{G_F}(\chi/\varpi^n) \arrow[d,"\phi(v(g))^*"] \\
{H^0((X_{gUg^{-1}})_{\overline{F}},\cF_{\chi\circ\det/\varpi^n})} \arrow[r]  &  \Ind_{G_{F_{U}}}^{G_F}(\chi/\varpi^n) 
\end{tikzcd}
\]
commutes. Let $\mathbf{1}_{n,U}\in\Ind_{G_{F_U}}^{G_F}(\cO/\varpi^n)$ be the constant function with value $1\in\cO/\varpi^n$. Suppose $\a_{n,U_p}\in H^0((X_{U_pU^p})_{\overline{F}},\cF_{\chi\circ\det/\varpi^n})$ corresponds to 
\[
\chi\otimes\mathbf{1}\in\chi\otimes (\Ind_{G_{F_U}}^{G_F}\cO/\varpi^n)\cong(\Ind_{G_{F_U}}^{G_F}\chi/\varpi^n)
\] 
under isomorphism (\ref{H=ind}). Then cupping with $\{\a_{n,U_p}\}$ defines the desired isomorphism.\end{proof}

\subsection{Globalization}\label{globalization}
We start with a continuous representation $\overline{\rho}\colon G_{\Q_p}\to\GL_2(\F)$. By Proposition 8.1 of  \cite{DPS-Crelle}, there exist a totally real field $F$ and a regular algebraic cuspidal automorphic weight $0$ representation $\pi$ of $\GL_2(\A_F)$ such that the associated Galois representation $r\colon G_F\to \GL_2(\overline{\Q}_{p})$ satisfies:
\hfill\begin{enumerate}
\item $r$ is unramified outside $\Sigma_p$;
\item $p$ splits completely in $F$ and $\overline{r}|_{G_{F_v}}\cong\overline{\rho}$ for all $v\mid p$;
\item $\SL_2(\F)\subset \overline{r}(G_F)\subset\GL_2(\F)$;
\item $[F:\Q]$ is even.
\end{enumerate}

Let $U_{\rm max}=\prod_{v\nmid \infty}\GL_2(\cO_{F_v})$. Let $N$ be the product of the orders of the groups $(U_{\rm max}(\A_F^{\infty})^{\times}\cap t_i^{-1}(B')^{\times}t)/F^{\times}$, where $t_i$ runs through a (finite) set of  representatives of the quotient $(B')^{\times}\backslash(B'\otimes_F\A^{\infty}_F)^{\times}/(\A_F^{\infty})^{\times}U_{\rm max}$. According to \cite[Proposition 8.2]{DPS-Crelle}, there is a finite place $w_1$ of $F$ with the following properties:
\hfill\begin{enumerate}
\item $\mathbf{N}w_1\not\equiv 1\mod p$;
\item the ratio of the eigenvalues of $\overline{r}(\mathrm{Frob}_{w_1})$ is not equal $(\mathbf{N}w_1)^{\pm1}$ or $1$;
\item $\mathbf{N}w_1$ is prime to $2pN$.
\end{enumerate}

Let $U_{w_1}$ be the subgroup of $\GL_2(\cO_{F_{w_1}})$ consisting of elements that are upper-triangular and unipotent modulo $\varpi_{w_1}$ and let $U_v=\GL_2(\cO_{F_v})$ for $v\neq w_1$. Then by \cite[Lemma 3.2]{pavskunas20162}, $U=\prod_{v\nmid\infty}U_v$ is small enough such that for all $t\in (B'\otimes_F\A^{\infty}_F)^{\times}$, (\ref{small}) holds.

\begin{lemma}
The Galois representation $\overline{r}\colon G_F\to \GL_2(\F)$ is modular, i.e., 
\[
S(U^p,\F)_{\fm}\neq0.
\]
\end{lemma}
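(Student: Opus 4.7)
The plan is to transfer $\pi$ via Jacquet--Langlands to a cuspidal automorphic representation $\pi'$ of $(B')^{\times}(\A_F)$, exhibit an integral $\T^{S,\mathrm{univ}}$-eigenform in $S(U^p,\cO)$ realising the eigensystem determined by $\overline{r}$, and reduce modulo $\varpi$ to land in $S(U^p,\F)_\fm$.

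Since $\pi$ is regular algebraic cuspidal of weight $0$ and $B'$ is definite, ramified exactly at the infinite places of $F$, the Jacquet--Langlands correspondence produces such a $\pi'$ with $\pi'_v\cong \pi_v$ at every finite place $v$ (under the fixed isomorphisms $\cO_{B'_v}\cong M_2(\cO_{F_v})$) and with $\pi'_\infty$ the trivial representation of $\prod_{v\mid\infty}(B'\otimes_{F,v}\R)^{\times}$. Because $\pi$ is unramified outside $\Sigma_p$, so is $\pi'$; combined with $U_v=\GL_2(\cO_{F_v})$ for $v\notin \Sigma_p\cup\{w_1\}$ and $U_{w_1}\subset \GL_2(\cO_{F_{w_1}})$, this gives $(\pi'^\infty)^{U^p}\neq 0$. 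As $\pi'_\infty$ is trivial, a form in $\pi'$ is a locally constant $\C$-valued function on $(B')^{\times}\backslash (B'\otimes_F\A_F^{\infty})^{\times}/U^p$; fixing $\iota\colon\C\simto\overline{\Q}_p$ yields an embedding $(\pi'^\infty)^{U^p}\hookrightarrow S(U^p,\overline{\Q}_p)$ intertwining $\T^{S,\mathrm{univ}}$.

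Pick any nonzero $\T^{S,\mathrm{univ}}$-eigenvector $f_0$ in the image; by local--global compatibility its eigenvalues are $T_v\mapsto \Tr(r(\Frob_v))$ and $\mathbf{N}(v)S_v\mapsto \det(r(\Frob_v))$ for $v\notin S$. Enlarging $E$ if necessary, pick an open compact $U_p\subset G_p$ fixing $f_0$; then $f_0\in S(U^p U_p,\overline{\Q}_p)$, and after rescaling we may arrange $f_0\in S(U^p U_p,\cO)\setminus \varpi S(U^p U_p,\cO)$. Reducing modulo $\varpi$ gives a nonzero $\overline{f_0}\in S(U^p U_p,\F)\subset S(U^p,\F)$ annihilated by $\fm=\fm_{\overline{r}}$, so $S(U^p,\F)_{\fm}\neq 0$.

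The only mildly technical point is the primitivity/integrality step. This is standard: the finiteness of the double coset $(B')^{\times}\backslash (B'\otimes_F\A_F^{\infty})^{\times}/U^p U_p$ (a consequence of compactness of $(B'\otimes_\Q\R)^{\times}/\R^{\times}$ together with strong approximation on the kernel of the reduced norm) makes $S(U^p U_p,\cO)$ a free $\cO$-module of finite rank on which $\T^{S,\mathrm{univ}}$ acts $\cO$-linearly, so $f_0$ may be scaled to be primitive in this lattice and its mod-$\varpi$ reduction is then automatically nonzero with the prescribed Hecke eigenvalues.
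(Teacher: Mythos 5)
Your argument is correct and supplies the standard proof that the paper itself defers to a citation (Proposition 8.4 of \cite{DPS-Crelle}): transfer the regular algebraic cuspidal automorphic representation $\pi$, which is discrete series at all archimedean places, to the definite quaternion algebra $B'$ via Jacquet--Langlands; use that $\pi$ (hence $\pi'$) is unramified outside $\Sigma_p$ and that $U_v=\GL_2(\cO_{F_v})$ for $v\notin\Sigma_p\cup\{w_1\}$ with $U_{w_1}\subset\GL_2(\cO_{F_{w_1}})$ to produce a nonzero $U^p$-fixed vector, land it in the finite free $\cO$-lattice $S(U^pU_p,\cO)$, normalize it to be primitive, and reduce modulo $\varpi$; local--global compatibility identifies the residual eigensystem with the one cut out by $\fm_{\overline r}$.

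One small slip: the finiteness of $(B')^{\times}\backslash(B'\otimes_F\A_F^{\infty})^{\times}/U^pU_p$ is not a consequence of strong approximation for the norm-one group. Since $B'$ is totally definite, $(B'^{1})_{\infty}$ is compact, so strong approximation for $B'^1$ relative to the archimedean places fails; instead the finiteness follows from compactness of $B'^{\times}(F)\backslash B'^{\times}(\A_F)/\A_F^{\times}$ for the anisotropic group $B'^{\times}/\mathbb{G}_m$ (Borel--Harish-Chandra, or Fujisaki's lemma), together with compactness of $(B'\otimes_{\Q}\R)^{\times}/\R^{\times}$ and openness of $U^pU_p$. This does not affect the conclusion or any other step.
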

\begin{proof}
See Proposition 8.4 of \cite{DPS-Crelle}.
\end{proof}
Let $\psi\colon G_F\to \cO^{\times}$ the character such that $\psi\e^{-1}=\det r$. We also view $\psi$ as a continuous character $\psi\colon(\A_F^{\infty})^{\times}/ F^{\times} \to\cO^{\times}$ via the global Artin map.

There exists an irreducible $E$-representation $\sigma_{v}$ of $\GL_2(\F_p)$ such that its modulo $\varpi$ reduction contains one of $W(\overline{r}|_{G_{F_v}})$ (see Remark \ref{Serreweights}) as a subquotient, see \cite[Lemma 4.4.1]{gee2011automorphic} for a precise choice. Let $\sigma^0_{v}$ be a $\GL_2(\F_p)$-invariant lattice of $\sigma_{v}$. Then $\sigma^0_{v}$ has a central character $\z_{v}\colon \Z_p^{\times}\to\cO^{\times}$ and clearly $\z_{v}\equiv \psi|_{\cO_{F_v}^{\times}}\pmod \varpi$. There exists a character $\theta_{v}\colon \Z_p^{\times}\to\cO^{\times}$ such that $\z_{v}\theta_{v}^2= \psi|_{\cO_{F_v}^{\times}}$. We take $\l_{v}=\sigma^0_{v}\otimes(\theta_{v}\circ\det)$ and $\l^{\p}=\otimes_{v\mid p, v\neq\p}\l_{v}$. Then $S_{\psi,\l^{\p}}(U^{\p},\cO)_{\fm}\neq0$ by our choice of $(\psi,\l)$. 

\subsection{Galois deformation rings}

We write $\overline{r}_v$ for the restriction of $\overline{r}$ to $G_{F_v}$ and write $\psi_{v}$ for $\psi|_{G_{F_v}}$. Let $R_v^{\Box,\psi\e^{-1}}$ be the universal framed deformation ring corresponding to liftings of $\overline{r}_v$ with determinant $\psi_{v}\e^{-1}$. 
 Let $R^{\Box,\psi\e^{-1}}_{\Sigma_p}=\widehat{\otimes}_{\cO}R_{v}^{\Box,\psi\e^{-1}}$ for $v$ running over elements in $\Sigma_p$. 
 
Let $\sigma$ be a finite dimensional irreducible $E$-representation of $K:=\GL_2(\Z_p)$ with central character $\z|_{\Z_p^{\times}}$. We say $\s$ is a locally algebraic type if 
\[\sigma\simeq \Sym^bE^2\otimes{\rm det}^a\otimes\sigma(\tau)\]
 for some $a\in\Z$, $b\in\Z_{\ge0}$ and for some inertial type $\tau\colon I_{\Q_p}\to\GL_2(E)$. Here $\sigma(\tau)$ is the smooth irreducible representation of $\GL_2(\Z_p)$ attached to $\tau$ by the inertial local Langlands correspondence. For $v\in\Sigma_p$, we also write $R_{v}^{\Box,\psi\e^{-1}}(\sigma)$ for the maximal $\varpi$-torsion free reduced quotient of $R_{v}^{\Box,\psi\e^{-1}}$ parametrizing potentially semi-stable liftings of $\overline{r}_v$ with Hodge--Tate weights $(1-a,-a-b)$ and having inertial type $\tau$.
  
 Let $\s'$ be a finite dimensional irreducible $E$-representation of $\cO_D^{\times}$. We say $\s'$ is a locally algebraic type if 
\[\sigma'\simeq \Sym^bE^2\otimes{\rm Nrd}^a\otimes\sigma'(\tau)\]
  for some $a\in\Z$, $b\in\Z_{\ge0}$ and for some discrete series inertial type $\tau\colon I_{\Q_p}\to\GL_2(E)$. Here $\sigma'(\tau)$ (up to a conjugation of $\varpi_D$) is the smooth irreducible representation of $\cO_D^{\times}$ attached to $\tau$ by \cite[Theorem 3.3]{Gee-Geraghty}. We also write $R_{v}^{\Box,\psi\e^{-1}}(\sigma')$ for the maximal $\varpi$-torsion free reduced quotient of $R_{v}^{\Box,\psi\e^{-1}}$ parametrizing potentially semi-stable liftings of $\overline{r}_v$ with Hodge--Tate weights $(1-a,-a-b)$ and having inertial type $\tau$.

  Let $Q$ be a set of finite places of $F$ disjoint from $S$. We write $S_{Q}=S\cup Q$. Let $G_{F,S_{Q}}$ be the Galois group of the maximal extension of F unramified outside $S_{Q}$. Let $R^{\Box,\psi\e^{-1}}_{F,Q}$ be the universal $\Sigma_p$-framed deformation ring of $\overline{r}:G_{F,S_{Q}}\to \GL_2(\F)$ with fixed determinant $\psi\e^{-1}$, i.e., $R^{\Box,\psi\e^{-1}}_{F,Q}$ represents the functor assigning to a local Artinian $\cO$-algebra $A$ the set of isomorphism classes of tuples $\{V_A, \b_v\}_{v\in\Sigma_p}$, where $V_A$ is a deformation of $V_{\F}$ to $A$ with determinant $\psi\e^{-1}$ and
$\b_v$ is a lifting of the chosen basis of $V_{\F}$ to an $A$-basis of $V_A$. See also \cite[Proposition 2.2.9]{MR2470687}. 
  
There is a canonical $\cO$-algebra homomorphism
 \[
 R^{\Box,\psi\e^{-1}}_{\Sigma_p}\to R^{\Box,\psi\e^{-1}}_{F,Q}
  \]
by sending a tuple $\{V_A, \b_v\}_{v\in\Sigma_p}$ to $(V_A,\b_v)$, for each $v\in\Sigma_p$.
\begin{proposition}\label{TWprimes}
Let $r:=\dim_{\F} H^1(G_{F,S}, ({\rm ad}^0\overline{r})(1))$. For each positive integer $N$, there exists a finite set $Q_N$ of finite primes of $F$ satisfying
\begin{enumerate}
\item $Q_N$ is disjoint from $S$ and $|Q_N|\equiv r$.
\item  If $v\in Q_N$, then $\mathbf{N}v\equiv1\pmod{p^N}$.
\item If $v\in Q_N$, then $\overline{r}({\rm Frob}_v)$ has distinct eigenvalues.
\item Set $g=r-[F\colon Q]+|\Sigma_p|-1$, then $R^{\Box,\psi\e{-1}}_{F,Q_N}$ is topologically generated over $R^{\Box,\psi\e^{-1}}_{\Sigma_p}$ by $g$ elements. In particular $g\ge0$.
\end{enumerate}
\end{proposition}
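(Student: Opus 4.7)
The plan is to apply the Taylor--Wiles--Kisin method in three steps: produce enough auxiliary primes via a Chebotarev argument, choose $Q_N$ so as to kill the dual Selmer group, and then use the Wiles--Greenberg duality formula to count relative generators. The condition $\mathbf{N}v\equiv 1\pmod{p^N}$ is equivalent to $v$ splitting completely in $F(\mu_{p^N})$, and ``$\overline{r}(\mathrm{Frob}_v)$ has distinct eigenvalues'' is a Zariski-open condition on conjugacy classes in $\overline{r}(G_F)$. Setting $L:=\overline{F}^{\ker\overline{r}}$, we have $\SL_2(\F)\subset\mathrm{Gal}(L/F)$; since $p\ge 5$ the group $\SL_2(\F)$ is perfect, so $L$ and $F(\mu_{p^N})$ are linearly disjoint over $F$. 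A regular semisimple element of $\SL_2(\F)$ paired with any element of $\mathrm{Gal}(F(\mu_{p^N})/F)$ produces a non-empty conjugacy class in $\mathrm{Gal}(L(\mu_{p^N})/F)$, and Chebotarev then supplies infinitely many unramified primes $v$ satisfying conditions (2) and (3).

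\medskip

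I would then build $Q_N$ inductively. Let the Selmer system $\mathcal{L}_Q$ impose the full $H^1$ at places in $\Sigma_p$, the Taylor--Wiles local condition at places in $Q$, unrestricted ramification at the auxiliary place $w_1$, and trivial ramification elsewhere. At each step, given a non-zero class $\phi$ in the current dual Selmer group $H^1_{\mathcal{L}^\perp_Q}(G_{F,S_Q}, (\mathrm{ad}^0\overline{r})(1))$, I find a Taylor--Wiles prime $v$ such that $\phi|_{G_{F_v}}\ne 0$ in $H^1(G_{F_v}, (\mathrm{ad}^0\overline{r})(1))$; adjoining $v$ to $Q$ decreases the dual Selmer dimension by one. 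That the combined Chebotarev condition is non-empty uses the absolute irreducibility of $\mathrm{ad}^0\overline{r}$ (a consequence of $\SL_2(\F)\subset\overline{r}(G_F)$) to arrange linear disjointness of the splitting field of $\phi$ from $L(\mu_{p^N})$. After exactly $r$ iterations we obtain $Q_N$ of size $r$ with vanishing dual Selmer group.

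\medskip

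Finally, the number of topological generators of $R^{\Box,\psi\e^{-1}}_{F,Q_N}$ over $R^{\Box,\psi\e^{-1}}_{\Sigma_p}$ equals the dimension of the relative tangent space, which fits into an exact sequence involving the global Selmer $H^1_{\mathcal{L}_{Q_N}}(G_{F,S_{Q_N}},\mathrm{ad}^0\overline{r})$, the local $\prod_{v\in\Sigma_p}H^1(G_{F_v},\mathrm{ad}^0\overline{r})$, and the framing contributions $\sum_{v\in\Sigma_p}(3-h^0(G_{F_v},\mathrm{ad}^0\overline{r}))$. The Wiles--Greenberg formula applied to $\mathrm{ad}^0\overline{r}$ over the totally real field $F$, combined with the global Euler characteristic $\chi(G_{F,S_{Q_N}},\mathrm{ad}^0\overline{r})=-2[F:\mathbb{Q}]$ and the vanishing of the dual Selmer group, simplifies this count to exactly $r-[F:\mathbb{Q}]+|\Sigma_p|-1=g$; non-negativity of $g$ drops out of the same calculation together with the absolute irreducibility of $\overline{r}$. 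The main obstacle is the inductive Chebotarev step for killing cohomology classes: one must simultaneously arrange the Taylor--Wiles conditions on $\mathrm{Frob}_v$ and the non-triviality of $\phi|_{G_{F_v}}$ in a yet larger Galois extension, which requires the large image hypothesis $\SL_2(\F)\subset\overline{r}(G_F)$ together with $p\ge 5$ in an essential way.
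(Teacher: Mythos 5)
Your proposal reconstructs, in outline, the standard Taylor--Wiles--Kisin argument, whereas the paper's proof consists of a single citation to \cite[(3.2.3), (3.2.5)]{MR2600871} (Kisin, \emph{Moduli of finite flat group schemes, and modularity}), together with the observation that the running hypotheses on $\overline{r}$ and $S$ meet Kisin's conditions. So you are unpacking the content of that citation; the approach is the same, and the structure of your argument (Chebotarev + inductive annihilation of the dual Selmer group + Wiles--Greenberg duality) is the correct one.

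Two points in your sketch are imprecise and would need tightening to give a complete proof. First, $L := \overline{F}^{\ker\overline{r}}$ and $F(\mu_{p^N})$ need not be linearly disjoint over $F$: since $\det\overline{r} \equiv \overline{\psi}\,\omega^{-1}$, the field $L$ can contain a nontrivial cyclotomic subextension. What is true, and what the argument actually uses, is that $\SL_2(\F)\subset\mathrm{Gal}(L/F)$ is perfect (here $p\ge 5$), so the restriction map $\mathrm{Gal}(L(\mu_{p^N})/F)\to\mathrm{Gal}(F(\mu_{p^N})/F)$ kills $\SL_2(\F)$; combined with the existence of regular semisimple elements in $\SL_2(\F)$ this still supplies the Chebotarev classes needed for (2) and (3). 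Second, the step ensuring $\phi|_{G_{L(\mu_{p^N})}}\neq 0$ is not a consequence of the absolute irreducibility of $\mathrm{ad}^0\overline{r}$ alone. By inflation-restriction, what one must verify is the vanishing $H^1\big(\mathrm{Gal}(L(\mu_{p^N})/F),\,(\mathrm{ad}^0\overline{r})(1)\big)=0$, a group-cohomological computation (as in Darmon--Diamond--Taylor \S2.7 or the original Taylor--Wiles argument) which genuinely requires $p\ge 5$ and the containment $\SL_2(\F)\subset\overline{r}(G_F)$; irreducibility of $\mathrm{ad}^0\overline{r}$ is necessary but not sufficient. You do flag $p\ge 5$ and the large-image hypothesis as the essential inputs, so you have located the crux correctly, but the precise lemma being invoked is the $H^1$-vanishing, not a linear-disjointness claim.
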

\begin{proof}
All the conditions of \cite[(3.2.3)]{MR2600871} are satisfied by our choices of $\overline{r}$ and $S$, so we can refer to \cite[Proposition (3.2.5)]{MR2600871}.
\end{proof}
\subsection{Patching}If  $ v\notin \Sigma_p\cup Q_N$ is a finite place of $F$, let $U(N)_v: =U_v$.
If $v\in Q_N$,
we let
\[
U(N)_v=\big\{\begin{pmatrix}a&b\\c&d\end{pmatrix}\in\GL_2(\cO_{F_v})| c\equiv0\pmod w,\,  ad^{-1}\mapsto 1\in \Delta(N)_v\big\},
\]
where $\Delta(N)_v$ is the maximal $p$-power quotient of $k_v^{\times}$. Define
\[U(N)^{p}:=\prod_{v\notin \Sigma_p} U(N)_v\subset (B'\otimes_F\A_F^{p,\infty})^{\times}.\]

The Hecke algebra $\T^{\rm univ}_{S_{Q_N},\cO}=\cO[T_v, S_v, W_w]_{v\not\in S_{Q_N}, w\in Q_N}$ acts on $H^1(X_{U(N)^{p}U_{p}}\times_F\overline{F},\cO)$ for all open compact subgroups $U_p\subset(B\otimes_{\Q}\Q_{p})^{\times}$ in the usual way. We denote by $\T(U(N)^{p}U_{p})$ the image of $\T^{\rm univ}_{S_{Q_N},\cO}$ in $\End_{\cO}\big(H^1(X_{U(N)^{p}U_{p}}\times_F\overline{F},\cO)\big)$. For each $w\in Q_N$, we choose an eigenvalue $\alpha_w$ of $\overline{r}(\mathrm{Frob}_w)$. Define 
\[\fm_{Q_N}:=(\varpi, T_v-\mathrm{tr}(\overline{r}(\mathrm{Frob}_v)),\mathbf{N}(v)S_v-\mathrm{det}(\overline{r}(\mathrm{Frob}_v)), U_{\pi_w}-\alpha_w)_{v\not\in S_{Q_N}, w\in Q_N },
\]
which is a maximal ideal of $\T^{\rm univ}_{S_{Q_N},\cO}$. We define
\[
\begin{aligned}
S_{\psi}(U(N)^{p},\cO)_{\fm_{Q_N}}&:=\varprojlim_{s}\varinjlim_{U_{p}}S_{\psi}(U(N)^{p}U_{p},\cO/\varpi^s)_{\fm_{Q_N}},\\
S_{\psi,\l^{\p}}(U(N)^{\p},\cO)_{\fm_{Q_N}}&:=\Hom_{U_p^{\p}}((V_{\l_{\p}})^d,S_{\psi}(U(N)^p,\cO)_{\fm_{Q_N}}),\\
\widehat{H}^1(U(N)^{p},\cO)_{\fm_{Q_N}}&:=\varprojlim_n\varinjlim_{U_{p}}H^1_{\et}((X_{U(N)^{p}U_{p}})_{\overline{F}},\cO/\varpi^n)_{\fm_{Q_N}},\\
\widehat{H}^1_{\psi,\lambda^{\p}}(U(N)^{\frak{p}},\cO)_{\fm_{Q_N}}&:=\Hom_{U_p^{\p}}((V_{\l^{\p}})^d,\widehat{H}^1_{\psi}(U(N)^{p},\cO)_{\fm_{Q_N}}),\\
\T(U(N)^{p})_{\fm_{Q_N}}&:=\varprojlim_{U_{p}}\T(U(N)^{p}U_{p})_{\fm_{Q_N}}.
\end{aligned}
\]
\begin{proposition}
There is a unique (up to conjugation) continuous $2$-dimensional Galois representation 
\[r_{\fm_{Q_N}}\colon G_{F,S_{Q_N}}\to\GL_2(\T(U(N)^p)_{\fm_{Q_N}})\]
 unramified outside $S_{Q_N}$, such that for all $v\notin S_{Q_N}$,
 \[
 {\rm tr}(r_{\fm_{Q_N}}({\rm Frob}_v)) = T_v, \; \det(r_{\fm_{Q_N}}({\rm Frob}_v)) = \mathbf{N}(v)S_v.
 \]
 The ring $\T(U(N)^{p})_{\fm_{Q_N}}$ is a complete noetherian local ring with finite residue field. The $\T(U(N)^{p})_{\fm_{Q_N}}[G_{F,S_{Q_N}}]$-module $\widehat{H}^1(U(N)^{p},\cO)_{\fm_{Q_N}}$ is $r_{\fm_{Q_N}}$-typic (see Definition 5.2 of \cite{Scholze}).
\end{proposition}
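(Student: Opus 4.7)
The plan is to mimic the argument of \cite[Propositions 5.7 and 5.8]{Scholze}, which treat the case $Q_N = \emptyset$, and verify that introducing the extra Taylor--Wiles level at the primes of $Q_N$ poses no new difficulty. The key inputs (finiteness of each finite-level Hecke algebra, classical local-global compatibility for automorphic forms on the quaternionic Shimura curves $X_{U(N)^p U_p}$, and absolute irreducibility of $\overline{r}$) all persist after enlarging the level from $U$ to $U(N)$.

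First I would construct $r_{\fm_{Q_N}}$ by interpolation. For each sufficiently small open compact subgroup $U_p \subset \GL_2(\Q_p)$, the Hecke algebra $\T(U(N)^p U_p)_{\fm_{Q_N}}$ is a finite $\cO$-algebra acting on the finitely generated $\cO$-module $H^1_\et((X_{U(N)^p U_p})_{\overline{F}}, \cO)_{\fm_{Q_N}}$. Classical local-global compatibility (Carayol, Deligne) attaches to every closed point of $\Spec \T(U(N)^p U_p)_{\fm_{Q_N}}[1/p]$ a continuous semisimple two-dimensional Galois representation $r_x\colon G_{F, S_{Q_N}} \to \GL_2(\overline{\Q}_p)$ satisfying the expected trace and determinant formulas at unramified primes. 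The corresponding pseudo-characters patch, via Zariski density of the classical points, into a pseudo-character valued in $\T(U(N)^p U_p)_{\fm_{Q_N}}$. Absolute irreducibility of $\overline{r}$ (which follows from $\SL_2(\F) \subset \overline{r}(G_F)$) then allows one to lift this pseudo-character uniquely to a genuine Galois representation (Nyssen--Rouquier); taking the inverse limit over $U_p$ produces $r_{\fm_{Q_N}}$. The noetherian local statement for $\T(U(N)^p)_{\fm_{Q_N}}$ is then a formal consequence: the trace map on $r_{\fm_{Q_N}}$ factors through a continuous surjection from the universal framed deformation ring of $\overline{r}$ with appropriate local conditions at $S_{Q_N}$, and such a deformation ring is already a complete noetherian local $\cO$-algebra with finite residue field.

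I expect the typicness assertion to be the main delicate step, and here I would follow \cite[Proposition 5.8]{Scholze} essentially verbatim. The goal is to produce, using the absolute irreducibility of $\overline{r}$ together with the fact that endomorphisms of $r_{\fm_{Q_N}}$ commuting with the Hecke action are scalars, an isomorphism
\[
\widehat{H}^1(U(N)^p, \cO)_{\fm_{Q_N}} \cong r_{\fm_{Q_N}} \otimes_{\T(U(N)^p)_{\fm_{Q_N}}} M
\]
for an admissible $\T(U(N)^p)_{\fm_{Q_N}}$-module $M$ carrying compatible actions of $(B \otimes_\Q \Q_p)^\times$ and of the prime-to-$p$ Hecke operators. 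The main point to check is that the Taylor--Wiles choice of $Q_N$ in Proposition \ref{TWprimes}, which forces $\overline{r}(\Frob_v)$ to have distinct eigenvalues for $v \in Q_N$, makes the local structure at the auxiliary primes behave just as in the unramified setting of Scholze. Since the ordering of factors of the local Galois representation at each $v \in Q_N$ is unambiguous and $\mathbf{N}v \not\equiv 1 \pmod p$ fails but is compensated by the choice of eigenvalue $\alpha_w$ built into $\fm_{Q_N}$, the typicness argument at $S$ extends verbatim to $S_{Q_N}$, which I anticipate to be the only nontrivial verification needed.
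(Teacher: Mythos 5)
Your proposal takes essentially the same route as the paper: the paper's entire proof is the one-line remark that the argument is the same as Propositions 5.7 and 5.8 of Scholze, and you correctly flesh this out via pseudo-characters interpolating classical local-global compatibility, Nyssen--Rouquier lifting via absolute irreducibility of $\overline{r}$, and the typicness argument of Scholze's Proposition 5.8. The only cosmetic slip is your phrasing around the Taylor--Wiles primes (they satisfy $\mathbf{N}v\equiv 1\pmod{p^N}$ by construction, which is what actually happens, but the local contribution at $v\in Q_N$ can genuinely be ramified after raising the level to $U(N)_v$; what saves typicness is absolute irreducibility of $\overline{r}$, not unramifiedness), which does not affect the correctness of the argument.
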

\begin{proof}
Similar as Proposition 5.7 and Proposition 5.8 of \cite{Scholze}
\end{proof}
Let $R_{F,Q_N}$ be the universal deformation ring of $\overline{r}\colon G_{F,S_{Q_N}}\to \GL_2(\F)$. Then $r_{\fm_{Q_N}}$ induces a continuous $\cO$-algebra homomorphism $R_{F,Q_N}\to\T(U(N)^p)_{\fm_{Q_N}}$ which makes $\widehat{H}^1(U(N)^{p},\cO)_{\fm_{Q_N}}$ an $R_{F,Q_N}[G_{F,S_{Q_N}}]$-module. Let $R^{\psi\e^{-1}}_{F,Q_N}$ be the universal deformation ring of $\overline{r}:G_{F,S_{Q_N}}\to \GL_2(\F)$ with fixed determinant $\psi\e^{-1}$ and let $V_{Q_N}$ be the universal deformation. We denote by
\[r_{Q_N}\colon G_{F,S_{Q_N}}\to \GL_2(V_{Q_N})\]
the corresponding representation. Then the $R_{F,Q_N}$-action on $\widehat{H}^1_{\psi,\lambda^{\p}}(U(N)^{\frak{p}},\cO)_{\fm_{Q_N}}$ factors through $R^{\psi\e^{-1}}_{F,Q_N}$. We define
\begin{equation*}
\begin{aligned}
M(N)&:=\big(S_{\psi,\l^{\p}}(U(N)^{\p},\cO)_{\fm_{Q_N}}\big)^d \widehat{\otimes}_{{R^{\psi\e^{-1}}_{F,Q_N}}}{R^{\Box,\psi\e^{-1}}_{F,Q_N}}\\
L(N)&:=\big(\widehat{H}^1_{\psi,\lambda^{\p}}(U(N)^{\frak{p}},\cO)_{\fm_{Q_N}}\big)^d\widehat{\otimes}_{{R^{\psi\e^{-1}}_{F,Q_N}}}{R^{\Box,\psi\e^{-1}}_{F,Q_N}}\\
L'(N)&:=\big(\widehat{H}^1_{\psi,\lambda^{\p}}(U(N)^{\frak{p}},\cO)_{\fm_{Q_N}}[r_{\fm_{Q_N}}]\big)^d\widehat{\otimes}_{{R^{\psi\e^{-1}}_{F,Q_N}}}{R^{\Box,\psi\e^{-1}}_{F,Q_N}},
\end{aligned}
\end{equation*}
where
\[
\widehat{H}^1_{\psi,\lambda^{\p}}(U(N)^{\frak{p}},\cO)_{\fm_{Q_N}}[r_{\fm_{Q_N}}]:=\Hom_{\T(U(N)^p)_{\fm_{Q_N}}[G_{F,S_{Q_N}}]}(r_{\fm_{Q_N}},\widehat{H}^1_{\psi,\lambda^{\p}}(U(N)^{\frak{p}},\cO)_{\fm_{Q_N}}).
\]
By Proposition \ref{TWprimes}, we can and do fix a surjection 
\begin{equation}\label{localsujglobal}
R^{\Box,\psi\e^{-1}}_{\Sigma_p}[\![x_1,\ldots,x_g]\!]\twoheadrightarrow R^{\Box,\psi\e^{-1}}_{F,Q_N}.
\end{equation}

We define a quotient of $R^{\Box,\psi\e^{-1}}_{\Sigma_p}$ by 
\begin{equation*}
R^{\rm loc}:= R_{\p}^{\Box,\psi\e^{-1}}\widehat{\otimes}_{\cO}(\widehat{\otimes}_{v\mid p,v\neq\p}R_v^{\Box,\psi\e^{-1}}(\l_{v})).
\end{equation*}
Denote by $R_{\infty}=R^{\rm loc}[\![x_1,\ldots,x_g]\!]$. Let $\rho_{\p}^{\Box}\colon G_{F_{\p}}\to \GL_2(R_{\p}^{\Box,\psi\e^{-1}})$ be the universal lifting of $\overline{r}_{\p}$. Let $V^{\Box}_{\p}$ be an $R_{\p}^{\Box,\psi\e^{-1}}$-module free of rank $2$ with a basis on which $G_{F_{\p}}$ acts via  $\rho_{\p}^{\Box}$. 
\begin{lemma}\label{Htypic}
The $R^{\Box,\psi\e^{-1}}_{\Sigma_p}[\![x_1,\ldots,x_g]\!]$-actions on $M(N)$, $L(N)$ and $L'(N)$ via (\ref{localsujglobal}) factor through $R_{\infty}$. And there is a canonical (up to scalar) $R_{\infty}[G_{F_{\p}}\times B_{\p}^{\times}]$-equivariant isomorphism 
 \[
 L(N)\cong (\rho_{\p}^{\Box})^*\footnote{If $M\in{\rm Mod}_G(R)$ is finite free over $R$, we write $M^*$ for $\Hom_R(M,R)\in{\rm Mod}_G(R)$, which is also free of the same rank.}\boxtimes_{R^{\Box,\psi\e^{-1}}_{\p}}L'(N).
 \]
\end{lemma}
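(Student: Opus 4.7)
The plan is to treat the two assertions separately, since they are essentially independent.

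\textbf{Step 1: Factoring through $R_{\infty}$.} The only nontrivial issue is at the places $v\in\Sigma_p$ with $v\neq\p$: there one must show that the action of $R_v^{\Box,\psi\e^{-1}}$ on each of $M(N)$, $L(N)$ and $L'(N)$ factors through the type quotient $R_v^{\Box,\psi\e^{-1}}(\lambda_v)$. (At $\p$ the full ring $R_{\p}^{\Box,\psi\e^{-1}}$ is kept, and the auxiliary variables $x_1,\ldots,x_g$ impose no condition.) I would argue by density: characteristic-zero points of $S_{\psi,\lambda^{\p}}(U(N)^{\p},\cO)_{\fm_{Q_N}}$ and of $\widehat{H}^1_{\psi,\lambda^{\p}}(U(N)^{\p},\cO)_{\fm_{Q_N}}$ correspond, via classical local-global compatibility for automorphic forms on $(B')^{\times}$ and $B^{\times}$ respectively, to Galois representations which at $v$ are potentially semi-stable of Hodge--Tate weights $(1-a_v,-a_v-b_v)$ and inertial type $\tau_v$. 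The associated prime ideals of $R_v^{\Box,\psi\e^{-1}}$ therefore lie in $\Spec R_v^{\Box,\psi\e^{-1}}(\lambda_v)$, and they are Zariski dense in the support of each module; the factorization follows. Equivalently, one can appeal to the patched analogue of the classicality criterion used in \cite[Lemma 4.17]{CEGGPS2} together with \cite[Proposition 8.8]{DPS-Crelle}, working $U_p^{\p}$-equivariantly with the prescribed locally algebraic type $V_{\lambda^{\p}}$.

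\textbf{Step 2: The $G_{F_{\p}}\times B_{\p}^{\times}$-equivariant isomorphism.} The $r_{\fm_{Q_N}}$-typicity of $\widehat{H}^1(U(N)^{p},\cO)_{\fm_{Q_N}}$ stated in the proposition preceding the lemma gives a canonical (up to scalar) $\T(U(N)^p)_{\fm_{Q_N}}[G_{F,S_{Q_N}}]$-equivariant isomorphism
\[
\widehat{H}^1(U(N)^{p},\cO)_{\fm_{Q_N}}\cong r_{\fm_{Q_N}}\otimes_{\T(U(N)^p)_{\fm_{Q_N}}}\widehat{H}^1(U(N)^{p},\cO)_{\fm_{Q_N}}[r_{\fm_{Q_N}}].
\]
Passing to the $\psi$-isotypic component for the $(\A_F^{\infty})^{\times}$-action (which is $G_F$-invariant), applying $\Hom_{U_p^{\p}}((V_{\lambda^{\p}})^d,-)$, and taking continuous $\cO$-linear duals all commute with tensoring by the finite free $\T(U(N)^p)_{\fm_{Q_N}}$-module $r_{\fm_{Q_N}}$. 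Restricting the Galois action to $G_{F_{\p}}$ and extending scalars from $R^{\psi\e^{-1}}_{F,Q_N}$ to $R^{\Box,\psi\e^{-1}}_{F,Q_N}$ then yields
\[
L(N)\cong \bigl(r_{\fm_{Q_N}}|_{G_{F_{\p}}}\otimes_{R^{\psi\e^{-1}}_{F,Q_N}}R^{\Box,\psi\e^{-1}}_{F,Q_N}\bigr)^{*}\otimes_{R^{\Box,\psi\e^{-1}}_{F,Q_N}} L'(N).
\]
By the very definition of the framed deformation ring at $\p$, the pushforward in parentheses is a framed deformation of $\overline{r}_{\p}$ with determinant $\psi_{\p}\e^{-1}$, hence is classified by a unique map $R_{\p}^{\Box,\psi\e^{-1}}\to R^{\Box,\psi\e^{-1}}_{F,Q_N}$, under which it is identified with the pullback of $\rho_{\p}^{\Box}$. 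Substituting produces the asserted isomorphism $L(N)\cong (\rho_{\p}^{\Box})^*\boxtimes_{R_{\p}^{\Box,\psi\e^{-1}}} L'(N)$.

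\textbf{Main obstacle.} The genuinely delicate point is Step 1: the density/classicality argument must be made uniform across $M(N)$, $L(N)$ and $L'(N)$, and in particular must handle $L'(N)$, whose defining condition $[r_{\fm_{Q_N}}]$ is cut out by the Galois action on the completed cohomology rather than by an obvious locally algebraic structure. One needs to check that enough classical points survive this $r_{\fm_{Q_N}}$-typic truncation; this is precisely where the Taylor--Wiles framing and the compatibility between the Hecke-theoretic and deformation-theoretic supports come into play. Once Step 1 is established, Step 2 is essentially formal, the only nontrivial verification being the identification of the base-changed $r_{\fm_{Q_N}}|_{G_{F_{\p}}}$ with the pullback of the universal framed lift $\rho_{\p}^{\Box}$.
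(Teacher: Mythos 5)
Your overall strategy matches the paper's: exploit the $r_{\fm_{Q_N}}$-typicity of completed cohomology, track it through the functors defining $L(N)$ and $L'(N)$, and identify the rank-two tensor factor with the pullback of $\rho_{\p}^{\Box}$ via the framing at $\p$. Step~1 is indeed what the paper dispatches in one line as local-global compatibility.

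However, the displayed formula
\[
L(N)\cong \bigl(r_{\fm_{Q_N}}|_{G_{F_{\p}}}\otimes_{R^{\psi\e^{-1}}_{F,Q_N}}R^{\Box,\psi\e^{-1}}_{F,Q_N}\bigr)^{*}\otimes_{R^{\Box,\psi\e^{-1}}_{F,Q_N}} L'(N)
\]
does not make sense as written: $r_{\fm_{Q_N}}$ is a free rank-two module over the Hecke algebra $\T(U(N)^p)_{\fm_{Q_N}}$, so its base change along $R^{\psi\e^{-1}}_{F,Q_N}\to R^{\Box,\psi\e^{-1}}_{F,Q_N}$ is a module over $\T(U(N)^p)_{\fm_{Q_N}}\otimes_{R^{\psi\e^{-1}}_{F,Q_N}}R^{\Box,\psi\e^{-1}}_{F,Q_N}$, and is not visibly free of rank two over $R^{\Box,\psi\e^{-1}}_{F,Q_N}$. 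The step you are skipping --- which is the actual content of the paper's argument --- is the passage from $r_{\fm_{Q_N}}$-typicity over the Hecke algebra to $r_{Q_N}$-typicity over the deformation ring $R^{\psi\e^{-1}}_{F,Q_N}$. Since $r_{\fm_{Q_N}}$ is the pushforward of the universal deformation $r_{Q_N}$ on $V_{Q_N}$ and both are free of rank two, the Galois-invariants $[r_{\fm_{Q_N}}]$ and $[r_{Q_N}]$ coincide inside $\widehat{H}^1_{\psi,\lambda^{\p}}(U(N)^{\p},\cO)_{\fm_{Q_N}}$, so that module is $V_{Q_N}$-typic over $R^{\psi\e^{-1}}_{F,Q_N}$ and its dual is $(V_{Q_N})^{*}$-typic. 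Only after this substitution does base change to $R^{\Box,\psi\e^{-1}}_{F,Q_N}$ produce $V^{\Box}_{Q_N}=V_{Q_N}\otimes_{R^{\psi\e^{-1}}_{F,Q_N}}R^{\Box,\psi\e^{-1}}_{F,Q_N}$, a genuine rank-two free $R^{\Box,\psi\e^{-1}}_{F,Q_N}$-module equipped with a canonical (up to scalar) basis from the $\Sigma_p$-framing, which the universal property of $R_{\p}^{\Box,\psi\e^{-1}}$ identifies with $\rho_{\p}^{\Box}\otimes_{R_{\p}^{\Box,\psi\e^{-1}}}R^{\Box,\psi\e^{-1}}_{F,Q_N}$. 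You should insert that passage; the rest of your Step~2 is sound.

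Finally, your ``main obstacle'' is not one. The typicity isomorphism of Step~2 uses only the Galois structure of completed cohomology and does not presuppose Step~1. Establish it first; then since $L(N)\cong(\rho_{\p}^{\Box})^{*}\otimes_{R_{\p}^{\Box,\psi\e^{-1}}} L'(N)$ with $(\rho_{\p}^{\Box})^{*}$ free over $R_{\p}^{\Box,\psi\e^{-1}}$, and since the type quotients $R_v^{\Box,\psi\e^{-1}}(\lambda_v)$ involve only places $v\neq\p$, the $R_v^{\Box,\psi\e^{-1}}$-support of $L'(N)$ for $v\neq\p$ coincides with that of $L(N)$. Your classicality/density argument applied to $M(N)$ and $L(N)$ therefore handles $L'(N)$ for free, exactly as the paper intends.
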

\begin{proof}
The first statement follows from the local-global compatibility. It remains to prove the second. 
Since both $r_{\fm_{Q_N}}$ and $r_{Q_N}$ are free of rank $2$, there is an isomorphism of $R^{\psi\e^{-1}}_{F,Q_N}$-modules 
\[
\widehat{H}^1_{\psi,\lambda^{\p}}(U(N)^{\frak{p}},\cO)_{\fm_{Q_N}}[r_{\fm_{Q_N}}]\cong\widehat{H}^1_{\psi,\lambda^{\p}}(U(N)^{\frak{p}},\cO)_{\fm_{Q_N}}[r_{Q_N}].
\]
Note that $\widehat{H}^1_{\psi,\lambda^{\p}}(U(N)^{\frak{p}},\cO)_{\fm_{Q_N}}$ is also $r_{Q_N}$-typic, i.e., we have
\[
\widehat{H}^1_{\psi,\lambda^{\p}}(U(N)^{\frak{p}},\cO)_{\fm_{Q_N}}\cong V_{Q_N}\otimes_{R^{\psi\e^{-1}}_{F,Q_N}}\widehat{H}^1_{\psi,\lambda^{\p}}(U(N)^{\frak{p}},\cO)_{\fm_{Q_N}}[V_{Q_N}].
\]
Therefore we have
\[
\widehat{H}^1_{\psi,\lambda^{\p}}(U(N)^{\frak{p}},\cO)_{\fm_{Q_N}}^d\cong (V_{Q_N})^{*}\otimes_{R^{\psi\e^{-1}}_{F,Q_N}}\big(\widehat{H}^1_{\psi,\lambda^{\p}}(U(N)^{\frak{p}},\cO)_{\fm_{Q_N}}[V_{Q_N}]\big)^d,
\]
and
\[
\begin{aligned}
L(N)&\cong (V_{Q_N})^{*}\otimes_{R^{\psi\e^{-1}}_{F,Q_N}}\big(\widehat{H}^1_{\psi,\lambda^{\p}}(U(N)^{\frak{p}},\cO)_{\fm_{Q_N}}[V_{Q_N}]\big)^d\widehat{\otimes}_{{R^{\psi\e^{-1}}_{F,Q_N}}}{R^{\Box,\psi\e^{-1}}_{F,Q_N}}\\
&\cong (V_{Q_N})^{*}\otimes_{R^{\psi\e^{-1}}_{F,Q_N}}\big(\widehat{H}^1_{\psi,\lambda^{\p}}(U(N)^{\frak{p}},\cO)_{\fm_{Q_N}}[r_{\fm_{Q_N}}]\big)^d\widehat{\otimes}_{{R^{\psi\e^{-1}}_{F,Q_N}}}{R^{\Box,\psi\e^{-1}}_{F,Q_N}}\\
&\cong (V_{Q_N})^{*}\otimes_{R^{\psi\e^{-1}}_{F,Q_N}}L'(N)\\
&\cong \big((V_{Q_N})^{*}\otimes_{R^{\psi\e^{-1}}_{F,Q_N}}R^{\Box,\psi\e^{-1}}_{F,Q_N}\big)\otimes_{R^{\Box,\psi\e^{-1}}_{F,Q_N}}L'(N)\\
&\cong \big(V_{Q_N}\otimes_{R^{\psi\e^{-1}}_{F,Q_N}}R^{\Box,\psi\e^{-1}}_{F,Q_N}\big)^{*}\otimes_{R^{\Box,\psi\e^{-1}}_{F,Q_N}}L'(N).
\end{aligned}
\]
We denote by $V^{\Box}_{Q_N}:=V_{Q_N}\otimes_{R^{\psi\e^{-1}}_{F,Q_N}}R^{\Box,\psi\e^{-1}}_{F,Q_N}$. Then as the underlying representation of the universal $\Sigma_p$-framed deformation, $V^{\Box}_{Q_N}$ has a canonical basis $\b_{\p}$ (up to scalar). From the definition of the $\cO$-algebra homomorphism $R^{\Box,\psi\e^{-1}}_{\p}\to R^{\Box,\psi\e^{-1}}_{F,Q_N}$, there is a unique canonical isomorphism
\[
V^{\Box}_{\p}\otimes_{R^{\Box,\psi\e^{-1}}_{\p}}R^{\Box,\psi\e^{-1}}_{F,Q_N}\cong V^{\Box}_{Q_N}
\]
identifying the corresponding bases. Thus
\[
\begin{aligned}
L(N)&\cong \big(r_{Q_N}\otimes_{R^{\psi\e^{-1}}_{F,Q_N}}R^{\Box,\psi\e^{-1}}_{F,Q_N}\big)^{*}\otimes_{R^{\Box,\psi\e^{-1}}_{F,Q_N}}L'(N)\\
&=(\rho_{\p}^{\Box}\otimes_{R^{\Box,\psi\e^{-1}}_{\p}}R^{\Box,\psi\e^{-1}}_{F,Q_N})^*\otimes_{R^{\Box,\psi\e^{-1}}_{F,Q_N}}L'(N)\\
&=(\rho_{\p}^{\Box})^*\otimes_{R^{\Box,\psi\e^{-1}}_{\p}}L'(N).
\end{aligned}
\]
\end{proof}
Set $j:=4|\Sigma_p|-1$ and define $\mathcal{J}=\cO[\![x_1,\ldots,x_{j}]\!]$. Since $R^{\Box,\psi\e^{-1}}_{F,Q_N}$ is formally smooth over $R^{\psi\e^{-1}}_{F,Q_N}$ of relative dimension $j$, we can and do fix an isomorphism
\begin{equation}\label{RtensorJ}
R^{\psi\e^{-1}}_{F,Q_N}\widehat{\otimes}_{\cO}\mathcal{J}\cong R^{\Box,\psi\e^{-1}}_{F,Q_N}.
\end{equation}
 Recall that for $v\in Q_N$, the finite cyclic group $\Delta(N)_v$ is the maximal $p$-power quotient of $k_v^{\times}$. We denote by $\Delta(N):=\prod_{v\in Q_N}\Delta(N)_v$.  Choose a generator for each $\Delta(N)_w$ then we have a surjection $\cO[\![y_1,\ldots,y_r]\!]\twoheadrightarrow\cO[\Delta(N)]$, whose kernel is the ideal $\mathcal{I}_N=((y_1+1)^{p^{t_1(N)}}-1,\ldots,(y_r+1)^{p^{t_r(N)}}-1)$ with $t_1(N),\ldots,t_r(N)\ge N$. There is group homomorphism $\D(N)\to (R^{\psi\e^{-1}}_{F,Q_N})^{\times}$ (see \cite[Lemma 2.1]{MR2290604}), which extends to 
 \begin{equation}\label{gpaction}
 \cO[\![y_1,\ldots,y_r]\!]\widehat{\otimes}_{\cO}\mathcal{J}\to R^{\psi\e^{-1}}_{F,Q_N}\widehat{\otimes}_{\cO}\mathcal{J}\xrightarrow{(\ref{RtensorJ})}R^{\Box,\psi\e^{-1}}_{F,Q_N}.
 \end{equation}
Denote by \[S_{\infty}:= \cO[\![y_1,\ldots,y_r]\!]\widehat{\otimes}_{\cO}\mathcal{J}.\]
We can view $M(N)$, $L(N)$ and $L'(N)$ as $S_{\infty}$-modules via (\ref{gpaction}).

For each open ideal $\mathfrak{a}$ of $S_{\infty}$, Let $I_{\fa}$ be set of positive integers $N$ such that $\mathcal{I}_{\fa}\subset \fa$. Then $\mathcal{I}_{\fa}$ is cofinite in $\Z_{\ge1}$. Fix a nonprincipal ultrafilter $\mathscr{F}$ on the set $\Z_{\ge1}$, which defines a point $x \in \Spec(\prod_{I_{\fa}}S_{\infty}/\fa)$ by  \cite[Lemma 2.2.2]{MR4386819}. Hence we have the map $\prod_{I_{\fa}}S_{\infty}/\fa\to S_{\infty}/\fa$, which is the localization map of $\prod_{I_{\fa}}S_{\infty}/\fa$ at $x$.  For each open compact subgroup $U_{\p}\subset B^{\times}_{\p}$, let
\begin{equation*}
 \begin{aligned}
  M(U_{\p},\fa,\infty)&:= \big(\prod_{N\in I_{\fa}}(M(N)/\fa)_{U_{\p}}\big)\otimes_{(\prod_{I_{\fa}}S_{\infty}/\fa)}S_{\infty}/\fa\\
 L(U_{\p},\fa,\infty)&:= \big(\prod_{N\in I_{\fa}}(L(N)/\fa)_{U_{\p}}\big)\otimes_{(\prod_{I_{\fa}}S_{\infty}/\fa)}S_{\infty}/\fa\\
 L'(U_{\p},\fa,\infty)&:=\big(\prod_{N\in I_{\fa}}(L'(N)/\fa)_{U_{\p}}\big)\otimes_{(\prod_{I_{\fa}}S_{\infty}/\fa)}S_{\infty}/\fa.
 \end{aligned}
 \end{equation*}
 We define 
 \[
 M_{\infty}:=\varprojlim_{U_{\p},\fa}M(U_{\p},\fa,\infty)
 \]
 and
 \[
 L_{\infty}:=\varprojlim_{U_{\p},\fa}L(U_{\p},\fa,\infty),\quad L'_{\infty}:=\varprojlim_{U_{\p},\fa}L'(U_{\p},\fa,\infty).
 \]
 \begin{proposition}\label{L=2L'}
There is an $R_{\infty}[G_{F_{\p}}\times B_{\p}^{\times}]$-equivariant isomorphism 
  \[
 L_{\infty}\cong (\rho_{\p}^{\Box})^*\boxtimes_{R^{\Box,\psi\e^{-1}}_{\p}}L_{\infty}'.
 \]
 \end{proposition}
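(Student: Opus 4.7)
The plan is to transport the finite-level isomorphism of Lemma \ref{Htypic} through each step of the patching construction of $L_{\infty}$ and $L'_{\infty}$. The key observation is that $(\rho_{\p}^{\Box})^*$ is free of rank $2$ over $R_{\p}^{\Box,\psi\e^{-1}}$, so the functor $(\rho_{\p}^{\Box})^* \otimes_{R_{\p}^{\Box,\psi\e^{-1}}} (-)$ is exact and commutes with arbitrary direct products, with quotients by open ideals $\fa \subset S_{\infty}$, with $U_{\p}$-coinvariants (as $B_{\p}^{\times}$ acts only on the second tensor factor), with localization of $S_{\infty}$-modules, and with filtered projective limits of profinite $\cO$-modules. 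Moreover, completed and ordinary tensor products coincide here since one factor is finite free.

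Concretely, fix an open compact subgroup $U_{\p} \subset B_{\p}^{\times}$ and an open ideal $\fa \subset S_{\infty}$. For each $N \in I_{\fa}$, Lemma \ref{Htypic} furnishes an $R_{\infty}[G_{F_{\p}} \times B_{\p}^{\times}]$-equivariant isomorphism $L(N) \cong (\rho_{\p}^{\Box})^* \otimes_{R_{\p}^{\Box,\psi\e^{-1}}} L'(N)$. Reducing modulo $\fa$, taking $U_{\p}$-coinvariants, passing to the direct product over $N \in I_{\fa}$, and then localizing at the ultrafilter point (tensoring with $S_{\infty}/\fa$ over $\prod_{I_{\fa}} S_{\infty}/\fa$) all commute with the tensor by $(\rho_{\p}^{\Box})^*$, so that
\[
L(U_{\p},\fa,\infty) \;\cong\; (\rho_{\p}^{\Box})^* \otimes_{R_{\p}^{\Box,\psi\e^{-1}}} L'(U_{\p},\fa,\infty),
\]
equivariantly for the diagonal $R_{\infty}$-, $G_{F_{\p}}$-, and $B_{\p}^{\times}$-actions. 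Taking the projective limit over $U_{\p}$ and $\fa$ then produces the desired isomorphism $L_{\infty} \cong (\rho_{\p}^{\Box})^* \boxtimes_{R_{\p}^{\Box,\psi\e^{-1}}} L'_{\infty}$.

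The main bookkeeping subtlety is tracking the $R_{\infty}$-action, in particular the $S_{\infty}$-action induced by (\ref{gpaction}), through the ultrafilter product: at each $N$ the $S_{\infty}$-structure factors through $R^{\Box,\psi\e^{-1}}_{F,Q_N}$ and genuinely depends on $N$, and Lemma \ref{Htypic} supplies only a scalar-canonical isomorphism. Fixing any choice for each $N$ yields a well-defined product isomorphism whose $R_{\infty}$-equivariance is forced termwise and hence survives the localization and the projective limit. Since $(\rho_{\p}^{\Box})^*$ is finite free, no topological delicacies arise in commuting the tensor with these operations. I expect no serious obstacle here; once Lemma \ref{Htypic} is in hand, the argument is entirely formal.
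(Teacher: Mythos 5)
Your proposal is correct and follows essentially the same route as the paper's proof: both invoke Lemma \ref{Htypic} at each finite level $N$ and then carry the isomorphism through the ultrafilter-product and projective-limit construction, using the finite freeness of $(\rho_{\p}^{\Box})^*$ over $R_{\p}^{\Box,\psi\e^{-1}}$ to commute the tensor with products, localization, and limits. Your observation that the $R_{\infty}$-equivariance is forced termwise via (\ref{gpaction}) matches the paper's closing remark in the proof, and your note that the finite-level isomorphism is only scalar-canonical (so the resulting $L_\infty$-isomorphism is not unique) is also acknowledged by the paper in the remark immediately following the proposition.
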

 \begin{proof}
 By Lemma \ref{Htypic}, we pick an isomorphism
  \[
 L(N)\cong (\rho_{\p}^{\Box})^*\boxtimes_{R^{\Box,\psi\e^{-1}}_{\p}}L'(N)
 \]
 for each $N\ge1$. Therefore
 \begin{equation*}
 \begin{aligned}
 L(U_{\p},\fa,\infty)&= \big(\prod_{N\in I_{\fa}}(L(N)/\fa)_{U_{\p}}\big)\otimes_{(\prod_{I_{\fa}}S_{\infty}/\fa)}S_{\infty}/\fa\\
&\cong\big(\prod_{N\in I_{\fa}}\rho^{\Box}_{\p}\otimes_{R^{\Box,\psi\e^{-1}}_{\p}}(L'(N)/\fa)_{U_{\p}}\big)\otimes_{(\prod_{I_{\fa}}S_{\infty}/\fa)}S_{\infty}/\fa\\
 \end{aligned}
 \end{equation*}
 As $(\rho^{\Box}_{\p})^*$ is free of rank $2$, we have
 \begin{equation*}
 \begin{aligned}
 &\big(\prod_{N\in I_{\fa}}\rho^{\Box}_{\p}\otimes_{R^{\Box,\psi\e^{-1}}_{\p}}(L'(N)/\fa)_{U_{\p}}\big)\otimes_{(\prod_{I_{\fa}}S_{\infty}/\fa)}S_{\infty}/\fa\\
&\cong(\rho^{\Box}_{\p})^*\otimes_{R^{\Box,\psi\e^{-1}}_{\p}}\big(\prod_{N\in I_{\fa}}(L'(N)/\fa)_{U_{\p}}\big)\otimes_{(\prod_{I_{\fa}}S_{\infty}/\fa)}S_{\infty}/\fa.\\
&\cong(\rho^{\Box}_{\p})^*\otimes_{R^{\Box,\psi\e^{-1}}_{\p}}L'(U_{\p},\fa,\infty).
 \end{aligned}
 \end{equation*}
 Again by the same reason,
 \[
\begin{aligned}
L_{\infty}&=\varprojlim_{U_{\p},\fa}L(U_{\p},\fa,\infty)\\
&\cong\varprojlim_{U_{\p},\fa}(\rho^{\Box}_{\p})^*\otimes_{R^{\Box,\psi\e^{-1}}_{\p}}L'(U_{\p},\fa,\infty)\\
&\cong(\rho^{\Box}_{\p})^*\otimes_{R^{\Box,\psi\e^{-1}}_{\p}} \varprojlim_{U_{\p},\fa}L'(U_{\p},\fa,\infty)\\
&=(\rho^{\Box}_{\p})^*\otimes_{R^{\Box,\psi\e^{-1}}_{\p}}L'_{\infty}.
\end{aligned}
\]
Since $S_{\infty}$ acts on $M(N)$, $L(N)$ and $L'(N)$ via (\ref{gpaction}), all the isomorphisms are $R_{\infty}$-equivariant.
 \end{proof}
 \begin{remark}
Since everything is framed, we don't need the assumption \[\End_{G_{F_{\p}}}(\overline{r}|_{G_{F_{\p}}})=\F.\]
But the isomorphism in Proposition \ref{L=2L'} is not unique (even up to scalar).
\end{remark}
 Let $\chi\colon(\A^{\infty}_F)^{\times}/ F^{\times} \to1+\varpi\cO$ be a continuous character such that $\chi|_{U_v\cap F_v^{\times}}$ is trivial when $v\nmid p$. Recall that twisting by $\chi|_{F^{\times}_{\p}}$ (which we also write $\chi_{\p}$) defines an isomorphism
\[
{\rm tw}_{\chi}\colon R_{\p}^{\Box,\psi\chi^2}\xrightarrow{\sim} R_{\p}^{\Box,\psi}.
\]
\begin{proposition} \label{L_infty twist}
Let $\sigma'$ be a locally algebraic type of $B_{\p}^{\times}$, and let $(\sigma'\otimes\chi_{\p}\circ\Nrd)^0$  be a $\cO_{B_{\p}}^{\times}$-invariant $\cO$-lattice of $\sigma'\otimes\chi_{\p}\circ\Nrd$. Then the action of $R^{\Box,\psi\e^{-1}}_{\p}$ on
\[
L_{\infty}((\sigma'\otimes\chi_{\p}\circ\Nrd)^0):=\Hom_{\cO[\![\cO_{B_{\p}}^{\times}]\!]}^{\rm cont}(L_{\infty},((\sigma'\otimes\chi_{\p}\circ\Nrd)^0)^d)^d
\]
factors through the quotient $R^{\Box,\psi\e^{-1}}_{\p}(\sigma'\otimes\chi_{\p}\circ\Nrd)$, which is defined by the following diagram
\begin{equation*}
\begin{tikzcd}
{R^{\Box,\chi^{-2}\psi\e^{-1}}_{\p}} \arrow[d,two heads] \arrow[r, "{\rm tw}_{\chi^{-1}}"] & {R^{\Box,\psi\e^{-1}}_{\p}} \arrow[d,two heads]                         \\
{R^{\Box,\chi^{-2}\psi\e^{-1}}_{\p}(\sigma')} \arrow[r]                          & {R^{\Box,\psi\e^{-1}}_{\p}(\sigma'\otimes\chi_{\p}\circ\Nrd)}.
\end{tikzcd}
\end{equation*}
\end{proposition}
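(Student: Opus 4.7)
The strategy is to reduce to an untwisted case by transporting the situation along the character twist \(\chi\), using Lemma \ref{completedcohomologytwist} as the foundation on which the entire patching tower gets twisted.

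First, I would apply Lemma \ref{completedcohomologytwist} at each finite patching level \(N\). Choosing \(\chi\) as in the statement and noting that \(\chi \equiv 1 \pmod{\varpi}\) ensures that the Hecke action at places \(v \notin S\) is only modified by the explicit scalars \(\chi(\varpi_v)\) and \(\chi(\varpi_v)^2\), so the localization at \(\fm_{Q_N}\) on one side matches the localization at the appropriately twisted maximal ideal on the other side, with the two sides having the same residual Galois representation. Taking \(U_p^\p\)-equivariant Hom against \(V_{\lambda^\p}\) (and noting that the twist \(\chi \circ \det\) only touches the \(\p\)-component after restriction, by our assumption that \(\chi\) is trivial on \(U_v \cap F_v^\times\) for \(v \nmid p\)) produces an isomorphism
\[
\widehat{H}^1_{\psi,\lambda^\p}(U(N)^\p,\cO)_{\fm_{Q_N}} \otimes (\chi_\p \circ \det \boxtimes \chi_\p)
\;\cong\; \widehat{H}^1_{\psi\chi^2,\lambda^\p}(U(N)^\p,\cO)_{\fm_{Q_N}'},
\]
compatibly with Taylor--Wiles primes and with the framed deformation rings up to the twisting isomorphism \({\rm tw}_\chi\) on the \(\p\)-local factor.

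Next, I would pass this isomorphism through the whole patching construction: dualize, take \(U_\p\)-coinvariants modulo \(\fa\), apply the ultrafilter, and pass to the limit. The formation of \(L_\infty\) is functorial in systems of maps compatible with the \(S_\infty\)- and \(R_\infty\)-actions, and the twist only introduces the scalar \(\chi_\p \circ \Nrd\) on \(B_\p^\times\) and \(\chi_\p\) on \(G_{F_\p}\), together with the precomposition by \({\rm tw}_\chi\) on the \(R_\p^{\Box,\psi\e^{-1}}\)-factor of \(R_\infty\). The outcome is an \(R_\infty[G_{F_\p} \times B_\p^\times]\)-equivariant isomorphism between \(L_\infty\), viewed as an \(R_\infty\)-module through \({\rm tw}_\chi\), and a module \(L_\infty^\chi\) built identically but from completed cohomology with character \(\psi\chi^2\).

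With this in place, taking Hom against \(((\sigma' \otimes \chi_\p \circ \Nrd)^0)^d\) on the \(L_\infty\)-side corresponds, via the twist \(\chi_\p \circ \Nrd\), to taking Hom against \((\sigma'^0)^d\) on the \(L_\infty^\chi\)-side, so that
\[
L_\infty((\sigma'\otimes \chi_\p \circ \Nrd)^0) \;\cong\; L_\infty^\chi(\sigma'^0)
\]
as modules over \(R^{\Box,\psi\e^{-1}}_\p\) acting through \({\rm tw}_\chi\) (equivalently, as modules over \(R^{\Box,\chi^{-2}\psi\e^{-1}}_\p\)). The untwisted case of the factorization statement, which is the standard consequence of the fact that the \(\cO_{B_\p}^\times\)-action on \(L_\infty^\chi\) together with the \(G_{F_\p}\)-action via the universal framed deformation are linked by local-global compatibility and Jacquet--Langlands, implies that the \(R^{\Box,\chi^{-2}\psi\e^{-1}}_\p\)-action on \(L_\infty^\chi(\sigma'^0)\) factors through \(R^{\Box,\chi^{-2}\psi\e^{-1}}_\p(\sigma')\). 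Pulling this factorization back along \({\rm tw}_\chi\) recovers exactly the commutative diagram in the statement, giving the desired factorization through \(R^{\Box,\psi\e^{-1}}_\p(\sigma' \otimes \chi_\p \circ \Nrd)\).

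The main technical obstacle will be the careful bookkeeping at two points: (i) tracking that the twist from Lemma \ref{completedcohomologytwist} survives the ultrafilter/patching step with the correct \(R_\infty\)-equivariance, i.e.\ that precomposition by \({\rm tw}_\chi\) at the \(\p\)-factor is the only change, and (ii) checking that the type \(\sigma' \otimes \chi_\p \circ \Nrd\) on the quaternionic side corresponds, under the inertial Jacquet--Langlands of \cite{Gee-Geraghty}, to a type whose associated potentially semi-stable deformation ring is precisely \({\rm tw}_\chi\)-image of \(R^{\Box,\chi^{-2}\psi\e^{-1}}_\p(\sigma')\); this is a direct computation with Hodge--Tate weights and inertial types under character twists, but it must be done coherently with all the bases chosen for the framings.
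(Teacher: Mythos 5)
Your strategy is essentially the one the paper uses: apply Lemma~\ref{completedcohomologytwist} to de-twist, then invoke local-global compatibility. The main difference is the order in which you do the twist and the reduction to finite level. The paper first reduces the statement about $L_\infty((\sigma'\otimes\chi_\p\circ\Nrd)^0)$ to a statement at each finite patching level (using the argument from \cite[Lemma~4.17]{CEGGPS1}), and only then applies Lemma~\ref{completedcohomologytwist} \emph{once} at finite level; that leaves a concrete support statement for maximal ideals of $R_\p^{\Box,\chi^{-2}\psi\e^{-1}}[1/p]$, which follows from local-global compatibility because $R^{\Box,\chi^{-2}\psi\e^{-1}}_{F,Q_N}$ acts on the relevant module through an $\cO$-torsion-free reduced quotient. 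You instead twist through the entire patching tower first, building an $L_\infty^\chi$, and then appeal to ``the untwisted case.'' This works, but it has two costs that the paper's ordering avoids: (i) you must verify that the finite-level twist isomorphisms are compatible with passage to $U_\p$-coinvariants, reduction modulo $\fa$, the ultrafilter localization, and the inverse limit --- you flag this yourself, and it is genuine extra bookkeeping; and (ii) your ``untwisted case'' is literally the $\chi=1$ instance of the proposition being proved, so citing it as a ``standard consequence'' risks circularity unless you also spell out the finite-level reduction and the density/reducedness argument for it. Neither point is a fatal gap, but the paper's sequencing (reduce to finite level, then twist, then local-global) is tighter precisely because it never has to propagate the twist through the ultrafilter construction.
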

 \begin{proof}
Using the same argument as the proof of the first part of \cite[Lemma 4.17]{CEGGPS1}, we reduce to prove that, for any $N\ge1$ and any $U_{p}^{\p}$ small enough, the same result holds for
\[
\Hom_{\cO^{\times}_{B_{\p}}}\big((\sigma'\otimes\chi_{\p}\circ\Nrd)^0,\widehat{H}^1_{\psi}(U(N)^{p}U_p^{\p},\cO)_{\fm_{Q_N}}\big)\otimes_{{R^{\psi\e^{-1}}_{F,Q_N}}}(R^{\Box,\psi\e^{-1}}_{F,Q_N})^{\vee}.
\]
By Lemma \ref{completedcohomologytwist}, we only need to prove the action of $R^{\Box,\chi^{-2}\psi\e^{-1}}_{\p}$ on
\begin{equation}\label{localglobal}
\Hom_{\cO^{\times}_{B_{\p}}}\big((\sigma')^0,\widehat{H}^1_{\psi\chi^{-2}}(U(N)^{p}U_p^{\p},\cO)_{\fm_{Q_N}}\big)\otimes_{{R^{\chi^{-2}\psi\e^{-1}}_{F,Q_N}}}(R^{\Box,\chi^{-2}\psi\e^{-1}}_{F,Q_N})^{\vee}
\end{equation}
factors through the quotient $R^{\Box,\chi^{-2}\psi\e^{-1}}_{\p}(\sigma')$. Since $R^{\Box,\chi^{-2}\psi\e^{-1}}_{F,Q_N}$ acts on (\ref{localglobal}) through an $\cO$-torsion free reduced quotient, it's enough to show that if $x$ is a maximal ideal of $R^{\Box,\chi^{-2}\psi\e^{-1}}_{\p}[1/p]$ and belongs to the support of (\ref{localglobal}), then the pull back of $x$ along the morphism \[R^{\Box,\chi^{-2}\psi\e^{-1}}_{\p}[1/p]\to R^{\Box,\chi^{-2}\psi\e^{-1}}_{F,Q_N}[1/p]\]belongs to $\Spec R^{\Box,\chi^{-2}\psi\e^{-1}}_{\p}(\sigma')[1/p]$. But this is a consequence of the local-global compatibility.
\end{proof}
\begin{proposition} \label{M_infty twist}
Let $\sigma$ be a locally algebraic type of $(B'_{\p})^{\times}$, and let $(\sigma\otimes\chi_{\p}\circ\det)^0$  be a $\cO_{B'_{\p}}^{\times}$-invariant $\cO$-lattice of $\sigma\otimes\chi_{\p}\circ\det$. Then the action of $R^{\Box,\psi\e^{-1}}_{\p}$ on
\[
M_{\infty}((\sigma\otimes\chi_{\p}\circ\det)^0):=\Hom_{\cO[\![\cO_{B'_{\p}}^{\times}]\!]}^{\rm cont}(M_{\infty},((\sigma\otimes\chi_{\p}\circ\det)^0)^d)^d
\]
factors through the quotient $R^{\Box,\psi\e^{-1}}_{\p}(\sigma\otimes\chi_{\p}\circ\det)$.
\end{proposition}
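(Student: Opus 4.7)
The plan is to mirror the proof of Proposition \ref{L_infty twist}, replacing the completed cohomology of the indefinite Shimura curves with the space of algebraic modular forms $S_{\psi}(U^p,\cO)$ on the definite quaternion algebra $B'$. Since $p$ splits in $F$ and $B'$ is split at every finite place, $(B'_{\p})^{\times} \cong \GL_2(\Q_p)$ and $\cO_{B'_{\p}}^{\times} \cong \GL_2(\Z_p) = K$, so $\sigma$ is really a locally algebraic type of $\GL_2(\Q_p)$ in the sense of \S\ref{qf}.

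First, the argument of the first part of \cite[Lemma 4.17]{CEGGPS1} reduces the statement to a finite-level claim: it is enough to show that for any $N \ge 1$ and any sufficiently small $U_p^{\p}$, the action of $R_{\p}^{\Box,\psi\e^{-1}}$ on
\[
\Hom_{\cO_{B'_{\p}}^{\times}}\!\big((\sigma \otimes \chi_{\p} \circ \det)^0,\, S_{\psi}(U(N)^{p}U_p^{\p}, \cO)_{\fm_{Q_N}}\big) \otimes_{R^{\psi\e^{-1}}_{F,Q_N}} (R^{\Box,\psi\e^{-1}}_{F,Q_N})^{\vee}
\]
factors through $R^{\Box,\psi\e^{-1}}_{\p}(\sigma \otimes \chi_{\p} \circ \det)$.

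Second, I would establish an analogue of Lemma \ref{completedcohomologytwist} for the definite quaternionic forms: since $S_{\psi}(U^p,\cO)$ consists of $\cO$-valued functions on the finite adelic double coset space, the formula $f \mapsto f \cdot (\chi \circ v)$, where $v$ denotes the reduced norm on $B'$, defines a $(B' \otimes_{\Q} \Q_p)^{\times} \times G_F$-equivariant isomorphism
\[
S_{\psi}(U^p,\cO)_{\fm} \otimes (\chi \circ \det \boxtimes \chi) \xrightarrow{\sim} S_{\psi\chi^2}(U^p,\cO)_{\fm'}
\]
intertwining $T_v$ and $S_v$ with the expected powers of $\chi(\varpi_v)$. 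This is even more direct than Lemma \ref{completedcohomologytwist} because no sheaf-cohomological computation is required. Applying this isomorphism with $\chi^{-1}$ in place of $\chi$ reduces the problem to showing that $R^{\Box,\chi^{-2}\psi\e^{-1}}_{\p}$ acts on the analogous finite-level Hom space built from $\sigma^0$ and central character $\psi\chi^{-2}$ through its quotient $R^{\Box,\chi^{-2}\psi\e^{-1}}_{\p}(\sigma)$.

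Finally, since $R^{\Box,\chi^{-2}\psi\e^{-1}}_{F,Q_N}$ acts on this finite-level module via an $\cO$-torsion free reduced quotient, it suffices to verify the factorization at each maximal ideal $x$ of $R^{\Box,\chi^{-2}\psi\e^{-1}}_{\p}[1/p]$ lying in the support. Each such $x$ is pulled back from a classical automorphic point, and classical local-global compatibility for $\GL_2$ over totally real fields, applied to the Jacquet--Langlands transfer to $\GL_2/F$ of the corresponding automorphic representation of $(B')^{\times}$, guarantees that the associated Galois representation restricted to $G_{F_{\p}}$ is potentially semi-stable with the prescribed Hodge--Tate weights and inertial type. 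Hence $x$ lies in $\Spec R^{\Box,\chi^{-2}\psi\e^{-1}}_{\p}(\sigma)[1/p]$, completing the argument. The main obstacle is essentially bookkeeping: setting up the twist isomorphism cleanly at the level of completed forms and checking its compatibility with the $R^{\Box,\psi\e^{-1}}_{F,Q_N}$-action and the patching construction. No genuinely new geometric input is needed, since the corresponding facts in the indefinite case already appear in Proposition \ref{L_infty twist}.
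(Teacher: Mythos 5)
Your proof follows the paper's approach exactly: reduce to finite level via the argument of \cite[Lemma 4.17]{CEGGPS1}, twist the space of quaternionic algebraic modular forms by $\chi\circ v$, and conclude by classical local-global compatibility at closed points in the support; the paper simply cites \cite[Proposition 5.5]{Paskunas-JL} for the definite twist isomorphism rather than spelling it out as you do. One small slip worth noting: $S_{\psi}(U^p,\cO)_{\fm}$ carries no $G_F$-action in the definite setting (the Galois data lives in the Hecke algebra via $r_{\fm}$), so your twist isomorphism should be only $(B'\otimes_{\Q}\Q_p)^{\times}$-equivariant together with the Hecke intertwining you state, not $(B'\otimes_{\Q}\Q_p)^{\times}\times G_F$-equivariant as in Lemma \ref{completedcohomologytwist} --- but this does not affect the argument.
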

\begin{proof}
Use \cite[Proposition 5.5]{Paskunas-JL} and the same argument as the proof of Proposition \ref{L_infty twist}.
\end{proof}
\begin{corollary}\label{anycharacter}
Suppose $\d\colon F^{\times}_{\p}\to  \cO^{\times}$ is a continuous character such that $\d\equiv1\pmod\varpi$ and $\d^2$ is locally algebraic, then Proposition \ref{L_infty twist} and Proposition \ref{M_infty twist} remain true when $\chi_{\p}$ is replaced by $\d$.
\end{corollary}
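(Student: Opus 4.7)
The plan is to globalize the local character $\d$ to a global character satisfying the hypotheses of Propositions \ref{L_infty twist} and \ref{M_infty twist}, and then apply those propositions directly. The two key ingredients are a $2$-divisibility observation and a globalization of locally algebraic Hecke characters.

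First, since $p$ is odd, the target $1+\varpi\cO$ is a torsion-free pro-$p$ abelian group, hence uniquely $2$-divisible. The group of continuous characters $(\A^{\infty}_F)^{\times}/F^{\times}\to 1+\varpi\cO$ that are trivial on $U_v\cap F_v^{\times}$ for all finite $v\nmid p$ inherits this $2$-divisibility. Consequently, if one can produce a global character $\tilde{\mu}$ with $\tilde{\mu}_{\p}=\d^2$, then the unique square root $\tilde{\chi}:=\tilde{\mu}^{1/2}$ in this character group has local component at $\p$ equal to the unique square root of $\d^2$ in $1+\varpi\cO$, namely $\d$ itself.

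The core step is thus to globalize $\d^2$, and here the local algebraicity hypothesis is used. Since $F_{\p}=\Q_p$, there exists $a\in\Z$ such that $\d^2|_{1+p\Z_p}$ agrees with $x\mapsto x^a$ on an open subgroup. Combining an appropriate power of the norm-to-$\Q$ Hecke character with a finite order character of a suitable ray class group, and then freely adjusting the local components at places $v\mid p$ with $v\neq\p$, I would produce a continuous $\tilde{\mu}\colon(\A^{\infty}_F)^{\times}/F^{\times}\to 1+\varpi\cO$ trivial on $U_v\cap F_v^{\times}$ for $v\nmid p$ and satisfying $\tilde{\mu}_{\p}=\d^2$. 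The crucial numerical fact making this feasible in our setup is that $p$ splits completely in $F$, so that $\sum_{v\mid p,\,v\neq\p}[F_v:\Q_p]=[F:\Q]-1$, matching the Dirichlet rank of $\cO_F^{\times}$ modulo torsion; this gives just enough freedom at the auxiliary places above $p$ to absorb the global unit constraint.

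With such a $\tilde{\chi}$ in hand, Propositions \ref{L_infty twist} and \ref{M_infty twist} apply verbatim to $\tilde{\chi}$, and since $\tilde{\chi}_{\p}=\d$, the conclusions transfer to $\d$. The main technical obstacle is Step~2: verifying that the Dirichlet unit constraints admit a solution in our $1+\varpi\cO$-valued character group. In general this amounts to a surjectivity statement that, in the worst case, uses Leopoldt's conjecture for $F$ at $p$; for the specific auxiliary $F$ constructed in \S\ref{globalization} the verification is direct, and if needed one can relax the assumption by allowing controlled auxiliary ramification at a prime outside $S$, which does not disturb the patching construction.
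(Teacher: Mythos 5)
Your plan — pass from $\d$ to $\d^2$, globalize $\d^2$ to a Hecke character $\tilde\mu$ satisfying the hypotheses of Propositions~\ref{L_infty twist} and~\ref{M_infty twist}, take the unique square root in the uniquely $2$-divisible group of $1+\varpi\cO$-valued characters, and apply those propositions verbatim — founders on the globalization step, and not merely for want of Leopoldt: under Leopoldt it actually \emph{fails}. Since $p$ is odd and $\d^2$ is locally algebraic with values in the torsion-free group $1+\varpi\cO$, one may write $\d^2=(\tilde\e^{-1}\e)^a\nu$ with $a\in\Z$ and $\nu$ unramified. Any continuous $\tilde\mu\colon(\A_F^\infty)^\times/F^\times\to1+\varpi\cO$ that is trivial on $U_v\cap F_v^\times$ for all $v\nmid p$ factors through the Galois group of the maximal abelian pro-$p$ extension of $F$ unramified outside $p$ (the tame quotient at $w_1$ has order prime to $p$ by the choice of $w_1$). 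Under Leopoldt this group has $\Z_p$-rank one, generated modulo torsion by the cyclotomic $\Z_p$-extension, so every torsion-free-valued $\tilde\mu$ is a $\Z_p$-power of $(\tilde\e^{-1}\e)|_{G_F}$ — whose $\p$-component is ramified. Hence $\tilde\mu_\p=\d^2$ forces $\nu=1$. The count you invoke does not create extra room: the auxiliary places $v\mid p$, $v\neq\p$ supply exactly the Dirichlet rank of $\cO_F^\times$ needed to absorb the global unit constraint, leaving nothing to realize an independent unramified component at $\p$. And the fallback of allowing ramification at some $w\notin S$ breaks the hypothesis $\chi|_{U_w\cap F_w^\times}=1$; repairing it would mean shrinking $U_w$, which changes the level data underlying $M_\infty$, $L_\infty$, $L'_\infty$ rather than giving a statement about the modules already patched.

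The paper's proof sidesteps the globalization of the unramified part entirely. Write $\d=\eta^a\d'$ with $\eta=\sqrt{\tilde\e^{-1}\e}$ (this is where the unique $2$-divisibility of $1+\varpi\cO$ enters, exactly as in your first step) and $\d'$ smooth; for $p$ odd and values in $1+\varpi\cO$ a smooth character is automatically unramified. The factor $\eta^a$ is the $\p$-component of a genuine global character unramified outside $p$, so Propositions~\ref{L_infty twist} and~\ref{M_infty twist} apply to it directly. The unramified factor $\d'$ contributes nothing: $\d'\circ\Nrd$ is trivial on $\cO_{B_\p}^\times$ and $\d'\circ\det$ is trivial on $\GL_2(\Z_p)$, so $L_\infty((\s'\otimes\d\circ\Nrd)^0)=L_\infty((\s'\otimes\eta^a\circ\Nrd)^0)$ (and similarly for $M_\infty$); meanwhile an unramified twist preserves potential semistability, Hodge--Tate weights and inertial type, so $R^{\Box,\psi\e^{-1}}_\p(\s'\otimes\d\circ\Nrd)=R^{\Box,\psi\e^{-1}}_\p(\s'\otimes\eta^a\circ\Nrd)$ as quotients of $R^{\Box,\psi\e^{-1}}_\p$. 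The observation you are missing is that the unramified twist is a no-op on both sides, not something that needs to be globalized.
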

\begin{proof}
Let $\tilde{\e}$ be the Teichmüller lifting of the modulo $p$ cyclotomic character. We write $\eta:=\sqrt{\tilde{\e}^{-1}\e}|_{G_{F_{\p}}}$. Since $\d$ is locally algebraic, we have $\d=\eta^a\d'$ for some smooth character $\d'$ and some integer $a$. Since the statement is true when $\d$ is smooth, we can reduce to the case when $\d=\eta^a$ for some integer $a$. But this is exactly Proposition \ref{L_infty twist} and Proposition \ref{M_infty twist} since $\eta$ is the restriction of $\sqrt{\tilde{\e}^{-1}\e}$.
\end{proof}
\subsection{Arithmetic action} \begin{definition}\label{arithmetic}
Let $R$ be a complete local noetherian ring which is faithfully flat over $R_{\p}^{\Box}$. Suppose $M\in\Mod^{\rm fg.aug}_{G}(R)$ is nonzero. We say the action of $R$ on $M$ is arithmetic (with respect to $R_{\p}^{\Box}$) if the following conditions hold:
\hfill\begin{enumerate}
\item\label{arith1} M is projective in $\Mod^{\rm pro}_{K}(\cO)$.
\item \label{arith2}For any locally algebraic type $\sigma$, we let $\sigma^0$  be a $\GL_2(\Z_p)$-invariant $\cO$-lattice of $\sigma$.   Then the action of $R_{\infty}$ over
\[
M(\sigma^0):=\Hom_{\cO[\![\GL_2(\Z_p)]\!]}^{\rm cont}(M,(\sigma^0)^d)^d
\]
factors through the quotient $R(\sigma):=R\otimes_{R_{\p}^{\Box}}R_{\p}^{\Box}(\sigma)$. Moreover, $M(\sigma^0)$ is finitely generated maximal Cohen-Macaulay over the $R(\s)$.
\item\label{arith3} For any $\sigma$, the action of $\mathcal{H}(\sigma)$ on 
\[
M(\sigma^{0})[1/p]\simeq \Hom_K(\s,M^d[1/p])'\footnote{$\Hom_K(\s,M^d[1/p])'$ is the weak dual of the Banach space $ \Hom_K(\s,M^d[1/p])$.}\simeq \Hom_G(\cInd_{K}^G\s,M^d[1/p])'
\]
coincides with the action given by the composite
\[
\cH(\s)\xrightarrow{\eta} R_{\p}^{\Box}(\sigma)[1/p]\to R(\s)[1/p],
\]
where $\eta\colon\cH(\s)\to R_{\p}^{\Box}(\sigma)[1/p]$ is defined in \cite[Theorem 4.1]{CEGGPS1}.
\end{enumerate}
\end{definition}
Let $\L$ be the universal deformation ring of the trivial $1$-dimensional representation of $G_{F_{\p}}$ and 
let $\mathbf{1}^{\rm univ}$ be the universal deformation. We also view $\mathbf{1}^{\rm univ}$ as a representation of $F^{\times}_{\p}$. There is an isomorphism 
\[
R^{\Box}_{\p}\cong R^{\Box,\psi\e^{-1}}_{\p}\widehat{\otimes}\L
\]
which makes $R_{\infty}\widehat{\otimes}_{\cO}\L$ an $R^{\Box}_{\p}$-algebra. Let $\sigma$ be a locally algebraic type of $(B'_{\p})^{\times}$. If $R^{\Box}_{\p}(\sigma)$ is nonzero, there exists a character $\d\colon F^{\times}_{\p}\to\cO^{\times}$ with trivial modulo $p$ reduction, such that $\psi\d^{-2}|_{\cO^{\times}_{F_{\p}}}$ is the central character of $\sigma$. Let $\L^{\rm ur}$ be the quotient of $\L$ corresponding to unramified characters. We have 
\[
R^{\Box}_{\p}(\sigma)\cong R^{\Box,\d^{-2}\psi\e^{-1}}_{\p}(\sigma)\widehat{\otimes}\L^{\rm ur}.
\]
And the diagram 
\[
\begin{tikzcd}
R^{\Box}_{\p} \arrow[d, two heads] \arrow[r, "\sim"]                      & {R^{\Box,\psi\e^{-1}}_{\p}\widehat{\otimes}\L} \arrow[d, two heads,"{\rm tw}_{\d}\otimes{\rm tw}_{\d^{-1}}"]    \\
R^{\Box}_{\p}(\sigma) \arrow[r, "\sim"] & {R^{\Box,\d^{-2}\psi\e^{-1}}_{\p}(\sigma)\widehat{\otimes}\L^{\rm ur}}
\end{tikzcd}
\]
is commutative.

\begin{lemma}\label{M twisti arith}
The action of $R_{\infty}\widehat{\otimes}_{\cO}\L$ on $M_{\infty}\widehat{\otimes}_{\cO} (\mathbf{1}^{\rm univ})^{-1}\circ\det$ is arithmetic with respect to $R_{\p}^{\Box}$.
\end{lemma}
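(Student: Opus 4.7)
The plan is to verify the three conditions of Definition \ref{arithmetic} one by one, reducing each to the corresponding property of $M_{\infty}$ together with the twisting results of Proposition \ref{M_infty twist} and Corollary \ref{anycharacter}. The main work is to transfer information from the fixed-determinant ring $R_{\p}^{\Box,\psi\e^{-1}}$ to the universal framed deformation ring $R_{\p}^{\Box}$ through the isomorphism $R_{\p}^{\Box}\cong R_{\p}^{\Box,\psi\e^{-1}}\widehat{\otimes}\L$.

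First I would verify condition (\ref{arith1}). The patching construction of \S\ref{globalization} (carried out in the same manner as \cite{CEGGPS1}, \cite{CEGGPS2}) produces $M_{\infty}$ as a projective object in $\Mod^{\rm pro}_{K}(\cO)$. Because $(\mathbf{1}^{\rm univ})^{-1}\circ\det$ is free of rank one over $\L$, tensoring preserves projectivity in $\Mod^{\rm pro}_{K}(\cO)$, giving (\ref{arith1}).

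For condition (\ref{arith2}), the key observation is the natural identification
\[
\bigl(M_{\infty}\widehat{\otimes}_{\cO}(\mathbf{1}^{\rm univ})^{-1}\circ\det\bigr)(\sigma^0)\;\cong\;M_{\infty}\bigl((\sigma\otimes\mathbf{1}^{\rm univ}\circ\det)^0\bigr)
\]
as $R_{\infty}\widehat{\otimes}_{\cO}\L$-modules. To check the factorization through $(R_{\infty}\widehat{\otimes}\L)\otimes_{R_{\p}^{\Box}}R_{\p}^{\Box}(\sigma)$, I would argue by density: the $\cO$-points of $\L$ coming from characters $\delta$ with $\delta\equiv 1\pmod\varpi$ and $\delta^{2}$ locally algebraic are Zariski-dense in $\Spec\L[1/p]$. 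At each such point the specialization is $M_{\infty}((\sigma\otimes\delta\circ\det)^0)$, and Corollary \ref{anycharacter} tells us that the $R_{\p}^{\Box,\psi\e^{-1}}$-action on this factors through $R_{\p}^{\Box,\psi\e^{-1}}(\sigma\otimes\delta\circ\det)$. Under ${\rm tw}_{\delta^{-1}}$ this is $R_{\p}^{\Box,\delta^{-2}\psi\e^{-1}}(\sigma)$, which is exactly the specialization of $R_{\p}^{\Box}(\sigma)\cong R_{\p}^{\Box,\delta^{-2}\psi\e^{-1}}(\sigma)\widehat{\otimes}\L^{\rm ur}$ at the image of $\delta$, via the commutative diagram preceding the lemma. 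Since the specializations are Zariski-dense and both the source and the would-be factor are complete Noetherian local rings, the factorization holds universally. The maximal Cohen--Macaulay property is inherited: at each specialization $M_{\infty}(\sigma^0\otimes\delta\circ\det)$ is maximal Cohen--Macaulay over $R_{\p}^{\Box,\psi\e^{-1}}(\sigma\otimes\delta\circ\det)\widehat{\otimes}(\text{smooth factors of }R_{\infty})$ by the arithmetic action of $R_{\infty}$ on $M_{\infty}$ (the analogue of \cite[Theorem~4.15]{CEGGPS1} for our patching setup), and the base change to the regular ring $\L$ preserves both finite generation and the CM property.

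Finally, for condition (\ref{arith3}), the Hecke action on $M_{\infty}((\sigma\otimes\delta\circ\det)^0)[1/p]$ coincides with the composition $\cH(\sigma\otimes\delta\circ\det)\xrightarrow{\eta}R_{\p}^{\Box,\psi\e^{-1}}(\sigma\otimes\delta\circ\det)[1/p]$ by the arithmeticity of $M_{\infty}$ itself. The map $\eta$ is compatible with character twists in the sense that $\eta$ for $\sigma\otimes\delta\circ\det$ and $\eta$ for $\sigma$ differ by the expected factor coming from $\delta$, matching the identification $R_{\p}^{\Box,\psi\e^{-1}}(\sigma\otimes\delta\circ\det)\cong R_{\p}^{\Box,\delta^{-2}\psi\e^{-1}}(\sigma)$ via ${\rm tw}_{\delta^{-1}}$. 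Thus the universal Hecke compatibility follows once more by density of the $\cO$-points of $\L$. The main obstacle in the argument is bookkeeping: correctly matching the twists by $\delta$ on the automorphic side with the ring isomorphism ${\rm tw}_{\delta}\otimes{\rm tw}_{\delta^{-1}}$ on the Galois side, and verifying that passing from pointwise specializations to the universal statement over $\L$ is legitimate, which uses the fact that $\L$ is a formally smooth complete local $\cO$-algebra so that its $\cO$-points are Zariski-dense.
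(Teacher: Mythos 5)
Your overall plan — verify the three conditions of Definition \ref{arithmetic} by reducing to the arithmeticity of $M_\infty$ via twists — is the right idea, and for condition~(1) your remark that $(\mathbf{1}^{\rm univ})^{-1}\circ\det$ is free of rank one is essentially the content of \cite[Proposition 6.10]{CEGGPS2}, which the paper cites. But for condition~(2) you take a different route from the paper, and as written it has a gap.

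You propose a density argument: specialize the $\L$-action at the $\cO$-points $\delta$ with $\delta\equiv 1\pmod\varpi$ and $\delta^2$ locally algebraic, invoke Corollary~\ref{anycharacter} at each such $\delta$, and then claim that ``since the specializations are Zariski-dense the factorization holds universally.'' This deduction is not automatic. You want to show that the ideal $I=\Ker\bigl(R_\p^{\Box}\to R_\p^{\Box}(\sigma)\bigr)$ annihilates $M:=\bigl(M_{\infty}\widehat{\otimes}_{\cO}(\mathbf{1}^{\rm univ})^{-1}\circ\det\bigr)(\sigma^0)$. Knowing that the image of $I$ kills the fiber $M/\fm_\delta M$ for each nice $\delta$ gives $I\cdot M\subset\bigcap_\delta\fm_\delta M$, and to conclude $I\cdot M=0$ you need $M$ to inject into $\prod_\delta M/\fm_\delta M$. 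That requires knowing $M$ is $\cO$-torsion-free \emph{and} that the nice $\delta$'s are dense inside $\Supp_\L(M[1/p])$ — not merely dense in $\Spec\L[1/p]$. Your argument establishes density in $\Spec\L[1/p]$ but does not control the support; in fact $M$ is concentrated on a coset of $\Spec\L^{\rm ur}[1/p]$ (the central-character constraint), so the relevant locus is a priori unknown until you pin down the $\L$-module structure. The same issue infects your treatment of the Cohen--Macaulay claim: you cannot deduce the CM property of $M$ over $R_\p^\Box(\sigma)$ from the CM property of its fibers over $\delta$ without knowing how $M$ sits as an $\L$-module.

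The paper resolves all of this in one stroke by invoking \cite[Proposition 6.12]{CEGGPS2}, which gives a canonical isomorphism of $R_\p^{\Box}$-modules
\[
\Hom_{\cO[\![K]\!]}^{\rm cont}\bigl(M_{\infty}\widehat{\otimes}_{\cO}(\mathbf{1}^{\rm univ})^{-1}\circ\det,(\sigma^0)^d\bigr)^d\cong\Hom_{\cO[\![K]\!]}^{\rm cont}\bigl(M_{\infty}\otimes_{\cO}(\d\circ\det),(\sigma^0)^d\bigr)^d\,\widehat{\otimes}_{\cO}\,\L^{\rm ur}
\]
for a single $\delta$ matching the central character of $\sigma$, with $\L$ acting on the right through $\L\xrightarrow{\rm tw_{\d^{-1}}}\L\twoheadrightarrow\L^{\rm ur}$. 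This identifies $M$ globally, not just fiberwise, as the $\L^{\rm ur}$-family attached to one twist; Corollary~\ref{anycharacter} applied once to $M_\infty\otimes(\delta\circ\det)$ then gives the factorization and the CM property directly. To repair your argument you would need to prove this structural isomorphism (or an equivalent statement about the $\L$-support of $M$) rather than relying on density of $\cO$-points of $\L$.
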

\begin{proof}
For (\ref{arith1}) of Definition \ref{arithmetic}, see \cite[Proposition 6.10]{CEGGPS2}. For (\ref{arith2}), suppose $M(\sigma^0)$ is nonzero, then there exists a character $\d\colon F^{\times}_{\p}\to\cO^{\times}$ with trivial modulo $p$ reduction, such that $\psi\d^{-2}|_{\cO^{\times}_{F_{\p}}}$ is the central character of $\sigma$. According to \cite[Proposition 6.12]{CEGGPS2}, there is a natural isomorphism of $R_{\p}^{\Box}$-modules
\[
\Hom_{\cO[\![K]\!]}^{\rm cont}(M_{\infty}\widehat{\otimes}_{\cO}(\mathbf{1}^{\rm univ})^{-1}\circ\det,(\sigma^0)^d)^d\cong\Hom_{\cO[\![K]\!]}^{\rm cont}(M_{\infty}\otimes_{\cO}(\d\circ\det),(\sigma^0)^d)^d\widehat{\otimes}_{\cO}\L^{\rm ur},
\]
where $\L$ acts on the right-hand side by the morphism 
\[\L\xrightarrow{\rm tw_{\d^{-1}}}\L\to\L^{\rm ur}. \]
So it's enough to show the $R^{\Box,\psi\e^{-1}}_{\p}$-action on 
\[
\Hom_{\cO[\![K]\!]}^{\rm cont}(M_{\infty}\otimes_{\cO}(\d\circ\det),(\sigma^0)^d)^d
\]
factors through $R^{\Box,\psi\e^{-1}}_{\p}(\sigma\otimes\d\circ\det)$. Now we can apply Corollary \ref{anycharacter}. The statement (\ref{arith3}) of Definition \ref{arithmetic} can be prove using argument of the same style as (\ref{arith2}), which we omit.
\end{proof}
\begin{corollary}
Let $x\colon R_{\infty}\to \overline{\Q}_p$ be a continuous $\cO$-algebra morphism, and let $\fm_x$ be its kernel. Define \[\Pi_{M,x}:=\Hom_{\cO}^{\rm cont}(M_{\infty},E)[\fm_x].\] Then $\Pi_{M,x}$ is nonzero.
\end{corollary}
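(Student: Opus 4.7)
The plan is to reformulate the corollary as a statement about the support of $M_\infty$ and then prove that this support is as large as possible. First, using the adjunction between continuous $\cO$-linear duals and completed tensor products, one sees that $\Pi_{M,x}$ is nonzero if and only if the quotient $M_\infty\,\widehat{\otimes}_{R_\infty,x}\cO_{E'}$ is not $\varpi$-torsion, where $E'$ is a finite extension of $E$ containing the image of $x$ (which we may absorb into $E$ by enlarging if necessary). Equivalently, $\fm_x$ must lie in the $R_\infty$-support of $M_\infty$, viewed as a point of $\Spec R_\infty[1/p]$. Since this support is Zariski closed, it suffices to exhibit a Zariski dense collection of points lying in it.

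The second step exploits the arithmetic action of Lemma \ref{M twisti arith}. By item (2) of Definition \ref{arithmetic}, combined with Corollary \ref{anycharacter} to handle central character twists, for every locally algebraic type $\sigma$ of $\GL_2(\Z_p)$ with appropriate central character the module $M_\infty(\sigma^0)$ is finitely generated and maximal Cohen--Macaulay over the quotient $R_\infty(\sigma)$ of $R_\infty$. Since $R_\infty(\sigma)$ is equidimensional (which in turn follows from the equidimensionality of the local potentially semi-stable deformation rings), whenever $M_\infty(\sigma^0)$ is nonzero its $R_\infty$-support fills out all of $\Spec R_\infty(\sigma)\subseteq\Spec R_\infty$. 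Nonvanishing of $M_\infty(\sigma^0)$ for sufficiently many $\sigma$ is guaranteed by the patching of classical automorphic forms on the definite quaternion algebra $B'$ of the corresponding weight and level, starting from the nonvanishing of $S_{\psi,\l^{\p}}(U^{\p},\cO)_{\fm}$ established in \S\ref{globalization}.

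Finally, the Zariski density of potentially semi-stable $2$-dimensional Galois representations in $\Spec R_{\p}^{\Box,\psi\e^{-1}}[1/p]$ (due to Colmez--Kisin--Chenevier in the two-dimensional setting), together with the faithful flatness of $R_\infty$ over $R_{\p}^{\Box,\psi\e^{-1}}$, implies that $\bigcup_{\sigma}\Spec R_\infty(\sigma)[1/p]$ is Zariski dense in $\Spec R_\infty[1/p]$. Combined with the previous paragraph, this gives $\Supp_{R_\infty}(M_\infty)\cap\Spec R_\infty[1/p]=\Spec R_\infty[1/p]$; in particular $\fm_x$ lies in the support, proving $\Pi_{M,x}\neq 0$. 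The main technical hurdle is establishing nonvanishing and the full-support property for enough types $\sigma$, which in the present setting amounts to invoking the patching construction and local-global compatibility in the style of \cite{CEGGPS2} and \cite{DPS-Crelle}; once this is in hand, the Cohen--Macaulay property together with the density statement combine essentially formally.
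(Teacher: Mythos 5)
Your overall strategy — twist $M_\infty$ so that the $R_\infty$-action becomes arithmetic (Lemma~\ref{M twisti arith}), and then deduce that $M_\infty$ has full support over $R_\infty$ so that every fibre is nonzero — is exactly the strategy the paper uses; the paper simply cites \cite[Theorem~6.5]{MR4077579} for the ``full support'' step rather than reproving it. Your attempt to unpack that citation, however, has a genuine gap at its key step.

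The problematic claim is that ``since $R_\infty(\sigma)$ is equidimensional, whenever $M_\infty(\sigma^0)$ is nonzero its support fills out all of $\Spec R_\infty(\sigma)$.'' Being maximal Cohen--Macaulay over an equidimensional ring only forces the support to be a \emph{union of irreducible components} of $\Spec R_\infty(\sigma)$, not all of them: for instance $k[x,y]/(xy)$ is equidimensional of dimension $1$ and $k[x,y]/(x,y^2)\twoheadleftarrow k[x,y]/(y)$ is a nonzero maximal Cohen--Macaulay module supported on a single branch. Consequently the density of $\bigcup_\sigma\Spec R_\infty(\sigma)[1/p]$ does not by itself give $\Supp_{R_\infty}(M_\infty)\supseteq\Spec R_\infty[1/p]$: you would still need to show that, for each $\sigma$, \emph{every} component of $\Spec R_\infty(\sigma)$ lies in $\Supp M_\infty(\sigma^0)$. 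That is precisely the nontrivial content of the cited theorem, and in the literature it is obtained not from Cohen--Macaulayness and equidimensionality alone but by additionally invoking condition~(\ref{arith3}) of Definition~\ref{arithmetic} (compatibility with the Hecke action via $\eta$) together with the Breuil--M\'ezard formalism, or equivalently the $p$-adic local Langlands input of \cite{CEGGPS2}. Were full support a formal consequence of the CM property, the ``all components are modular'' half of modularity lifting would be automatic, which it is not.

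Two smaller remarks. First, the paper's proof does not work over $R_\infty$ and the fixed-determinant ring $R_\p^{\Box,\psi\e^{-1}}$ directly: it twists $M_\infty$ by $(\mathbf{1}^{\rm univ})^{-1}\circ\det$, forming an $R_\infty\widehat{\otimes}_\cO\Lambda$-module arithmetic with respect to the \emph{unframed-determinant} ring $R_\p^{\Box}$, and applies the full-support theorem there; this sidesteps any delicacy about density of crystalline points in the fixed-determinant ring. Your use of Corollary~\ref{anycharacter} only addresses the auxiliary twisting needed inside the arithmetic-action verification, not this reduction. Second, your opening reformulation (nonvanishing of $\Pi_{M,x}$ $\Leftrightarrow$ $\fm_x\in\Supp_{R_\infty}M_\infty$) is fine, but strictly speaking it uses that $M_\infty$ is finitely generated over $R_\infty[\![K]\!]$, not over $R_\infty$, so the Nakayama-type argument should be made over $R_\infty[\![K]\!]$.
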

\begin{proof}
Suppose $y\colon\L\to\cO^{\times}$ corresponds to the trivial character. We define $x':=x\otimes y$. Then
\[
M_{\infty}\widehat{\otimes}_{\cO} (\mathbf{1}^{\rm univ})^{-1}\circ\det\otimes_{R_{\infty}\widehat{\otimes}_{\cO}\L,x'}\kappa(x')
\]
is non-zero by Lemma \ref{M twisti arith} and \cite[Theorem 6.5]{MR4077579}. So
\[
M_{\infty}\otimes_{R_{\infty},x}\kappa(x)
\]
is non-zero, which proves the corollary.
\end{proof}
\begin{proposition}\label{cocleofPi_x}
 Let $\rho$ be the Galois representation corresponding to the composition $R^{\Box,\psi\e^{-1}}_{\p}\to R_{\infty}\xrightarrow{x} \overline{\Q}_p$. 
\hfill \begin{enumerate}
 \item If $\rho$ is absolutely irreducible, then $\Pi_{M,x}\cong\Pi(\rho)^{\oplus n_x}$ for some $n_x\ge1$. 
 \item If  $\rho\cong\big(\begin{smallmatrix}\d_1&*\\0&\d_2\end{smallmatrix}\big)$ for unitary characters $\d_1,\d_2\colon\Q_p^{\times}\to E^{\times}$, then all the irreducible subquotients of $\Pi_{M,x}$ are the irreducible subquotients of 
\[
(\Ind^G_B\d_1\e\otimes\d_2)_{\rm cont}\oplus (\Ind^G_B\d_2\e\otimes\d_1)_{\rm cont}.
\]
\item Assume $\End_{G_{F_{\p}}}(\overline{r}|_{G_{F_{\p}}})=\F$. Suppose $\rho\cong\big(\begin{smallmatrix}\d_1&*\\0&\d_2\end{smallmatrix}\big)$ with $\d_1\d_2^{-1}\not\equiv 1$. If $\Pi$ is an irreducible closed subrepresentation of $\Pi_{M,x}$, then $\Pi$ is infinite dimensional and is a subquotient of $(\Ind^G_B\d_2\e\otimes\d_1)_{\rm cont}$.
\end{enumerate}
\end{proposition}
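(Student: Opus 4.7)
My plan is to exploit the arithmetic property of the action of $R_\infty\widehat{\otimes}_{\cO}\L$ on $M_\infty\widehat{\otimes}_{\cO}(\mathbf{1}^{\rm univ})^{-1}\circ\det$ established in Lemma~\ref{M twisti arith}. The key consequence is that for every locally algebraic type $\sigma$ of $K$, the $\sigma$-isotypic component of $\Pi_{M,x}$ is controlled by $R^{\Box,\psi\e^{-1}}_{\p}(\sigma\otimes\d\circ\det)\otimes_{R^{\Box,\psi\e^{-1}}_{\p},\,x}E$ for a suitable twist $\d$, and the spherical Hecke algebra $\cH(\sigma)$ acts through the morphism $\eta$ of \cite[Theorem 4.1]{CEGGPS1}. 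This pins down the $\sigma$-locally algebraic vectors in any irreducible subquotient of $\Pi_{M,x}$ in terms of the classical local Langlands correspondence applied to the Galois representation $\rho$.

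For part~(1), assume $\rho$ is absolutely irreducible. Following the strategy of \cite[Corollary 8.16]{DPS-Crelle} (see also \cite[Proposition 3.9]{tung2021automorphy}), I would first use \cite[Theorem 7.1]{pavskunas2021finiteness} to show that every irreducible subquotient of $\Pi_{M,x}$ has $\checkV$-image isomorphic to $\rho$ and hence is isomorphic to $\Pi(\rho)$ by the $p$-adic local Langlands correspondence. A capture argument exploiting that $M_\infty$ is projective in $\Mod^{\rm pro}_K(\cO)$, together with the vanishing of suitable $\Ext^1$-groups extracted from \cite{PaskunasIHES}, then lifts this to the desired splitting $\Pi_{M,x}\cong\Pi(\rho)^{\oplus n_x}$ for some $n_x\ge 1$.

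For parts~(2) and (3), the image of $x$ lies in the reducible locus of $R^{\Box,\psi\e^{-1}}_{\p}(\sigma)$ with semisimplification $\d_1\oplus\d_2$, and classical local Langlands identifies the Banach representations corresponding to points of this family as precisely the irreducible subquotients of $(\Ind^G_B\d_1\e\otimes\d_2)_{\rm cont}$ and $(\Ind^G_B\d_2\e\otimes\d_1)_{\rm cont}$; matching locally algebraic types across $x$ settles part~(2). For part~(3), the hypothesis $\End_{G_{F_{\p}}}(\overline{r}|_{G_{F_{\p}}})=\F$ removes the framing ambiguity while $\d_1\d_2^{-1}\not\equiv 1\pmod\varpi$ keeps the two characters distinct modulo $\varpi$, so the extension $\rho$ unambiguously has $\d_1$ as a sub and $\d_2$ as a quotient. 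Under the normalizations of $\checkV$ recalled in \S\ref{modpcor}, an irreducible \emph{closed subrepresentation} of $\Pi_{M,x}$ (as opposed to a mere subquotient) must have $\checkV$-image equal to the sub of $\rho$, which singles out $(\Ind^G_B\d_2\e\otimes\d_1)_{\rm cont}$ as the only possible parabolic induction from which it may come. Finite-dimensional subrepresentations are excluded because they would be of the form $\chi\circ\det$ with $\checkV=0$, and such an embedding would force $\d_1\d_2^{-1}\equiv 1\pmod\varpi$ through the central character, contradicting the hypothesis.

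The main obstacle is the final step of part~(3): identifying \emph{which} of the two parabolic inductions contributes the socle of $\Pi_{M,x}$, rather than some other composition factor, requires carefully tracking the sub/quotient direction of the extension $\rho$ through the arithmetic action and the normalizations of $\checkV$; ruling out finite-dimensional subrepresentations also requires an argument strictly finer than the capture-type argument used for subquotients in part~(1).
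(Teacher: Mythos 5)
The paper handles parts (1) and (2) simply by citing \cite[Theorem 7.1]{pavskunas2021finiteness}; you do essentially the same, though you also sketch (at unneeded length) the proof of that cited result. The real divergence is in part (3), and there your argument has a genuine gap.

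For part (3) the paper takes a mod-$\varpi$ route: set $\Theta := \Hom^{\rm cont}_{\cO}(M_\infty/\fm_x,\cO)$, note that $\Theta/\varpi$ embeds $G$-equivariantly into $\pi^M(\overline{\rho}) := (M_\infty/\fm_\infty)^\vee$, and invoke \cite[Theorem 7.7]{HW-JL1} to get $\pi^M(\overline{\rho}) \cong \pi(\overline{\rho})^{\oplus d}$. For an irreducible closed $\Pi\subset\Pi_{M,x}$, the lattice $\Theta_\Pi=\Theta\cap\Pi$ then has $\Theta_\Pi/\varpi$ embedding into $\pi(\overline{\rho})^{\oplus d}$, and the explicit structure of $\pi(\overline{\rho})$ recalled in \S\ref{modpcor} (its socle is an infinite-dimensional principal series, namely the reduction of $(\Ind^G_B\d_2\e\otimes\d_1)_{\rm cont}$) immediately gives both conclusions of part (3). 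You instead argue via $\checkV$, asserting that an irreducible closed subrepresentation ``must have $\checkV$-image equal to the sub of $\rho$.'' This is not justified: to draw a conclusion about subs of $\Pi_{M,x}$ from $\checkV$ you need to know the $G_{\Q_p}$-module structure of $\checkV(\Pi_{M,x})$ (e.g.\ that it is $\rho$-typic), which is nowhere established and is not a formal consequence of part (2). Moreover, since $\checkV$ on Banach representations is \emph{contravariant}, a closed sub of $\Pi_{M,x}$ would produce a \emph{quotient} of $\checkV(\Pi_{M,x})$, so even granting $\rho$-typicity your ``sub of $\rho$'' reading of the resulting character is on the wrong side; matching up the normalizations of \S\ref{modpcor} is precisely the delicate step you acknowledge but do not carry out.

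Your exclusion of finite-dimensional subrepresentations is also unconvincing. You claim a one-dimensional subrepresentation $\chi\circ\det$ would force $\d_1\d_2^{-1}\equiv 1 \pmod\varpi$ ``through the central character,'' but the central character of $\Pi_{M,x}$ only constrains $\chi^2$ in terms of $\d_1\d_2$, not $\d_1\d_2^{-1}$; no contradiction with the hypothesis is obtained this way. The paper sidesteps this entirely: since $\Theta_\Pi/\varpi$ is a nonzero subrepresentation of $\pi(\overline{\rho})$ and the socle of $\pi(\overline{\rho})$ (under $\End_{G_{F_\p}}(\overline{r}|_{G_{F_\p}})=\F$, which is needed precisely so that $\pi(\overline{\rho})$ is defined) is an infinite-dimensional principal series, $\Pi$ is automatically infinite-dimensional. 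I recommend replacing your part (3) argument with the mod-$\varpi$ embedding into $\pi(\overline{\rho})^{\oplus d}$.
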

\begin{proof}
The first two statements are \cite[Theorem 7.1]{pavskunas2021finiteness}. Let $\fm_x$ be the kernel of $x$. Then 
\[\Theta:=\Hom^{\rm cont}_{\cO}(M_{\infty}/\fm_x,\cO)\]
 is a $G$-invariant lattice in $\Pi_{M,x}$. Let $\fm_{\infty}$ be the maximal ideal of $R_{\infty}$ and let 
 \[\pi^M(\overline{\rho}):=(M_{\infty}/\fm_{\infty})^{\vee}.
 \]
  We have isomorphisms
\[\Theta/\varpi\cong ((M_{\infty}/\fm_x)_{\rm tf}/\varpi)^{\vee}\text{ and }\:\pi^M(\overline{\rho})\cong ((M_{\infty}/\fm_x)/\varpi)^{\vee},
\]
which induce an embedding $\iota\colon\Theta/\varpi\hookrightarrow\pi^M(\overline{\rho})$. If $\Pi\subset\Pi_{M,x}$ is an irreducible subrepresentation, then $\Theta_{\Pi}:=\Theta\cap \Pi$ is an open bounded $G$-invariant $\cO$-lattice in $\Pi$.
We have a $G$-equivariant embedding $\Theta_{\Pi}/\varpi\hookrightarrow\Theta/\varpi\hookrightarrow\pi^M(\overline{\rho})$. It follows from \cite[Theorem 7.7]{HW-JL1} that $\pi^M(\overline{\rho})\cong\pi(\overline{\rho})^{\oplus d}$ for some $d\ge1$. Therefore $\Theta_{\Pi}/\varpi$ is a subrepresentation of $\pi(\overline{\rho})$, and this proves the last two statements.
\end{proof}
\begin{theorem}\label{S(M)=L}
There is an $R_{\infty}[B^{\times}_{\p}\times G_{F_{\p}}]$-equivariant isomorphism 
\[\Shat^1(M_{\infty})\cong L_{\infty}.\] Let $x\colon R_{\infty}\to \overline{\Q}_p$ be a continuous $\cO$-algebra morphism, and let $\fm_x$ be its kernel. Define 
\[\Pi_{L,x}:=\Hom_{\cO}^{\rm cont}(L_{\infty},E)[\fm_x].\] Then $\check{\mathcal{S}}^1(\Pi_{M,x})$ is a closed subrepresentation of $\Pi_{L,x}$ and $\Pi_{L,x}/\check{\mathcal{S}}^1(\Pi_{M,x})$ is a finite dimensional $E$-space on which the subgroup of reduced norm $1$ elements in $\cO^{\times}_{B_{\p}}$ acts trivially.
\end{theorem}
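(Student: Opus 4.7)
The plan is to establish the patched isomorphism $\Shat^1(M_\infty)\cong L_\infty$ by commuting Scholze's functor with each of the operations used to construct the patched modules, and then to deduce the second assertion by specializing at $x$ together with the cited result of Dospinescu--Pa{\v{s}}k{\={u}}nas--Schraen. The key input is Scholze's geometric local-global compatibility at each finite Taylor--Wiles level; the rest is a careful chase through inverse and direct limits.

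\textbf{Patched isomorphism.} For each Taylor--Wiles level $N$, each open compact $U_\p\subset\cO_{B_\p}^{\times}$ and each $s\ge 1$, Scholze's main theorem (applied to the smooth admissible $\cO/\varpi^s$-representation $S_{\psi,\lambda^{\p}}(U(N)^{\p}U_\p,\cO/\varpi^s)_{\fm_{Q_N}}$ of $\GL_2(\Q_p)$) yields a $G_{F_\p}\times B_\p^{\times}$-equivariant and $R^{\psi\e^{-1}}_{F,Q_N}$-linear isomorphism
\[
\cS^1\bigl(S_{\psi,\lambda^{\p}}(U(N)^{\p}U_\p,\cO/\varpi^s)_{\fm_{Q_N}}\bigr)\cong \widehat{H}^1_{\psi,\lambda^{\p}}(U(N)^{\p}U_\p,\cO/\varpi^s)_{\fm_{Q_N}},
\]
since after localization at the non-Eisenstein ideal $\fm_{Q_N}$ the completed cohomology of the Shimura curve $X_{U(N)^{\p}U_\p}$ is computed by the étale cohomology of the sheaf Scholze attaches to the quaternionic automorphic forms on $B'$. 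Dualizing via (\ref{pondual}), base-changing along the formally smooth morphism $R^{\psi\e^{-1}}_{F,Q_N}\to R^{\Box,\psi\e^{-1}}_{F,Q_N}$, taking the inverse limit in $U_\p$ and $s$, forming the product over $N\in I_\fa$, applying ultrafilter localization at the chosen point $x$, and passing to the inverse limit over pairs $(U_\p,\fa)$ then produces the desired $R_\infty[B_\p^{\times}\times G_{F_\p}]$-equivariant isomorphism $\Shat^1(M_\infty)\cong L_\infty$. The compatibilities needed are formal: $\Shat^1$ commutes with inverse limits of compact objects (since $\cS^1$ preserves direct limits of smooth representations), with base change along formally smooth maps, and with the $U_\p$-coinvariants (owing to the projectivity of $M_\infty$ in $\Mod^{\rm pro}_{K}(\cO)$ coming from the patching construction).

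\textbf{Specialization.} For the second statement, reducing the patched isomorphism modulo $\fm_x$ and taking continuous $E$-valued duals identifies $\Pi_{L,x}$ with the admissible unitary $E$-Banach representation attached to $\Shat^1(M_\infty)/\fm_x$. Comparing this with $\check{\mathcal{S}}^1(\Pi_{M,x})$---equivalently, controlling the natural map $\Shat^1(M_\infty)/\fm_x\to \Shat^1(M_\infty/\fm_x)$ at the Banach level---is exactly the content of \cite[Theorem 8.10]{DPS-Crelle}, already invoked in the introduction: it provides the closed embedding $\check{\mathcal{S}}^1(\Pi_{M,x})\hookrightarrow \Pi_{L,x}$ with finite-dimensional cokernel, and identifies the obstruction as coming from the trivial $\GL_2(\Q_p)$-subquotients of $M_\infty/\fm_x\,{\rm mod}\,\varpi$, on which $D^{\times}$ acts through its reduced norm (so the reduced-norm-one subgroup acts trivially). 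The hard part of the proof is the finite-level step: verifying that Scholze's construction is compatible with every operation used in patching, most delicately with the ultrafilter product and with reduction modulo the open ideals $\fa\subset S_\infty$, so that the compatibilities glue into a single equivariant isomorphism at infinite level.
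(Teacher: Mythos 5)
The paper's proof of this theorem is a one-line citation to Theorem~8.10 and Lemma~8.14 of \cite{DPS-Crelle}, so in effect you are reconstructing the argument behind that reference. Your second paragraph (specialization at $x$) is correct and matches what is cited. The first paragraph is, however, not as routine as the phrase ``a careful chase through inverse and direct limits'' suggests, and two of the claimed compatibilities are not justified as stated.

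First, the finite-level identification
\[
\cS^1\bigl(S_{\psi,\lambda^{\p}}(U(N)^{\p}U_\p,\cO/\varpi^s)_{\fm_{Q_N}}\bigr)\;\cong\;H^1_{\et}\bigl((X_{U(N)^\p U_\p})_{\overline F},\cO/\varpi^s\bigr)_{\fm_{Q_N}}
\]
is not a direct application of Scholze's theorem: what one gets a priori is an exact sequence (resp.\ spectral sequence) also involving $\cS^0$ and $\cS^2$ of the boundary/lower pieces, and you need to argue these contributions are controlled after localizing at the non-Eisenstein ideal $\fm_{Q_N}$ (using the hypotheses on $\overline{r}$ from the globalization step). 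You should make this vanishing explicit rather than quoting Scholze's theorem as if it gave an isomorphism outright. Second, the justification you give for commuting $\Shat^1$ with the $U_\p$-coinvariants step of the patching---namely ``owing to the projectivity of $M_\infty$ in $\Mod^{\rm pro}_K(\cO)$''---is circular: that projectivity is an \emph{output} of the Taylor--Wiles--Kisin construction and cannot be invoked to justify a step that occurs inside the construction of $M_\infty$ itself. What is actually needed is to check at each finite patching datum $(U_\p,\fa,N)$ that reduction mod $\fa$, taking $U_\p$-coinvariants, the ultrafilter product, and the limit over $(U_\p,\fa)$ are each compatible with the finite-level isomorphism, and then that $\Shat^1$ commutes with the final $\varprojlim$ by dualizing the commutation of $\cS^1$ with direct limits; the projectivity of $M_\infty$ plays a role later but not here. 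These are precisely the points handled in \cite[\S 8]{DPS-Crelle}, and the paper sidesteps them by citing the result directly; your sketch identifies the right ingredients but papers over the two places where the real work lives.
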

\begin{proof}
This follows from Theorem 8.10 and Lemma 8.14 of  \cite{DPS-Crelle}.
\end{proof}
\begin{corollary}\label{S(Pi)typic}
Suppose $\Pi$ is an absolutely  irreducible non-ordinary unitary Banach representation of $\GL_2(\Q_p)$.  Let $\rho$ be the Galois representation corresponding to it. Then $\Shat^1(\Pi)$ is $\rho$-typic, i.e., there exists a unitary Banach representation ${\rm JL}(\rho)$ of $D^{\times}$ and a $G_{\Q_p}\times D^{\times}$-isomorphism
\[
\Shat^1(\Pi)\cong \rho\boxtimes {\rm JL}(\rho).
\]
\end{corollary}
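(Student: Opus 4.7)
The plan is to specialize the patched factorization of Proposition~\ref{L=2L'} at a point $x\colon R_{\infty}\to E$ that recovers $\Pi(\rho)$, thereby producing an identification $\Pi_{L,x}\cong\rho\,\widehat{\otimes}_E\,\Pi_{L',x}$ in which the $G_{\Q_p}$- and $D^{\times}$-actions are cleanly separated, and then extract the $\rho$-typic structure of $\Shat^1(\Pi(\rho))$ by a Schur-type argument. First I would globalize $\overline{\rho}$ as in Section~\ref{globalization}, producing a totally real field $F$, a place $\p\mid p$ with $F_{\p}\cong\Q_p$, a modular $\overline{r}$ with $\overline{r}|_{G_{F_{\p}}}\cong\overline{\rho}$, and the patched modules $M_{\infty}$, $L_{\infty}$, $L'_{\infty}$ over $R_{\infty}$. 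After twisting $\rho$ if necessary so that $\det\rho=\psi\e^{-1}$, choose an $\cO$-algebra homomorphism $x\colon R_{\infty}\to E$ whose composition with $R_{\p}^{\Box,\psi\e^{-1}}\to R_{\infty}$ is the framed deformation associated to $\rho$; such an $x$ exists since the auxiliary factors $R_v^{\Box,\psi\e^{-1}}(\lambda_v)$ for $v\mid p$, $v\neq\p$ have $\cO$-points by the choice of $\lambda_v$.

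Next, Proposition~\ref{cocleofPi_x}(1) identifies $\Pi_{M,x}\cong\Pi(\rho)^{\oplus n_x}$ for some $n_x\ge 1$, so by the additivity of $\Shat^1$ and Theorem~\ref{S(M)=L}, $\Shat^1(\Pi(\rho))^{\oplus n_x}\cong\Shat^1(\Pi_{M,x})$ is a closed $G_{\Q_p}\times D^{\times}$-subrepresentation of $\Pi_{L,x}$ with finite-dimensional cokernel. Specializing the isomorphism $L_{\infty}\cong(\rho_{\p}^{\Box})^{*}\boxtimes_{R_{\p}^{\Box,\psi\e^{-1}}}L'_{\infty}$ of Proposition~\ref{L=2L'} at $x$ and passing to continuous $E$-duals produces a $G_{\Q_p}\times D^{\times}$-equivariant identification
\[
\Pi_{L,x}\;\cong\;\rho\,\widehat{\otimes}_E\,\Pi_{L',x},
\]
in which $G_{\Q_p}$ acts only through the factor $\rho$ and $D^{\times}$ acts only on $\Pi_{L',x}$.

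The technical heart of the proof is then the following elementary fact: any closed $G_{\Q_p}$-stable subspace $W\subseteq\rho\,\widehat{\otimes}_E\,\Pi_{L',x}$ has the form $\rho\,\widehat{\otimes}_E\,W'$ for some closed $D^{\times}$-stable $W'\subseteq\Pi_{L',x}$. Indeed, any $w\in W$ generates a finite-dimensional $G_{\Q_p}$-subrepresentation sitting inside $\rho\otimes V''$ for some two-dimensional subspace $V''\subseteq\Pi_{L',x}$, and absolute irreducibility of $\rho$ together with Jacobson density forces this subrepresentation to equal $\rho\otimes W_{w}'$ for some $W_{w}'\subseteq V''$; one takes $W'$ to be the closed linear span of the $W_{w}'$. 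Applying this to $W=\Shat^1(\Pi(\rho))^{\oplus n_x}$ gives a $G_{\Q_p}\times D^{\times}$-equivariant isomorphism
\[
\Shat^1(\Pi(\rho))^{\oplus n_x}\;\cong\;\rho\,\widehat{\otimes}_E\,V
\]
for some closed $D^{\times}$-stable $V\subseteq\Pi_{L',x}$.

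To finish, set $\JL(\rho):=\Hom_{E[G_{\Q_p}]}^{\rm cont}(\rho,\Shat^1(\Pi(\rho)))$, equipped with its natural unitary Banach structure and continuous $D^{\times}$-action. The evaluation map $\rho\,\widehat{\otimes}_E\,\JL(\rho)\to\Shat^1(\Pi(\rho))$ is $G_{\Q_p}\times D^{\times}$-equivariant; injectivity follows from absolute irreducibility of $\rho$ via Jacobson density, and surjectivity follows by applying $\Hom_{G_{\Q_p}}(\rho,-)$ to the previous isomorphism and using $\End_{G_{\Q_p}}(\rho)=E$ to identify $V\cong\JL(\rho)^{\oplus n_x}$, after which the displayed isomorphism descends to a single summand. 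The main obstacle I anticipate is the globalization and realization step: ensuring that the given $\rho$ (up to a harmless twist) is truly the specialization $\rho_x$ of the patched deformation at some $x\colon R_{\infty}\to E$, which rests on nonemptiness of the chosen components of the local deformation rings at the auxiliary places $v\mid p$, $v\ne\p$ and on carefully matching central characters and determinants through twists.
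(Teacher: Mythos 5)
Your proposal is correct and follows essentially the same route as the paper: globalize $\overline{\rho}$, twist to match determinants, specialize the patched factorization of Proposition~\ref{L=2L'} at a point $x$ extending the local deformation corresponding to $\rho$, and deduce that $\Shat^1(\Pi)$ inherits $\rho$-typicity as a closed subrepresentation of $\Pi_{L,x}\cong\rho\boxtimes\Pi_{L',x}$ (via Theorem~\ref{S(M)=L}). The only real difference is that where the paper cites \cite[Proposition 5.4]{Scholze} for the fact that $\rho$-typicity descends to closed subrepresentations, you spell out a self-contained Jacobson-density argument for that lemma, and where the paper invokes faithful flatness of $R_{\infty}$ over $R_{\p}^{\Box,\psi\e^{-1}}$ to produce $x$, you argue equivalently via nonemptiness of the auxiliary local deformation rings.
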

\begin{proof}
Choose a $G_{\Q_p}$-invariant $\cO$-lattice $\L$ of $\rho$, and let $\overline{\rho}\colon G_{\Q_p}\to\End_{\F}(\L/\varpi)$ be the modulo $\varpi$ reduction of $\overline{\rho}\colon G_{\Q_p}\to\End_{\cO}(\L)$. We patch the completed cohomologies from this $\overline{\rho}$. Since $\Shat^i$ and the $p$-adic Langlands correspondence are compatible with taking twist (see \cite[Lemma 7.4]{DPS-Crelle}), we can assume $\det\rho=\psi_{\p}\e^{-1}$. Suppose that $y\colon R_{\p}^{\Box,\psi\e^{-1}}\to E$ corresponds to $\rho$. Since $R_{\infty}$ is faithfully flat over $R_{\p}^{\Box,\psi\e^{-1}}$, there exists some $x\colon R_{\infty}\to \overline{\Q}_p$ extending $y$. It follows from Proposition \ref{L=2L'} that 
\[\Pi_{L,x}\cong\rho\boxtimes \Pi_{L',x}.
\]
So $\Shat^1(\Pi)$ is also $\rho$-typic because it's a sub-representation of $\Pi_{L,x}$ (See \cite[Proposition 5.4]{Scholze}).
\end{proof}
\begin{corollary}\label{O_Dalg}
Suppose $\Pi$ is an absolutely  irreducible unitary Banach representation of $\GL_2(\Q_p)$.
Let $\sigma'=\Sym^b E^2\otimes\Nrd^a\otimes \sigma'_{\rm sm}$ be a locally algebraic type of $\cO_D^{\times}$. Suppose
\begin{equation}\label{algneq0}\Hom_{\cO^{\times}_{D}}(\sigma',\Shat^1(\Pi))\neq0.\end{equation}
Let $\tau$ be the inertial type corresponding to $\sigma'_{\rm sm}$ (c.f. \cite[Theorem 3.3]{Gee-Geraghty}).
\hfill\begin{enumerate}
\item[(a)]  If $\Pi$ is non-ordinary, let $\rho$ be the Galois representation corresponding to it. Then $\rho$ is potentially semi-stable of Hodge--Tate weight $(1-a,-a-b)$ and has inertial type $\tau$.
\item[(b)] If $\Pi\cong (\Ind^G_B\d_2\e\otimes\d_1)_{\rm cont}$ for unitary characters $\d_1,\d_2\colon\Q^{\times}_p\to E^{\times}$ with $\d_1/\d_2\neq\e^{\pm1}$, we assume further $\d_1/\d_2\not\equiv1\pmod\varpi$. Let $\rho\cong\big(\begin{smallmatrix}\d_1&*\\0&\d_2\end{smallmatrix}\big)$ be the (unique) nonsplit extension of $\d_2$ by $\d_1$. Then $\rho$ is potentially semi-stable of Hodge--Tate weight $(1-a,-a-b)$ and has inertial type $\tau$.
\item[(c)] If $\Pi\cong\d\circ\det\otimes \widehat{\rm St}$, where $\widehat{\rm St}$ is the universal unitary completion of the smooth Steinberg representation of $G$, we let $\rho\cong\big(\begin{smallmatrix}\d&*\\0&\d\e^{-1}\end{smallmatrix}\big)$ be a nonsplit extension of $\d\e^{-1}$ by $\d$. Then $\rho$ is potentially semi-stable of Hodge--Tate weight $(1-a,-a-b)$ and has inertial type $\tau$.
\item[(d)]  If $\Pi\cong (\Ind^G_B\d\e\otimes\d\e^{-1})_{\rm cont}$ for a unitary character $\d$,  we let  $\rho\cong\big(\begin{smallmatrix}\d\e^{-1}&*\\0&\d\end{smallmatrix}\big)$ be a nonsplit extension of $\d$ by $\d\e^{-1}$. Then $b=0$, $\sigma'_{\rm sm}\cong \chi\circ\Nrd$ for some smooth character $\chi$ and $\d=\e^a\chi$.
\end{enumerate}
\end{corollary}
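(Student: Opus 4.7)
The strategy is to use the patching setup of \S\ref{global} to embed $\Shat^1(\Pi)$ into $\Pi_{L,x}$ and then apply Corollary \ref{anycharacter} to read off the potentially semi-stable properties of $\rho$. First I would globalize $\overline{\rho}$ as in \S\ref{globalization}, producing the patched modules $M_{\infty}$, $L_{\infty}$ and the ring $R_{\infty}$, which is faithfully flat over $R_{\p}^{\Box,\psi\e^{-1}}$. The point $y\colon R_{\p}^{\Box,\psi\e^{-1}}\to E$ corresponding to the prescribed $\rho$ lifts to some $x\colon R_{\infty}\to\overline{\Q}_p$. In case (a), Proposition \ref{cocleofPi_x}(1) gives $\Pi_{M,x}\cong\Pi^{\oplus n_x}$, so $\Pi$ is a direct summand of $\Pi_{M,x}$. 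In cases (b)--(d), the prescribed $\rho$ is a non-split reducible extension; using Proposition \ref{cocleofPi_x}(2)--(3) together with the block structure of \S\ref{block}, I would verify that $\Pi$ appears as a closed subrepresentation of $\Pi_{M,x}$ by examining the Jordan--H\"older constituents against the shape of $\Pi$.

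Applying the $\delta$-functor $\{\Shat^i\}$ (Proposition \ref{deltafunctor}) to the exact sequence $0\to\Pi\to\Pi_{M,x}\to Q\to 0$ and combining with Theorem \ref{S(M)=L} yields a canonical map $\Shat^1(\Pi)\to\Pi_{L,x}$, whose injectivity I would confirm using Lemma \ref{S^0} to control $\Shat^0(Q)$: the constituents of $Q$ are characters or (subquotients of) principal series in the block of $\Pi$, on which $\Shat^0$ either vanishes or is a finite-dimensional character contribution that does not interfere. Granted injectivity, the hypothesis (\ref{algneq0}) forces $\Hom_{\cO_D^\times}(\sigma',\Pi_{L,x})\neq 0$, and by Pontrjagin duality this translates to $L_{\infty}((\sigma'\otimes\chi_{\p}\circ\Nrd)^0)/\fm_x\neq 0$ for a suitable locally algebraic character $\chi_{\p}$ aligning central characters. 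By Corollary \ref{anycharacter}, the pullback of $y$ to $R_{\p}^{\Box,\psi\e^{-1}}$ lies in $\Spec R_{\p}^{\Box,\psi\e^{-1}}(\sigma'\otimes\chi_{\p}\circ\Nrd)[1/p]$, and untwisting via the isomorphism ${\rm tw}_{\chi_{\p}^{-1}}$ of Proposition \ref{L_infty twist} shows that $\rho$ is potentially semi-stable with Hodge--Tate weights $(1-a,-a-b)$ and inertial type $\tau$, settling (a)--(c).

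For (d), the prescribed $\rho\cong\smatr{\d\e^{-1}}{*}{0}{\d}$ has characters differing by $\e^{-1}$; writing $\d=\e^{a_d}\chi_\d$ for some smooth character $\chi_\d$, the Hodge--Tate weights of $\rho$ are $\{-a_d,1-a_d\}$ and its inertial type is $\chi_\d|_{I_{\Q_p}}^{\oplus 2}$. Matching against the weights $(1-a,-a-b)$ obtained above forces $a=a_d$ and $b=0$, and the inertial local Langlands correspondence gives $\sigma'_{\rm sm}\cong\chi_\d\circ\Nrd$, so $\chi=\chi_\d$ completes the proof. The main difficulty I anticipate lies in step (ii) above in the reducible cases (b)--(d): establishing that $\Pi$ really embeds as a closed subrepresentation of $\Pi_{M,x}$, and controlling $\Shat^0(Q)$ finely enough to guarantee that the induced map on $\Shat^1$ is injective. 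In the non-ordinary case (a) this is automatic because $\Pi_{M,x}$ is $\Pi$-isotypic, but in the ordinary regime the structure of $\Pi_{M,x}$ is considerably more delicate and must be analysed via the Jordan--H\"older filtration, the block decomposition, and Lemma \ref{S^0} case by case.
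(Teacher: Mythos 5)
Your overall strategy matches the paper's: embed $\Pi$ into $\Pi_{M,x}$ via Proposition~\ref{cocleofPi_x}, apply the $\delta$-functor of Proposition~\ref{deltafunctor}, and push the hypothesis \eqref{algneq0} through Theorem~\ref{S(M)=L} and Proposition~\ref{L_infty twist}/Corollary~\ref{anycharacter} to read off potential semi-stability. However, there is a genuine gap in your step (ii): the map $\Shat^1(\Pi)\to\Shat^1(\Pi_{M,x})$ is \emph{not} injective in general. Its kernel is precisely
\[
A:=\Shat^0(\Pi_{M,x}/\Pi)/\Shat^0(\Pi_{M,x}),
\]
and while in cases (a) and (b) this vanishes (in (a) because the sequence splits and $\Shat^0$ of a non-ordinary $\Pi$ is zero, in (b) because Lemma~\ref{S^0} kills $\Shat^0$ of all the relevant constituents), in cases (c) and (d) the quotient $\Pi_{M,x}/\Pi$ contains one-dimensional constituents of the form $\d\circ\det$ on which $\Shat^0$ is the one-dimensional $D^\times$-representation $\d\circ\Nrd$, so $A$ can be nonzero. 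Dismissing this as a ``finite-dimensional character contribution that does not interfere'' is exactly where the argument breaks: your claim $\Hom_{\cO_D^\times}(\sigma',\Pi_{L,x})\neq 0$ need not follow from \eqref{algneq0} when $A\neq 0$.

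The paper repairs this by a case split that you do not perform. It first shows, via Lemma~\ref{S^0} and Proposition~\ref{cocleofPi_x}(2)--(3), that $A$ can be nonzero only in cases (c) and (d), and that when nonzero every subquotient of $A$ is $\d\circ\Nrd$. Then: \emph{if} $\Hom_{\cO_D^\times}(\sigma',A)\neq 0$, one reads off $b=0$ and $\sigma'_{\rm sm}\cong\chi\circ\Nrd$ with $\d=\e^a\chi$ directly from this explicit description --- this \emph{is} the desired conclusion in case (d), and it is consistent with the potential semi-stability statement in case (c). \emph{Otherwise}, $\Hom_{\cO_D^\times}(\sigma',-)$ vanishes on the kernel, so the induced map on $\Hom(\sigma',-)$ is injective and the rest of the patching argument proceeds as you describe. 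Your alternative treatment of (d) via Hodge--Tate weight matching is a reasonable idea, but it presupposes the potential semi-stability output from the patching step, which is only available in the $\Hom(\sigma',A)=0$ branch --- precisely the branch where the paper's analysis already handles (d) trivially. You therefore need to add the $\Hom(\sigma',A)\neq 0$ branch (identifying $\sigma'$ explicitly from the structure of $A$) to make the argument complete.
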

\begin{proof}
Choose a $G_{\Q_p}$-invariant $\cO$-lattice $\L$ of $\rho$, and let $\overline{\rho}\colon G_{\Q_p}\to\End_{\F}(\L/\varpi)$ be the modulo $\varpi$ reduction of $\rho\colon G_{\Q_p}\to\End_{\cO}(\L)$. Note that we can choose $\L$ such that $\overline{\rho}$ is indecomposable. We patch the completed cohomologies from this $\overline{\rho}$. By Corollary \ref{anycharacter}, we can assume $\det\rho=\psi_{\p}\e^{-1}$. Suppose that $y\colon R_{\p}^{\Box,\psi\e^{-1}}\to E$ corresponds to $\rho$. Since $R_{\infty}$ is faithfully flat over $R_{\p}^{\Box,\psi\e^{-1}}$, there exists some $x\colon R_{\infty}\to \overline{\Q}_p$ extending $y$. Then $\Pi$ is a sub-representation of $\Pi_{M,x}$ by Proposition \ref{cocleofPi_x}. By Proposition \ref{deltafunctor} we have an exact sequence 
\[
0\to\Shat^0(\Pi_{M,x}/\Pi)/\Shat^0(\Pi_{M,x})\to\Shat^1(\Pi)\to\Shat^1(\Pi_{M,x}).
\]
We claim that  $\Shat^0(\Pi_{M,x}/\Pi)/\Shat^0(\Pi_{M,x})\neq0$ only when $\Pi\cong (\Ind^G_B\d\e\otimes\d\e^{-1})_{\rm cont}$ or $\Pi\cong\d\circ\det\otimes \widehat{\rm St}$. To see this, if $\Pi$ is non-ordinary, then $\Pi_{M,x}\cong \Pi^{\oplus n_x}$ for some $n_x\ge1$. So the exact 
\[
0\to \Pi\to \Pi_{M,x}\to \Pi_{M,x}/\Pi\to0
\]
splits. So $\Shat^0(\Pi_{M,x}/\Pi)/\Shat^0(\Pi_{M,x})=0$. If $\Pi$ is ordinary,  then all the irreducible subquotients of $\Pi_{M,x}$ are the irreducible subquotients of 
\[
(\Ind^G_B\d_1\e\otimes\d_2)_{\rm cont}\oplus (\Ind^G_B\d_2\e\otimes\d_1)_{\rm cont}.
\]
By Lemma \ref{S^0}, $\Shat^0(\Pi_{M,x}/\Pi)/\Shat^0(\Pi_{M,x})$ is nonzero only when $\Pi\cong (\Ind^G_B\d\e\otimes\d\e^{-1})_{\rm cont}$ or $\Pi\cong\d\circ\det\otimes \widehat{\rm St}$. And in these cases, all the subquotients of $\Shat^0(\Pi_{M,x}/\Pi)/\Shat^0(\Pi_{M,x})$ are isomorphic to $\d\circ\Nrd$ as $D^{\times}$-representations.

If
\begin{equation}\label{S0alg}
\Hom_{\cO^{\times}_{D}}(\Sym^b E^2\otimes\Nrd^a\otimes \sigma'_{\rm sm},\Shat^0(\Pi_{M,x}/\Pi)/\Shat^0(\Pi_{M,x}))\neq0,
\end{equation}
then $b=0$ and $\Nrd^a\otimes\sigma'_{\rm sm}=\d\circ\Nrd$, i.e., $\sigma'_{\rm sm}\cong \chi\circ\Nrd$ for some smooth character $\chi$ and $\d=\e^a\chi$.

If the left-hand side of  (\ref{S0alg}) is zero, we have 
\begin{equation}
\Hom_{\cO^{\times}_{D}}(\Sym^b E^2\otimes\Nrd^a\otimes \sigma'_{\rm sm},\Shat^1(\Pi_{M,x}))\neq0,
\end{equation}
 Therefore according to Theorem \ref{S(M)=L} we have
\[\Hom_{\cO^{\times}_{D}}(\Sym^b E^2\otimes\Nrd^a\otimes \sigma'_{\rm sm},\Pi_{L,x})\neq0.\]
Combined with \cite[Proposition 2.22]{Paskunas-BM} we conclude that \[L_{\infty}(\Sym^b E^2\otimes\Nrd^a\otimes \sigma'_{\rm sm})^0)\otimes_{R_{\infty,x}}\overline{\Q}_p\neq0.\] Hence by Theorem \ref{L_infty twist}, $\fm_y$ is in $\Spec R_{\p}^{\Box,\psi\e^{-1}}(\s')$.
\end{proof}

\subsection{Locally algebraic vectors}
Let $\tilde{\e}$ be the Teichmüller lifting of the modulo $p$ cyclotomic character.  From now on, we define $\eta\colon \Q_p^{\times}\to E^{\times}$ to be the character $\sqrt{\tilde{\e}^{-1}\e}$.

 Let $D^{\times,1}$ be the subgroup of $D^{\times}$ consisting of elements with reduced norm equal to $1$. Let $\Pi$ be a unitary Banach representation of $D^{\times}$. We write $\Pi^{\onealg}$ for the subspace of $D^{\times,1}$-locally algebraic vectors in $\Pi$.
\begin{lemma}\label{findim}
Let $\Pi$ be a unitary Banach representation of $D^{\times}$ with a locally algebraic central character. Then $\Pi^{\onealg}$ is a direct sum of finite-dimensional irreducible $D^{\times}$-representations of the form $\Sym^b E^2 \otimes\Nrd^a \otimes V\otimes\eta^i\circ\Nrd$ for some $b\in\Z_{\ge0}$, $a\in\Z$, $i\in\{0,1\}$ and irreducible smooth representation $V$ of $D^{\times}$. Conversely, any finite-dimensional sub-representation of $\Pi$ is locally algebraic for the $D^{\times,1}$-action. 

\end{lemma}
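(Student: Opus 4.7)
The plan is to decompose $\Pi^{\onealg}$ first as a $D^{\times,1}$-representation and then upgrade to $D^{\times}$ by analysing how the centre and a uniformizer interact with the $D^{\times,1}$-isotypic decomposition. Since $D^{\times,1}$ is a compact $p$-adic Lie group that becomes isomorphic to $\SL_{2}$ after base change of $D$ to the unramified quadratic extension $\Q_{p^{2}}\subset E$, the standard algebraic--smooth factorisation for locally algebraic representations of a compact $p$-adic Lie group with semisimple simply connected algebraic hull produces a decomposition
\[
\Pi^{\onealg}\;=\;\bigoplus_{b\ge 0}\Sym^{b}E^{2}\otimes_{E}M_{b},
\]
in which each $M_{b}:=\Hom_{\mathfrak{d}^{0}}(\Sym^{b}E^{2},\Pi^{\onealg})$ carries a smooth $D^{\times,1}$-action. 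Since $D^{\times,1}$ is normal in $D^{\times}$ and $\Sym^{b}E^{2}$ is up to isomorphism the only absolutely irreducible algebraic $D^{\times,1}$-representation of its highest weight, each summand is $D^{\times}$-stable; letting $D^{\times}$ act on $\Sym^{b}E^{2}$ via the canonical algebraic extension then endows $M_{b}$ with a $D^{\times}$-action whose restriction to $D^{\times,1}$ is smooth.

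Next I would analyse how the centre $Z=\Q_{p}^{\times}$ acts on $M_{b}$. Writing $\zeta=\zeta_{\rm sm}\cdot z^{c}$ as the locally algebraic factorisation of the central character, $Z$ acts on $M_{b}$ via $\zeta_{\rm sm}(z)\cdot z^{c-b}$. I would then choose $i\in\{0,1\}$ and $a\in\Z$ with $c-b=2a+i$ and form the twist
\[
V_{b}\;:=\;M_{b}\otimes \Nrd^{-a}\otimes(\eta^{-i}\circ\Nrd).
\]
Using $\eta^{2}=\tilde{\e}^{-1}\e$ one checks that $\Nrd^{-a}\otimes(\eta^{-i}\circ\Nrd)$ exactly cancels the algebraic factor $z^{c-b}$ on $Z$, so that $V_{b}$ has smooth central character. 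Combined with the smoothness of the $D^{\times,1}$-action this shows that every vector of $V_{b}$ is fixed by an open subgroup of $\cO_{D}^{\times}$ (take the intersection of an open stabiliser in $D^{\times,1}$ with an open stabiliser in $\Z_{p}^{\times}$), so $V_{b}$ is a smooth unitary $D^{\times}$-representation. Because $D$ is a division algebra, $D^{\times}/Z$ is compact, and so $V_{b}$ decomposes as a direct sum of finite-dimensional smooth irreducible $D^{\times}$-representations; untwisting yields the decomposition of $\Pi^{\onealg}$ into summands of the prescribed form $\Sym^{b}E^{2}\otimes \Nrd^{a}\otimes V\otimes \eta^{i}\circ\Nrd$. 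For the converse, let $\sigma\subset\Pi$ be a finite-dimensional $D^{\times}$-subrepresentation. Then $\sigma|_{D^{\times,1}}\colon D^{\times,1}\to\GL(\sigma)$ is a continuous homomorphism from a compact $p$-adic Lie group, hence automatically analytic; its derivative is a finite-dimensional representation of $\mathfrak{d}^{0}\otimes_{\Q_{p}}E\cong \mathfrak{sl}_{2}$, which is automatically algebraic and integrates to an algebraic representation of the simply connected algebraic group that agrees with $\sigma|_{D^{\times,1}}$ on an open subgroup.

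The main obstacle is the twisting/bookkeeping in the second paragraph: checking that a single pair $(a,i)$ really does render the whole of $M_{b}$ smooth (i.e.\ that the non-smooth part of the $D^{\times}$-action on $M_{b}$ is exhausted by the centre, with no residual contribution from lifts of a uniformiser of $\Q_{p}$), and in keeping track of the sign ambiguity in $\eta(p)$. The remainder is a matter of assembling standard material on locally algebraic representations of compact $p$-adic Lie groups together with the Schur-theoretic decomposition of smooth representations of $D^{\times}$ compact modulo centre.
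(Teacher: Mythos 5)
Your argument is correct and arrives at the same statement by a genuinely different route from the paper. Both proofs start from Emerton's semisimplicity of $\Pi^{\onealg}$ as a $D^{\times,1}$-representation and the algebraic--smooth factorisation $\Sym^b E^2\otimes(\text{smooth})$. Where they diverge is in the passage from $D^{\times,1}$ to $D^{\times}$: the paper extends each irreducible $D^{\times,1}$-constituent $W=\Sym^bE^2\otimes V'$ to $D^{\times,1}\Q_p^{\times}$ (using the central character of $\Pi$, with the same $c-b=2a+i$ bookkeeping you use), then applies the projection formula and Frobenius reciprocity to get a surjection $\sum\Ind^{D^{\times}}_{D^{\times,1}\Q_p^{\times}}W\to\Pi^{\onealg}$, and concludes via semisimplicity of finite-index induction in characteristic $0$. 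You instead act on the isotypic multiplicity space $M_b$ directly, twist it by $\Nrd^{-a}\otimes\eta^{-i}\circ\Nrd$ to neutralise the algebraic part of the central character, and then decompose the resulting smooth $D^{\times}$-representation using compactness of $D^{\times}/Z$. Your approach is somewhat more structural (the whole of $M_b$ is handled at once) while the paper's is somewhat more economical (no need to topologise or name $M_b$).

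One small reassurance: the ``main obstacle'' you flag --- that a single twist might not render all of $M_b$ smooth, because of a uniformiser of $\Q_p$ lifting nontrivially --- is in fact not an obstacle. Since $\Nrd\colon D^{\times}\to\Q_p^{\times}$ is surjective and $D^{\times,1}\Q_p^{\times}=\Nrd^{-1}(\Q_p^{\times 2})$, the subgroup $D^{\times,1}\Q_p^{\times}$ is a closed finite-index, hence open, subgroup of $D^{\times}$. Once a vector of $V_b$ is fixed by an open subgroup of $D^{\times,1}$ and by an open subgroup of $\Q_p^{\times}$, it is fixed by an open subgroup of $D^{\times,1}\Q_p^{\times}$ and therefore of $D^{\times}$; there is no further contribution to smoothness to account for. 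The remaining bookkeeping in your second paragraph is exactly the $c-b=2a+i$ computation the paper also performs, and it checks out ($Z$ acts on $M_b$ via $\zeta_{\rm sm}(z)\,z^{c-b}$, and $\Nrd^{-a}\otimes\eta^{-i}\circ\Nrd$ contributes $z^{-2a-i}$ times a smooth character, cancelling the algebraic part).
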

\begin{proof}
This is \cite[Proposition 6.13]{Paskunas-JL}. For completeness, we sketch the proof. Since $D^{\times,1}$ is compact, $\Pi^{\onealg}$ is a direct sum of finite dimensional irreducible locally algebraic representations by \cite[Corollary 4.2.9]{emerton2004locally}. Suppose $V'$ is an irreducible smooth representation of $D^{\times,1}$ and that $W=\Sym^b E^2\otimes V'$ is a sub-$D^{\times,1}$-representation of $\Pi$. Since $\Pi$ has a locally algebraic central character, say $\z=\chi\e^c$ with $\chi$ smooth, $W$ is also $\Q_p^{\times}$-invariant. We extend $V'$ to a smooth representation $V$ of $D^{\times,1}\Q_p^{\times}$ by letting $\Q_p^{\times}$ act via the character $\chi|\cdot|^{b+2a}(\e/\eta^2)^i$ where $c-b=2a+i$ with $a\in\Z$, $i\in\{0,1\}$. It's easy to check that as a $D^{\times,1}\Q_p^{\times}$-representation, we have
\[
W\cong \Sym^b E^2\otimes\Nrd^a\otimes V\otimes\eta^i\circ\Nrd.
\]
Then we have an isomorphism
\[ \Ind^{D^{\times}}_{D^{\times,1}\Q_p^{\times}}(\Sym^b E^2\otimes\Nrd^a\otimes V\otimes\eta^i\circ\Nrd)
\cong
\Sym^b E^2\otimes\Nrd^a\otimes\eta^i\circ\Nrd\otimes\Ind^{D^{\times}}_{D^{\times,1}\Q_p^{\times}}V\] 
and a non-zero surjective morphism $\sum\Ind^{D^{\times}}_{D^{\times,1}\Q_p^{\times}}W\to\Pi^{\onealg}$. Since $D^{\times,1}\Q_p^{\times}$ has finite index in $D^{\times}$, the representation $\Ind^{D^{\times}}_{D^{\times,1}\Q_p^{\times}}W$ is semi-simple. So we have the first assertion.

By \cite[Proposition 6.13]{Paskunas-JL}, every finite dimensional representation of $D^{\times,1}$ is locally algebraic. So we have the last assertion.
\end{proof}
\begin{theorem}\label{algfin}The subspace of locally $D^{\times,1}$-algebraic vectors $\check{\mathcal{S}}^1(\Pi)^{\onealg}$ is finite dimensional in the following cases:
\hfill\begin{enumerate}
\item
$\Pi$ is a unitary admissible irreducible non-ordinary Banach space representation of $\GL_2(\mathbb{Q}_p)$;
\item $\Pi\cong(\Ind_B^G\d_2\e\otimes\d_1)_{\rm cont}$ with unitary characters $\d_1,\d_2\colon\Q^{\times}_p\to E^{\times}$ such that  $\d_1/\d_2\not\equiv1\pmod\varpi$. 
\end{enumerate}
\end{theorem}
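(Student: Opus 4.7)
The strategy is to decompose $\check{\mathcal{S}}^1(\Pi)^{\onealg}$ using Lemma \ref{findim}, and then to separately bound the number of isomorphism classes of irreducible constituents (via Corollary \ref{O_Dalg}) and the multiplicity of each (via the patched module $L_{\infty}$). In both cases $\Pi$ carries a locally algebraic central character, so Lemma \ref{findim} yields
\[
\check{\mathcal{S}}^1(\Pi)^{\onealg} \cong \bigoplus_{\alpha} W_{\alpha}^{\oplus m_{\alpha}},\qquad W_{\alpha} = \Sym^{b_{\alpha}} E^2 \otimes \Nrd^{a_{\alpha}} \otimes V_{\alpha} \otimes \eta^{i_{\alpha}}\circ \Nrd,
\]
with $V_{\alpha}$ smooth irreducible (and thus finite-dimensional, as every smooth irreducible $D^{\times}$-representation is), and $(a_{\alpha}, i_{\alpha})$ determined by $b_{\alpha}$ together with the central character of $\Pi$.

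For the first half, suppose $m_{\alpha} \neq 0$, so that $\Hom_{\cO_D^{\times}}(W_{\alpha}, \check{\mathcal{S}}^1(\Pi)) \neq 0$. Since $\eta$ satisfies the hypotheses of Corollary \ref{anycharacter} ($\eta \equiv 1 \pmod \varpi$ with $\eta^2$ locally algebraic), I would twist by $\eta^{-i_{\alpha}} \circ \det$ to replace $W_{\alpha}$ by the honest locally algebraic type $\sigma'_{\alpha} := \Sym^{b_{\alpha}} E^2 \otimes \Nrd^{a_{\alpha}} \otimes V_{\alpha}$ and $\Pi$ by $\Pi \otimes \eta^{-i_{\alpha}} \circ \det$; the patching machinery is preserved under this operation. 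Corollary \ref{O_Dalg}(a)--(c) then forces $\rho \otimes \eta^{-i_{\alpha}}$ (with $\rho$ the Galois representation attached to $\Pi$ as in the various cases of the corollary) to be potentially semi-stable with Hodge--Tate weights $(1-a_{\alpha}, -a_{\alpha}-b_{\alpha})$ and inertial type determined by $V_{\alpha}$ via local Jacquet--Langlands. For a fixed $\rho$, only finitely many triples $(a_{\alpha}, b_{\alpha}, i_{\alpha})$ with $b_{\alpha}\ge 0$ produce integer Hodge--Tate weights matching a twist $\rho \otimes \eta^{-i}$; for each such triple $V_{\alpha}$ is then uniquely pinned down. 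Case (d) of Corollary \ref{O_Dalg} is excluded by the hypothesis $\d_1/\d_2 \not\equiv 1 \pmod \varpi$.

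For the second half, I would take $\overline{\rho}$ the reduction mod $\varpi$ of a lattice in $\rho$, perform the patching of \S\ref{global} starting from $\overline{\rho}$, and choose $x\colon R_{\infty}\to \overline{\Q}_p$ lying above the point $y\colon R^{\Box,\psi\e^{-1}}_{\p} \to E$ attached to $\rho$ (possible by faithful flatness). Proposition \ref{cocleofPi_x} shows that $\Pi$ is a direct summand of $\Pi_{M,x}$ in case (1) and a subquotient in case (2); combining Proposition \ref{deltafunctor} with Theorem \ref{S(M)=L}, the space $\Hom_{\cO_D^{\times}}(W_{\alpha}, \check{\mathcal{S}}^1(\Pi))$ is finite-dimensional as soon as $\Hom_{\cO_D^{\times}}(W_{\alpha}, \Pi_{L,x})$ is. The latter is controlled, via \cite[Proposition 2.22]{Paskunas-BM}, by the specialization of $L_{\infty}(W_{\alpha}^0)$ at $x$; Proposition \ref{L_infty twist} (applied after the $\eta^{i_{\alpha}}$-twist supplied by Corollary \ref{anycharacter}) identifies this specialization with a finitely generated module over the local deformation ring $R^{\Box,\psi\e^{-1}}_{\p}(\sigma'_{\alpha}\otimes \eta^{i_{\alpha}}\circ \Nrd)$, whose generic fiber at $x$ is finite-dimensional over $E$. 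The hard part is the careful bookkeeping of the half-integer twist when $i_{\alpha} = 1$: one must verify that $\Pi \otimes \eta^{-1}\circ \det$ still lies within the reach of the patching framework (this is exactly the content of Corollary \ref{anycharacter}) and that the Hodge--Tate--Sen weights of $\rho \otimes \eta^{-1}$ genuinely restrict $(a_{\alpha}, b_{\alpha})$ to a finite set for each fixed $\rho$; once this is settled, both finiteness assertions follow cleanly from the tools already developed.
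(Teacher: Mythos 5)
Your decomposition into two finiteness claims (finitely many isomorphism classes of irreducible locally algebraic constituents, and finite multiplicity of each) is the right framework, and your treatment of the first claim via Lemma~\ref{findim}, the $\eta^{-i}$-twist from Corollary~\ref{anycharacter}, and Corollary~\ref{O_Dalg} is essentially the paper's argument. One small imprecision: once $(a,b)$ and the irreducible $\cO_D^\times$-subspace $\sigma'_{\rm sm}$ are pinned down, $V$ is not quite unique --- it is a quotient of $\Ind_{\Q_p^\times \cO_D^\times}^{D^\times}\sigma'_{\rm sm}$, and since $\Q_p^\times\cO_D^\times$ has index two in $D^\times$, this leaves at most two choices. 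This does not affect the conclusion.

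The genuine divergence is in your second claim. You re-derive the multiplicity bound by threading through the full patched-module package ($\Pi$ as a subquotient of $\Pi_{M,x}$, Theorem~\ref{S(M)=L}, specialization of $L_\infty(W_\alpha^0)$, finiteness over the local deformation ring), which also requires you to re-navigate the $\delta$-functor corrections that Corollary~\ref{O_Dalg} already handles. The paper instead observes that this step is free: $\check{\mathcal{S}}^1(\Pi)$ is an \emph{admissible} unitary Banach representation of $D^\times$ (this is built into the definition of $\check{\mathcal{S}}^i$ on $\Banadm$, see Proposition~\ref{deltafunctor} and \cite[Lemma 3.4]{Paskunas-JL}), and for an admissible representation the isotypic component of any fixed finite-dimensional $W$ is automatically finite-dimensional, since $\Hom_{D^\times}(W,\check{\mathcal{S}}^1(\Pi))\subset\Hom_{\cO_D^\times}(W,\check{\mathcal{S}}^1(\Pi))$ and the latter is finite by admissibility. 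So once one knows there are only finitely many isomorphism classes appearing, finite-dimensionality of the whole locally algebraic subspace follows immediately. Your route is not wrong, but it imports machinery whose output (admissibility of $\Pi_{L,x}$, etc.) is no stronger than the admissibility of $\check{\mathcal{S}}^1(\Pi)$ itself, which is available from the outset; recognizing this would let you delete the entire second half of your argument.
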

\begin{proof}
Since twisting by a character does not change the $D^{\times,1}$-action, we can assume the central character of $\check{\mathcal{S}}^1(\Pi)$ is locally algebraic. By Lemma \ref{findim} and the admissibility of $\check{\mathcal{S}}^1(\Pi)$, it's enough to show that the number of isomorphism classes of finite-dimensional irreducible $D^{\times}$-subrepresentations of $\check{\mathcal{S}}^1(\Pi)$ is finite. Suppose $\Sym^b E^2\otimes \det^a\otimes V\otimes\eta^{i}\circ\Nrd$ is one of such subspaces. Then $\Sym^b E^2\otimes \Nrd^a\otimes V$ is a sub-representation of $\check{\mathcal{S}}^1(\Pi\otimes\eta^{-i}\circ\det)$.  Let $\sigma'_{\rm sm}$ be an irreducible $\cO_{D}^{\times}$-invariant subspace of $V$. We have 
\[\Hom_{\cO^{\times}_{D}}(\Sym^b E^2\otimes\Nrd^a\otimes \sigma'_{\rm sm},\check{\cS}^1(\Pi\otimes\eta^{-i}\circ\det))\neq0.\]
By Corollary \ref{O_Dalg}, $b$, $a$ and $\sigma'_{\rm sm}$ (up to conjugation by the uniformizer of $D$) are determined by $\Pi$.
Since $V$ is a quotient of $\Ind_{\Q_p^{\times}\cO_{D}^{\times}}^{D^{\times}}\sigma'_{\rm sm}$, there are at most two such $V$'s containing $\sigma'_{\rm sm}$. This completes the proof of the theorem. \end{proof}

\section{Local arguments}\label{local}
\subsection{Multiplicity and the category $\cC$}
Let $\varpi_D$ be a uniformizer of $D$ such that $\varpi^2_D=p$. Let $U^1_D:=1+\varpi_D\cO_D$ and let $Z_D$ be the center of $D^{\times}$. We write $Z^1_D$ for the subgroup $Z_D\cap U^1_D$. Let $\fm_D$ be the maximal ideal of the Iwasawa algebra $\L:=\F[\![U^1_D/Z^1_D]\!]$. The graded ring
\[
\gr_{\fm_D}(\L):=\bigoplus_{n\ge0}\fm^n_D/\fm^{n+1}_D
\]
of $\L$ is isomorphic to the universal enveloping algebra of the Lie algebra $\F y\oplus\F z\oplus\F h$ with the relations
\[
[y, z]=h,\:[y, h]=[z,h]=0
\] 
and $\deg y=\deg z=1$, where $y=\overline{Y}$, $z=\overline{Z}$ and $Y, Z\in \L$ are defined in \cite[Definition 2.16]{HW-JL2}. Let $J$ be the two-sided ideal generated by $yz$ and $zy$. Then we have $\gr_{\fm_D}(\L)/J\cong\F[y,z]/(yz)$. The ring $A=\F[y,z]/(yz)$ has two minimal ideals $\p_0=(y)$ and $\p_1=(z)$. Clearly $A_{\p_0}\cong\F(z)$ and $A_{\p_1}\cong\F(y)$. If $N$ is a finitely generated graded $\gr_{\fm_D}(\L)$-module annihilated by $J^n$ for some $n \ge 1$ and $\q$ is a minimal graded prime ideal of $\gr_{\fm_D}(\L)/J$, we define the multiplicity of $N$ at $\q$ to be
\[m_{\q}(N) =\sum_{i=0}^n l_{\q}(J^iN/J^{i+1}N),\]
where $l_{\q}(\cdot)$ is the length of $(\cdot)_{\q}$ over the ring $(\gr_{\fm_D}(\L)/J)_{\q}$.

Denote by $\cC$ the category of admissible smooth $\F$-representations $\pi$ of $D^{\times}$ with a central character, such that for some (equivalently any) good filtration (see \cite[\S I.5]{LiO} for the definition) $F$ on $\pi^{\vee}$ the graded module $\gr_F (\pi^{\vee})$ is annihilated by a finite power of $J$. Here we view $\pi^{\vee}$ as a (finitely generated) $\F[\![U^1_D/Z^1_D]\!]$-module and the filtration $F$ on $\pi^{\vee}$ is compatible with the $\fm_{D}$-adic filtration on $\F[\![U^1_D/Z^1_D]\!]$. It is clear that $\cC$ is an abelian category and is stable under subquotients and extensions. For $\pi\in\cC$, we define the multiplicity of $\pi$ as 
\[
\mu(\pi):=m_{\p_0}(\gr_{\fm_D}(\pi^{\vee}))+m_{\p_1}(\gr_{\fm_D}(\pi^{\vee})).
\]
By \cite[Lemma 3.1.4.3 and Lemma 3.3.4.4]{BHHMS2}, $\mu$ is additive on $\cC$. Clearly if $\pi$ is finite dimensional, then $\pi\in\cC$ and $\m(\pi)=0$.  Conversely if $\pi\in\cC$ and $\m(\pi)=0$, then each $J^i\gr_{\fm_D}(\pi^{\vee})/J^{i+1}\gr_{\fm_D}(\pi^{\vee})$ is finite dimensional. This implies that $\pi$ is also finite dimensional.

Let $\Pi$ be an admissible unitary Banach space representation of $D^{\times}$ over $E$ with a central character $\z$. We say $\Pi\in\widehat{\cC}$ if $\Theta/\varpi\Theta\in\cC$ for some open bounded $D^{\times}$-invariant lattice $\Theta$ in $\Pi$. We define $\mu(\Pi):=\mu(\Theta/\varpi\Theta)$. By \cite[Lemma 3.11]{HW-JL2}, these definitions do not depend on the choice of $\Theta$. Clearly $\widehat{\cC}$ is a subcategory of $\BanDadm$ stable under subquotients and extensions.
\begin{lemma}\label{SinC}
Let $\Pi$ be a unitary admissible Banach space representation of $G$. Let $\Theta$ be a $G$-invariant $\cO$-lattice in $\Pi$. Suppose that
\begin{equation}\label{dimless1}\dim_{\F}\cS^2((\Theta/\varpi)^{\rm ss})<\infty,\; \dim_{\F}\cS^0((\Theta/\varpi)^{\rm ss})<\infty.\end{equation} 
If $\cS^1(\Theta/\varpi\Theta)\in\cC$, then $\Shat^1(\Pi)\in\widehat{\cC}$ and $\mu(\Shat^1(\Pi))=\mu(\cS^1(\Theta/\varpi))$.
\end{lemma}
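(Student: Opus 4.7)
The plan is to take $\Xi:=(\varprojlim_n\cS^1(\Theta/\varpi^n))_{\rm tf}$, which by construction is an open bounded $D^\times$-invariant $\cO$-lattice in $\Shat^1(\Pi)$, and to show that $\Xi/\varpi\Xi$ is related to $\cS^1(\Theta/\varpi)$ through a four-term exact sequence whose flanking terms are finite dimensional over $\F$. Since $\cC$ is closed under subquotients and extensions and $\mu$ is additive and vanishes on finite-dimensional representations, such a comparison will simultaneously force $\Xi/\varpi\Xi\in\cC$ and $\mu(\Xi/\varpi\Xi)=\mu(\cS^1(\Theta/\varpi))$, and hence $\Shat^1(\Pi)\in\widehat\cC$ with the claimed multiplicity.

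The comparison is most cleanly set up through the $\delta$-functor $\Shat^\bullet$ on $\CO$ applied to the Bockstein sequence $0\to\Theta^d\xrightarrow{\varpi}\Theta^d\to\Theta^d/\varpi\to 0$. A short computation using the $\varpi$-torsion sequence $0\to\Theta/\varpi\to\Theta\otimes E/\cO\xrightarrow{\varpi}\Theta\otimes E/\cO\to 0$ identifies $\Theta^d/\varpi\cong(\Theta/\varpi)^\vee$ in $\CO$, so that for each $i$ the long exact sequence splits into short exact sequences
\[
0\to\Shat^i(\Theta^d)/\varpi\to\cS^i(\Theta/\varpi)^\vee\to\Shat^{i-1}(\Theta^d)[\varpi]\to 0.
\]
The hypotheses, combined with left-exactness of $\cS^0$ and right-exactness of $\cS^2$, give $\dim_\F\cS^0(\Theta/\varpi)<\infty$ and $\dim_\F\cS^2(\Theta/\varpi)<\infty$. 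The $i=0$ instance then forces $\Shat^0(\Theta^d)/\varpi\cong\cS^0(\Theta/\varpi)^\vee$ to be finite-dimensional, so $\Shat^0(\Theta^d)$ is a finitely generated $\cO$-module by topological Nakayama and its $\varpi$-torsion is finite; the $i=3$ instance (using $\cS^3=0$) forces $\Shat^2(\Theta^d)[\varpi]=0$, and then the $i=2$ instance with Nakayama makes $\Shat^2(\Theta^d)$ a finitely generated torsion-free $\cO$-module, so $\Shat^1(\Theta^d)[\varpi]$ is finite dimensional. Substituting these finite-dimensional pieces into the $i=1,2$ instances and Pontryagin-dualizing to pass from $\Shat^1(\Theta^d)/\varpi$ to $T/\varpi T$, where $T:=\varprojlim_n\cS^1(\Theta/\varpi^n)\cong\Shat^1(\Theta^d)^d$, produces the desired four-term exact sequence
\[
0\to A\to T/\varpi T\to\cS^1(\Theta/\varpi)\to B\to 0
\]
with $A$ a subquotient of $\cS^0(\Theta/\varpi)$ and $B$ a subquotient of $\cS^2(\Theta/\varpi)$, both finite dimensional.

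The final step is to pass from $T/\varpi T$ to $\Xi/\varpi\Xi=(T_{\rm tf})/\varpi(T_{\rm tf})$, which alters the picture only by the $\cO$-torsion contribution of $T$, itself finite dimensional by the Pontryagin-dual analogue of the Nakayama control applied to $\Shat^1(\Theta^d)[\varpi^\infty]$ (a quotient of the finite dimensional $\cS^2(\Theta/\varpi)^\vee$). The four-term sequence therefore persists with $\Xi/\varpi\Xi$ in place of $T/\varpi T$; closure of $\cC$ under subquotients and extensions gives $\Xi/\varpi\Xi\in\cC$, and additivity of $\mu$ combined with $\mu(A)=\mu(B)=0$ yields $\mu(\Xi/\varpi\Xi)=\mu(\cS^1(\Theta/\varpi))$. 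The main obstacle will be the bookkeeping between the two dualities in play---Pontryagin dual on $\CO$ versus $\cO$-dual producing the Banach lattice---so as to transfer the finite-dimensional discrepancies between $\Shat^1(\Theta^d)/\varpi$ and $\cS^1(\Theta/\varpi)^\vee$ into a $D^\times$-equivariant comparison between $\Xi/\varpi\Xi$ and $\cS^1(\Theta/\varpi)$ that preserves both membership in $\cC$ and the value of $\mu$ modulo finite-dimensional error.
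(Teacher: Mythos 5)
Your proposal follows essentially the same route as the paper: both start from the Bockstein short exact sequences
\[
0\to\Shat^i(\Theta^d)/\varpi\to\cS^i(\Theta/\varpi)^\vee\to\Shat^{i-1}(\Theta^d)[\varpi]\to 0
\]
obtained by applying the $\delta$-functor $\{\Shat^i\}$ to $0\to\Theta^d\xrightarrow{\varpi}\Theta^d\to\Theta^d/\varpi\to0$, use the hypothesis \eqref{dimless1} to control the finite-dimensional error terms at $i=0,2$, and conclude via additivity of $\mu$ together with the closure of $\cC$ under subquotients and extensions. Two points in your write-up should be repaired. First, your parenthetical that $\Shat^1(\Theta^d)[\varpi^\infty]$ is ``a quotient of the finite dimensional $\cS^2(\Theta/\varpi)^\vee$'' is not correct as stated: only $\Shat^1(\Theta^d)[\varpi]$ is such a quotient, via the $i=2$ Bockstein sequence. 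The paper instead uses that $\Shat^1(\Theta^d)$ is finitely generated over the noetherian ring $\cO[\![\cO_D^\times]\!]$, so its torsion is killed by some $\varpi^m$, and then applies the Bockstein sequence with $\varpi^m$ in place of $\varpi$, identifying $\Shat^1(\Theta^d)[\varpi^m]$ with a quotient of $\Shat^2(\Theta^d/\varpi^m)$, itself of finite $\cO$-length by d\'evissage from the hypothesis. (Equivalently, once $\Shat^1(\Theta^d)[\varpi]$ is finite-dimensional, the filtration of the bounded torsion by $\varpi$-power kernels has all graded pieces embedding into the $\varpi$-torsion, giving finite length.) Second, the ``four-term exact sequence'' relating $\Xi/\varpi\Xi$ to $\cS^1(\Theta/\varpi)$ does not literally exist; the actual comparison is a zigzag through $(\Shat^1(\Theta^d)/\varpi)^\vee$, onto which $\cS^1(\Theta/\varpi)$ surjects with finite-dimensional kernel and into which $(\Shat^1(\Theta^d)_{\rm tf}/\varpi)^\vee$ injects with finite-dimensional cokernel. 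This zigzag is enough for the $\mu$-additivity and $\cC$-closure argument, so the conclusion stands once these details are fixed.
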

\begin{proof}
We write $\Shat^i(\Theta^d)_{\rm tor}$ for the $\cO$-torsion part of $\Shat^i(\Theta^d)$ and write $\Shat^i(\Theta^d)_{\rm tf}$ for the maximal $\cO$-torsion free quotient of $\Shat^i(\Theta^d)$. Since $\Shat^i(\Theta^d)$ is finitely generated over the noetherian profinite ring $\cO[\![\cO^{\times}_{D}]\!]$ by the proof of \cite[Lemma 3.4]{Paskunas-JL}, $\Shat^i(\Theta^d)_{\rm tor}$ is also finitely generated over $\cO[\![\cO^{\times}_{D}]\!]$. Therefore there exists some $m\ge1$ such that $\Shat^i(\Theta^d)_{\rm tor}=\Shat^i(\Theta^d)[\varpi^m]$ for $0\le i\le 2$. We have an exact sequence 
\[
0\to\Shat^i(\Theta^d)_{\rm tor}\to\Shat^i(\Theta^d)/\varpi^m\to\Shat^i(\Theta^d)_{\rm tf}/\varpi^m\to0.
\]
Since $\Shat^0$ is covariant right exact, $\Shat^0(\Theta^d)/\varpi^m\cong\Shat^0(\Theta^d/\varpi^m)$. An easy induction argument shows that $l_{\cO}(\Shat^0(\Theta^d/\varpi^m))<\infty$. Then it follows from $\Shat^0(\Theta^d)[\varpi]\subset\Shat^0(\Theta^d)_{\rm tor}\subset\Shat^0(\Theta^d)/\varpi^m$ that $\Shat^0(\Theta^d)[\varpi]$ is finite dimensional.

 By \cite[(14)]{Paskunas-JL}, there is an exact sequence
\[0\to\Shat^1(\Theta^d)/\varpi\to\Shat^1(\Theta^d/\varpi )\to\Shat^0(\Theta^d)[\varpi]\to0.\] 
Hence $(\Shat^1(\Theta^d)/\varpi)^{\vee}$ is a quotient of $\cS^1(\Theta/\varpi )$ with a finite dimensional kernel, which implies $(\Shat^1(\Theta^d)/\varpi)^{\vee}\in\cC$ and $\m((\Shat^1(\Theta^d)/\varpi)^{\vee})=\mu(\cS^1(\Theta/\varpi))$.

By \cite[(14)]{Paskunas-JL} with $\varpi$ replaced by $\varpi^m$, we have an exact sequence
\[0\to\Shat^2(\Theta^d)/\varpi^m\to\Shat^2(\Theta^d/\varpi^m )\to\Shat^1(\Theta^d)[\varpi^m]\to0.\] 
Then we have 
\[
l_{\cO}(\Shat^1(\Theta^d)_{\rm tor}/\varpi)\le l_{\cO}(\Shat^1(\Theta^d)_{\rm tor})=l_{\cO}(\Shat^1(\Theta^d)[\varpi^m])\le l_{\cO}(\Shat^2(\Theta^d/\varpi^m ))< \infty.
\]
It then follows from the exact sequence
\[
0\to\Shat^1(\Theta^d)_{\rm tor}/\varpi\to\Shat^1(\Theta^d)/\varpi\to\Shat^1(\Theta^d)_{\rm tf}/\varpi\to0
\]
that $(\Shat^1(\Theta^d)_{\rm tf}/\varpi)^{\vee}$ is a subrepresentation of $(\Shat^1(\Theta^d)/\varpi)^{\vee}$ with finite codimension. Hence $(\Shat^1(\Theta^d)_{\rm tf}/\varpi)^{\vee}\in\cC$ and $\m((\Shat^1(\Theta^d)_{\rm tf}/\varpi)^{\vee})=\mu(\cS^1(\Theta/\varpi))$. Since $\Shat^1(\Theta^d)^d/\varpi \cong(\Shat^1(\Theta^d)_{\rm tf}/\varpi )^{\vee}$, we have $\Shat^1(\Pi)\in\widehat{\cC}$ and $\mu(\Shat^1(\Pi))=\mu(\cS^1(\Theta/\varpi))$. \end{proof}
\subsection{A finiteness criterion}
One of the main results of \cite{HW-JL2} is the nonexistence of $\Pi\in\widehat{\cC}$ with multiplicity $2$. 
\begin{theorem}\label{mu>2}
If $\pi\in\cC$, then $\mu(\pi)\in2\Z_{\ge0}$. If moreover $\pi$ is infinite dimensional, then $\mu(\pi)\ge4$.
\end{theorem}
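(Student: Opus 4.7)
The plan is to handle the two assertions separately. Parity follows from a symmetry coming from conjugation by a uniformizer of $D$; the lower bound is a structural dichotomy forcing $\mu=2$ to imply finite-dimensionality.

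For the parity statement, I would fix a uniformizer $\varpi_D\in D^{\times}$ with $\varpi_D^2=p$. Conjugation by $\varpi_D$ normalizes $U^1_D$ and $Z^1_D$ and induces a ring automorphism $\s$ of $\Lambda=\F[\![U^1_D/Z^1_D]\!]$ whose linearization is the nontrivial Frobenius of $\F_{p^2}/\F_p$; concretely $\s$ exchanges the generators $Y$ and $Z$ modulo $\fm_D^2$, up to a nonzero scalar. Hence $\s$ swaps $y$ and $z$ in $\gr_{\fm_D}(\Lambda)/J\cong \F[y,z]/(yz)$ and interchanges the minimal primes $\p_0=(y)$ and $\p_1=(z)$. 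Since $\pi$ is a $D^{\times}$-representation, $\varpi_D$ acts on $\pi^{\vee}$ compatibly with $\s$, and one may choose a good filtration stable under this action. The induced $\s$-semilinear automorphism of $\gr_{\fm_D}(\pi^{\vee})$ identifies the $\p_0$- and $\p_1$-localizations, so $m_{\p_0}(\gr_{\fm_D}(\pi^{\vee}))=m_{\p_1}(\gr_{\fm_D}(\pi^{\vee}))$ and $\mu(\pi)$ is even.

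For the lower bound, first observe that every smooth irreducible $\F$-representation of $D^{\times}$ is finite dimensional of dimension at most $2$, since $U^1_D$ is pro-$p$ normal in the open finite-index subgroup $\cO_D^{\times}\cdot\Q_p^{\times}\subset D^{\times}$. Finite dimensional objects of $\cC$ have $\mu=0$, so by additivity of $\mu$ it suffices to show that no infinite dimensional $\pi\in\cC$ has $\mu(\pi)=2$. Arguing by contradiction and using additivity once more, I would reduce to the case in which $\pi$ admits no nonzero finite dimensional subrepresentation and no nonzero finite dimensional quotient, yet still $\mu(\pi)=2$. Under this reduction, $\gr_{\fm_D}(\pi^{\vee})$ is a finitely generated $J^n$-torsion graded module over $\gr_{\fm_D}(\Lambda)$ with $m_{\p_0}=m_{\p_1}=1$, and its successive layers $J^i\gr_{\fm_D}(\pi^{\vee})/J^{i+1}\gr_{\fm_D}(\pi^{\vee})$ are finitely generated graded modules over $\F[y,z]/(yz)$ of total multiplicity $2$, which severely constrains their shape. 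An inductive analysis of these layers, combined with the Auslander regularity of $\Lambda$ to lift the structural information back to $\pi^{\vee}$, should produce a nonzero finite dimensional subrepresentation or quotient of $\pi$, contradicting the reduction.

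The hardest part will be exactly this lifting step: translating the rigid graded structure of $\gr_{\fm_D}(\pi^{\vee})$ into a finite dimensional subrepresentation or quotient of $\pi$ itself. Executing it cleanly requires the Auslander--Gorenstein property of the Iwasawa algebra $\Lambda$ together with a careful induction on the $J$-adic filtration length $n$, and is the main technical input needed beyond the parity argument.
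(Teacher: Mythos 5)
The paper's proof of Theorem~\ref{mu>2} is a single citation to Lemma~3.6 and Theorem~3.10 of the reference cited as HW-JL2, so there is no in-text argument to compare against line by line. Judged on its own terms, your proposal splits naturally into the two halves of the statement, and they fare quite differently.

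Your parity argument is plausibly the right mechanism: conjugation by $\varpi_D$ normalizes $U_D^1$ and $Z_D^1$, induces an $\F$-algebra automorphism $\s$ of $\Lambda=\F[\![U_D^1/Z_D^1]\!]$, and the $\varpi_D$-action on $\pi^\vee$ is $\s$-semilinear, so any $\s$-stable good filtration gives an isomorphism of graded modules intertwining the twist by $\s$. Granting that $\s$ indeed interchanges the two graded minimal primes $\p_0=(y)$ and $\p_1=(z)$, one gets $m_{\p_0}(\gr\pi^\vee)=m_{\p_1}(\gr\pi^\vee)$ and hence evenness. What you have not justified is precisely the assertion in that parenthesis: you state, but do not verify, that the linearization of $\s$ swaps $Y$ and $Z$ modulo $\fm_D^2$ up to scalar. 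That requires unwinding the actual definition of $Y,Z$ in the cited work and the precise Frobenius twist that $\ad(\varpi_D)$ induces on $\varpi_D\cO_D/p\cO_D$. It is not a formality: if $\s$ were inner modulo $\fm_D^2$ or acted by scalars, it would fix each of $\p_0,\p_1$ and the argument would collapse. So the parity half is a reasonable sketch with one unverified computational input.

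The lower bound is where the proposal has a genuine gap, and you say so yourself: ``An inductive analysis of these layers \ldots should produce a nonzero finite dimensional subrepresentation or quotient of $\pi$'' and ``Executing it cleanly \ldots is the main technical input needed beyond the parity argument.'' This is exactly the content of the theorem, and it is left undone. The difficulty is not cosmetic. Knowing that each layer $J^i\gr(\pi^\vee)/J^{i+1}\gr(\pi^\vee)$ is a finitely generated graded $\F[y,z]/(yz)$-module of total multiplicity $\le 2$ constrains its Hilbert polynomial, not its module structure, and the passage from $\gr(\pi^\vee)$ back to $\pi^\vee$ is lossy: a good filtration can have arbitrarily long torsion-free pieces that disappear in the associated graded, so ``thin'' graded layers do not by themselves exhibit a finite-dimensional sub or quotient of $\pi$. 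The Auslander--Gorenstein machinery you invoke (dimension filtration, $\mathrm{E}^i$-duality, purity) is the right toolbox, but making it yield a finite-dimensional constituent from $m_{\p_0}=m_{\p_1}=1$ is the hard theorem, not an application of general principles. As it stands, the second half of your proposal is a plan, not a proof.
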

\begin{proof}
This is \cite[Lemma 3.6 and Theorem 3.10]{HW-JL2}.
\end{proof}
We follow the argument of the proof of \cite[Theorem 3.13]{HW-JL2} to deduce the following theorem. However, in {\em loc. cit.} the space $\Pi^{\rm lalg}$ should be replaced by $\Pi^{\onealg}$, since finite dimensional $D^{\times}$-representations are not necessarily locally algebraic as $D^{\times}$-representations.
\begin{theorem}\label{fincri}
Suppose $\Pi\in\widehat{\cC}$ and that $\mu(\Pi)\le4$. If $\Pi^{\onealg}$ is finite dimensional, then $\Pi$ is topologically of finite length.
\end{theorem}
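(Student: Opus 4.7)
I will follow the argument of \cite[Theorem 3.13]{HW-JL2}, with the substitution of $\Pi^{\onealg}$ for the space of $D^{\times}$-locally algebraic vectors used there. As the remark preceding the statement explains, this substitution is necessary because Lemma~\ref{findim} shows that a finite-dimensional closed $D^{\times}$-subrepresentation of $\Pi$ is always locally algebraic for the $D^{\times,1}$-action, while it need not be locally algebraic for the full group $D^{\times}$.

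The first step is a reduction to the case $\Pi^{\onealg}=0$. Let $\Pi_0\subset\Pi$ be the sum of all finite-dimensional closed $D^{\times}$-subrepresentations. By Lemma~\ref{findim} each summand lies in $\Pi^{\onealg}$, so $\Pi_0\subset\Pi^{\onealg}$ is itself finite-dimensional and closed. Any nonzero finite-dimensional closed subrepresentation of $\Pi/\Pi_0$ would lift to a finite-dimensional closed subrepresentation of $\Pi$ strictly containing $\Pi_0$, contradicting maximality; hence $\Pi/\Pi_0$ has none, and by Lemma~\ref{findim} applied once more, $(\Pi/\Pi_0)^{\onealg}=0$. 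Additivity of $\mu$ on $\widehat{\cC}$, combined with $\mu(\Pi_0)=0$, gives $\mu(\Pi/\Pi_0)=\mu(\Pi)\le 4$. Since $\Pi_0$ is of finite length, it suffices to prove that $\Pi/\Pi_0$ is of finite length, so we may replace $\Pi$ by $\Pi/\Pi_0$ and assume $\Pi^{\onealg}=0$.

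Under this reduction, suppose $\Pi\ne 0$. Since $\Pi$ has no finite-dimensional closed subrepresentation, $\Pi$ is infinite-dimensional, so for any open bounded $D^{\times}$-invariant $\cO$-lattice $\Theta\subset\Pi$ the reduction $\Theta/\varpi$ is infinite-dimensional in $\cC$; Theorem~\ref{mu>2} then gives $\mu(\Pi)\ge 4$, forcing $\mu(\Pi)=4$. If $W\subsetneq\Pi$ is any nonzero proper closed subrepresentation, then $W$ also has no finite-dimensional closed subrepresentation (being a subrepresentation of $\Pi$), hence is infinite-dimensional with $\mu(W)\ge 4$; additivity of $\mu$ forces $\mu(W)=4$ and $\mu(\Pi/W)=0$, so $\Pi/W$ is finite-dimensional by Theorem~\ref{mu>2}. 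In particular, every proper nonzero closed subrepresentation of $\Pi$ has finite codimension.

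The main obstacle is to use the hypothesis $\Pi^{\onealg}=0$ to rule out the existence of such a proper $W\subsetneq\Pi$ with $\Pi/W$ nonzero and finite-dimensional, and thereby conclude that $\Pi$ is topologically irreducible. The difficulty is that although every vector in $\Pi/W$ is automatically $D^{\times,1}$-locally algebraic (Lemma~\ref{findim}), a lift of such a vector to $\Pi$ need not be, so a naive lifting argument fails. The plan is to exploit the explicit description of the graded ring $\gr_{\fm_D}(\L)/J\cong\F[y,z]/(yz)$ acting on the associated graded of a good filtration of the dual module $\Pi^d$: the equality $\mu(W)=\mu(\Pi)=4$ distributes across the two minimal primes $\p_0=(y)$ and $\p_1=(z)$, and a careful analysis of the graded pieces (as in the proof of Theorem~3.13 of \cite{HW-JL2}) forces the extension $0\to W\to\Pi\to\Pi/W\to 0$ to produce a nonzero finite-dimensional closed $D^{\times}$-subrepresentation of $\Pi$, contradicting $\Pi^{\onealg}=0$. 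Consequently $\Pi$ is either zero or topologically irreducible, and together with $\Pi_0$ having finite length this yields the theorem.
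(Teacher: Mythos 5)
Your reduction to the case $\Pi^{\onealg}=0$ and the middle paragraph establishing that every proper nonzero closed subrepresentation of (the reduced) $\Pi$ has finite codimension are both fine, and they match the multiplicity bookkeeping the paper uses. The gap is in the final paragraph: you claim the remaining task is to rule out any proper nonzero closed $W\subsetneq\Pi$ and conclude that $\Pi$ is \emph{topologically irreducible}. That is both unnecessary and unattainable. The hypothesis $\Pi^{\onealg}=0$ only controls finite-dimensional \emph{sub}representations of $\Pi$; it says nothing about finite-dimensional \emph{quotients}. After your reduction $\Pi$ may perfectly well sit in an exact sequence $0\to W\to\Pi\to Q\to 0$ with $W$ infinite-dimensional and $Q$ finite-dimensional without $\Pi$ possessing any finite-dimensional closed subspace, so no ``careful analysis of the graded pieces'' can produce the contradiction you are hoping for. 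What the proof should aim for, and all that the theorem asserts, is finite \emph{length}, not irreducibility.

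The paper's argument bypasses this entirely: since $\Pi/\Pi^{\onealg}$ is admissible, it has a nonzero closed irreducible subrepresentation $\Pi'$. Because $(\Pi/\Pi^{\onealg})^{\onealg}=0$ (in your reduction) or equivalently because $\Pi^{\onealg}$ already absorbed all finite-dimensional subreps, Lemma~\ref{findim} forces $\Pi'$ to be infinite-dimensional, whence $\mu(\Pi')\ge 4$ by Theorem~\ref{mu>2}; since $\mu(\Pi/\Pi^{\onealg})\le 4$, additivity gives $\mu\bigl((\Pi/\Pi^{\onealg})/\Pi'\bigr)=0$, so the quotient of $\Pi'$ is finite-dimensional. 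Thus $\Pi/\Pi^{\onealg}$ has length at most $1$ plus a finite-dimensional piece, and $\Pi^{\onealg}$ is already finite-dimensional, so $\Pi$ has finite length. The step you are missing, therefore, is the appeal to admissibility to extract one irreducible closed subrepresentation; once you have that, your multiplicity computations finish the argument without any need to show that the proper subobject $W$ cannot exist.
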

\begin{proof}
It's enough to assume that $\Pi$ is infinite dimensional. Then $\Pi/\Pi^{\onealg}$ is infinite dimensional. Since $\Pi/\Pi^{\onealg}$ is admissible, there is a nonzero irreducible subrepresentation $\Pi'$ of $\Pi/\Pi^{\onealg}$. Since $\widehat{\cC}$ is stable under subquotient and $\mu$ is additive on $\widehat{\cC}$, we have $\Pi'\in\widehat{\cC}$ and $\mu(\Pi')\le\mu(\Pi)\le4$. By Lemma \ref{findim}, $\Pi'$ is  infinite dimensional. It follows from Theorem \ref{mu>2} that $\mu(\Pi')\ge4$. Hence $\mu(\Pi')=\mu(\Pi)=4$ and $\mu((\Pi/\Pi^{\onealg})/\Pi')=0$. So $\Pi'$ has finite codimension in $\Pi/\Pi^{\onealg}$.
\end{proof}

\section{Main result}\label{main}
\begin{lemma}\label{modpresult}
Let $\overline{\rho}\colon G_{\Q_p}\to\GL_2(\F)$ be a continuous representation. 
\hfill\begin{enumerate}
\item If $\overline{\rho}$ is absolutely irreducible, we assume $\overline{\rho}$ is generic in the sense of Definition \ref{generic}; 
\item If $\overline{\rho}\cong\big(\begin{smallmatrix}\chi_1& *\\0&\chi_2\end{smallmatrix}\big)$ is a nonsplit extension of $\chi_2$ by $\chi_1$, we assume $\chi_1\chi_2^{-1}|_{I_{\Q_p}}\neq1,\o$.
\end{enumerate}
Then $\cS^1(\pi(\overline{\rho}))\in\cC$ and $\mu(\cS^1(\pi(\overline{\rho})))=8$.
\end{lemma}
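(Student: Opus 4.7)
The proof splits according to the hypothesis. In Case (1), $\pi(\overline{\rho})$ is an irreducible generic supersingular representation of $G$. In Case (2), the hypothesis $\chi_1\chi_2^{-1}|_{I_{\Q_p}} \neq 1, \omega$ places us in case (2) of the construction of $\pi(\overline{\rho})$ recalled in \S\ref{modpcor}, so $\pi(\overline{\rho})$ sits in a nonsplit exact sequence
\[
0 \to \pi_1 \to \pi(\overline{\rho}) \to \pi_2 \to 0, \qquad \pi_1 := \Ind_B^G \chi_2\omega \otimes \chi_1,\; \pi_2 := \Ind_B^G \chi_1\omega \otimes \chi_2.
\]
The plan in Case (2) is to apply Scholze's cohomological $\delta$-functor $\{\cS^i\}_{i \ge 0}$ to this sequence. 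Under the hypothesis, both $\pi_i$ are irreducible principal series with distinct inducing characters, so by item (4) of the vanishing theorem in \S\ref{Scholze} we have $\cS^0(\pi_i) = \cS^2(\pi_i) = 0$. The long exact sequence therefore collapses to
\[
0 \to \cS^1(\pi_1) \to \cS^1(\pi(\overline{\rho})) \to \cS^1(\pi_2) \to 0,
\]
and by additivity of $\mu$ on $\cC$, Case (2) reduces to showing $\cS^1(\pi_i) \in \cC$ with $\mu(\cS^1(\pi_i)) = 4$ for each $i$.

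The lemma is then reduced to two local multiplicity computations: $\mu(\cS^1(\pi_{\rm PS})) = 4$ for each generic principal series arising in Case (2), and $\mu(\cS^1(\pi_{\rm ss})) = 8$ for the generic supersingular $\pi_{\rm ss} = \pi(r,0,\chi)$ with $2 \le r \le p-3$ arising in Case (1). In each case I would construct a good filtration on $\cS^1(\pi)^{\vee}$ as a finitely generated module over the Iwasawa algebra $\Lambda = \F[\![U_D^1/Z_D^1]\!]$, verify that the associated graded $\gr_{\fm_D}(\cS^1(\pi)^{\vee})$ is annihilated by a power of the two-sided ideal $J$ (settling membership in $\cC$), and then compute the length of its localization at each of the two minimal graded primes $\p_0 = (y), \p_1 = (z)$ of $\gr_{\fm_D}(\Lambda)/J \cong \F[y,z]/(yz)$. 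The source of such filtrations is the geometric description of Scholze's functor on the \'etale site of $\bP^1_{\C_p}$ combined with the mod-$p$ analysis of $\cS^1$ in terms of the $\cO_D^{\times}$-action developed in \cite{HW-JL1}.

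The main obstacle is the supersingular computation in Case (1), where the exact value $\mu(\cS^1(\pi_{\rm ss})) = 8$ demands both an upper bound, coming from a filtration whose associated graded is explicitly controlled, and a matching lower bound, obtained by exhibiting enough Jordan--H\"older constituents of $\gr_{\fm_D}(\cS^1(\pi_{\rm ss})^{\vee})$ at each minimal prime. The genericity hypothesis $2 \le r \le p-3$ is crucial here: it both supplies the vanishing $\cS^0(\pi_{\rm ss}) = \cS^2(\pi_{\rm ss}) = 0$ needed to manipulate exact sequences involving $\pi_{\rm ss}$, and it guarantees that the $\cO_D^{\times}$-constituents of $\cS^1(\pi_{\rm ss})$ are nondegenerate in the sense required for the multiplicity count from \cite{HW-JL1} to apply. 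The principal series computation should be comparatively routine once $\cS^1(\pi_{\rm PS})|_{\cO_D^{\times}}$ is written down explicitly, since the induction picture there lets one read off the associated graded directly.
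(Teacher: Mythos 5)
Your reduction in Case (2) is correct and clean: applying the cohomological $\delta$-functor $\{\cS^i\}$ to $0 \to \pi_1 \to \pi(\overline{\rho}) \to \pi_2 \to 0$ and invoking $\cS^0(\pi_i)=\cS^2(\pi_i)=0$ for the irreducible principal series does yield a short exact sequence on $\cS^1$, so additivity of $\mu$ reduces Case (2) to $\mu(\cS^1(\pi_i))=4$. In fact this reduction is essentially what appears in the paper --- but as Corollary~\ref{mu4}, \emph{deduced from} the lemma, not used to prove it. Running it in your direction therefore requires you to compute $\mu(\cS^1(\pi_{\rm PS}))$ and $\mu(\cS^1(\pi_{\rm ss}))$ by hand, and here your proposal stops: you describe a plan (good filtration on $\cS^1(\pi)^{\vee}$ over $\Lambda$, check $J$-annihilation, localize at $\p_0,\p_1$) but do not supply the filtration, the graded module, or the length computation. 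There is currently no known direct local computation of $\mu(\cS^1(\pi_{\rm ss}))$ or $\mu(\cS^1(\pi_{\rm PS}))$, so ``should be comparatively routine'' and ``explicitly controlled'' are hiding the entire content of the lemma.

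The paper's proof proceeds by a global argument that you do not mention, and whose entire purpose is to sidestep the direct computation you are attempting. Briefly: the paper compares $\cS^1(\pi(\overline{\rho}))^{\oplus d}$ (using $\pi^{B'}(\overline{\rho})\cong\pi(\overline{\rho})^{\oplus d}$ from \cite[Theorem~7.7]{HW-JL1}) with $\overline{\rho}\otimes\pi^B(\overline{\rho})$ via the inclusion $\cS^1(\pi^{B'}(\overline{\rho}))\subset\overline{\rho}\otimes\pi^B(\overline{\rho})$ of \cite[Proposition~7.6]{HW-JL1}, whose cokernel is finite-dimensional; then it invokes \cite[Theorem~6.14]{HW-JL1} to get $\pi^B(\overline{\rho})\in\cC$ with $\mu(\pi^B(\overline{\rho}))\le 4m$, where $m$ is a quaternionic Serre weight multiplicity. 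The genericity hypothesis enters not primarily for vanishing of $\cS^0,\cS^2$ (only $\cS^2(\pi_{\rm ss})=0$ actually needs it) but to ensure $\chi(\cO_D^\times\cap D^{\times,1})$ is nontrivial for each Serre weight $\chi$, which makes the weight multiplicity count clean and allows the identity $m=d$ via $\dim_{\F}\Hom_{\cO_D^\times}(\chi,\cS^1(\pi(\overline{\rho})))=2$ from \cite[Theorem~4.18]{HW-JL2} (resp.\ \cite[Proposition~8.20, Corollary~8.30]{HW-JL1}). Combined with the lower bound $\mu\ge 4$ for infinite-dimensional objects of $\cC$ (Theorem~\ref{mu>2}), this pins down $\mu(\pi^B(\overline{\rho}))=4$ and hence $\mu(\cS^1(\pi(\overline{\rho})))=8$ in both cases simultaneously. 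To repair your proposal you would either have to carry out the explicit graded-module analysis (a substantial new computation) or import this global comparison, at which point the case split becomes unnecessary.
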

\begin{proof}
We use the notation of  \cite{HW-JL1}. It follows from \cite[Theorem 7.7]{HW-JL1} that 
\begin{equation}\label{multd}
\pi^{B'}(\overline{\rho})\cong\pi(\overline{\rho})^d
\end{equation} for some $d\ge1$. And we have a $G_{\Q_p}\times D^{\times}$-equivariant inclusion
\begin{equation}\label{Ssubsetpi}
\cS^1(\pi^{B'}(\overline{\rho}))\subset \overline{\rho}\otimes\pi^{B}(\overline{\rho})
\end{equation}
by \cite[Proposition 7.6]{HW-JL1}. The cokernel of this inclusion is finite dimensional and $\cO^{\times}_{D}\cap D^{\times,1}$ acts trivially on the cokernel. Let $W_D(\overline{\rho}\otimes\o)$ be the quaternionic Serre weights for $\overline{\rho}\otimes\o$ which is denoted by $W^?(\overline{\rho}\otimes\o)$ in \cite[Definition 3.4]{MR2822861}. 
According to \cite[Theorem 6.14]{HW-JL1}, we have $\pi^B(\overline{\rho})\in\cC$ and 
\begin{equation}\label{mu4m}\mu(\pi^B(\overline{\rho}))\le4m\end{equation}
where $m=\dim_{\F}\Hom_{\cO^{\times}_D}(\chi,\pi^{B}(\overline{\rho}))$ for each $\chi\in W_D(\overline{\rho}\otimes\o)$. By our assumption on $\overline{\rho}$ and description of $W_D(\overline{\rho}\otimes\o)$ in  \cite[Proposition 6.1]{HW-JL1}), for each $\chi\in W_D(\overline{\rho}\otimes\o)$, the group $\chi(\cO^{\times}_{D}\cap D^{\times,1})$ is non-trivial. So we have
\[
\begin{aligned}
2m&=\dim_{\F}\Hom_{\cO^{\times}_D}(\chi,\overline{\rho}\otimes\pi^{B}(\overline{\rho}))\\
&=\dim_{\F}\Hom_{\cO^{\times}_D}(\chi,\cS^1(\pi^{B'}(\overline{\rho})))\\
&=\dim_{\F}\Hom_{\cO^{\times}_D}(\chi,\cS^1(\pi(\overline{\rho}))^{\oplus d})\\
\end{aligned}
\] 

When $\overline{\rho}$ is absolutely irreducible and generic or $\overline{\rho}\cong\big(\begin{smallmatrix}\chi_1& *\\0&\chi_2\end{smallmatrix}\big)$ with $\chi_1\chi_2^{-1}\neq \o^{-1}$, we have $\dim_{\F}\Hom_{\cO^{\times}_D}(\chi,\cS^1(\pi(\overline{\rho})))=2$ by \cite[Theorem 4.18]{HW-JL2}. When $\overline{\rho}\cong\big(\begin{smallmatrix}\chi_1& *\\0&\chi_2\end{smallmatrix}\big)$ with $\chi_1\chi_2^{-1}= \o^{-1}$, we still have $\dim_{\F}\Hom_{\cO^{\times}_D}(\chi,\cS^1(\pi(\overline{\rho})))=2$  by \cite[Proposition 8.20 and Corollary 8.30]{HW-JL1}. Therefore we have $m=d$, which implies $\mu(\pi^B(\overline{\rho}))\le4$ by (\ref{multd}), (\ref{Ssubsetpi})and (\ref{mu4m}). Since $\pi^B(\overline{\rho})$ is infinite dimensional (\cite[Theorem 7.8]{Scholze}), we conclude that $\mu(\pi^B(\overline{\rho}))=4$ and $\mu(\cS^1(\pi(\overline{\rho})))=8$.
\end{proof}
\begin{corollary}\label{mu4}
For all continuous characters $\chi_1,\chi_2\colon G_{\Q_p}\to \F^{\times}$ with $\chi_1\chi_2^{-1}|_{I_{\Q_p}}$ non-trivial, we have $\cS^1(\Ind_B^G\chi_1\o\otimes\chi_2)\in\cC$ and $\mu(\cS^1(\Ind_B^G\chi_1\o\otimes\chi_2))=4$.
\end{corollary}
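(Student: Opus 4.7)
The plan is to realize $\Ind_B^G\chi_1\o\otimes\chi_2$ as a subquotient of $\pi(\overline{\rho})$ for a suitable continuous nonsplit reducible $\overline{\rho}\colon G_{\Q_p}\to\GL_2(\F)$, apply the $\delta$-functor $\{\cS^i\}$, and combine Lemma \ref{modpresult} with Theorem \ref{mu>2} to pin the multiplicity at $4$. First I would choose an ordering $(\widetilde\chi_1,\widetilde\chi_2)$ of $\{\chi_1,\chi_2\}$ so that Lemma \ref{modpresult} applies to a nonsplit $\overline{\rho}\cong\bigl(\begin{smallmatrix}\widetilde\chi_1&*\\0&\widetilde\chi_2\end{smallmatrix}\bigr)$: since $p\ge 5$ forces $\o|_{I_{\Q_p}}\ne\o^{-1}|_{I_{\Q_p}}$, the hypothesis $\chi_1\chi_2^{-1}|_{I_{\Q_p}}\ne 1$ guarantees that at least one of the two orderings achieves $\widetilde\chi_1\widetilde\chi_2^{-1}|_{I_{\Q_p}}\ne 1,\o$, so Lemma \ref{modpresult} yields $\cS^1(\pi(\overline{\rho}))\in\cC$ with $\mu=8$.

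In the generic subcase $\widetilde\chi_1\widetilde\chi_2^{-1}|_{I_{\Q_p}}\ne \o^{\pm 1}$ (mod $p$ case (II) of \S\ref{block}), $\pi(\overline{\rho})$ is a length-two extension whose constituents are irreducible principal series, on which $\cS^0$ and $\cS^2$ vanish by the results on Scholze's functor in \S\ref{Scholze}. The long exact sequence collapses to
\[
0\to \cS^1(\Ind_B^G\widetilde\chi_2\o\otimes\widetilde\chi_1)\to \cS^1(\pi(\overline{\rho}))\to \cS^1(\Ind_B^G\widetilde\chi_1\o\otimes\widetilde\chi_2)\to 0
\]
in $\cC$, and additivity of $\mu$ gives $\mu_1+\mu_2=8$ for the two summands. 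By Theorem \ref{mu>2} each $\mu_i$ is even and any infinite dimensional object of $\cC$ has $\mu\ge 4$, so it suffices to show both summands are infinite dimensional; this then forces each to equal $4$. The decisive input is the proof of Lemma \ref{modpresult}, which establishes $\dim_\F\Hom_{\cO_D^\times}(\kappa,\cS^1(\pi(\overline{\rho})))=2$ for every Serre weight $\kappa\in W_D(\overline{\rho}\otimes\o)$. Applying $\Hom_{\cO_D^\times}(\kappa,-)$ to the short exact sequence above and using the explicit description of $W_D(\overline{\rho}\otimes\o)$ from \cite[Proposition 6.1]{HW-JL1}, I would distribute the Serre weights between the two constituents and conclude that each $\cS^1$ has nontrivial $D^\times$-socle, hence is infinite dimensional.

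The exceptional subcase $\widetilde\chi_1\widetilde\chi_2^{-1}=\o^{-1}$ exactly (mod $p$ case (III), writing $\chi:=\widetilde\chi_2$) uses the filtration of $\pi(\overline{\rho})$ with graded pieces $\pi_\alpha\otimes\chi\circ\det=\Ind_B^G\chi\o\otimes\chi\o^{-1}$ (irreducible since $p\ge 5$), $\Sp\otimes\chi\circ\det$, and $(\chi\circ\det)^{\oplus 2}$. Using $\cS^1(\chi\circ\det)=0$, the vanishing of $\cS^0,\cS^2$ on Steinberg and on the irreducible principal series, and the finite dimensionality of $\cS^0,\cS^2$ of $\chi\circ\det$, the long exact sequences collapse to
\[
\mu(\cS^1(\pi_\alpha\otimes\chi\circ\det))+\mu(\cS^1(\Sp\otimes\chi\circ\det))=8,
\]
while the sequence $0\to\chi\circ\det\to\Ind_B^G\chi\otimes\chi\to\Sp\otimes\chi\circ\det\to 0$ and the one-dimensionality of $\cS^2(\chi\circ\det)$ give the further identification $\mu(\cS^1(\Ind_B^G\chi\otimes\chi))=\mu(\cS^1(\Sp\otimes\chi\circ\det))$. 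The same Serre weight bookkeeping then forces each summand to be infinite dimensional, so again each equals $4$.

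The main obstacle throughout is ruling out the degenerate split $\mu=0+8$: neither additivity nor the even-parity from Theorem \ref{mu>2} alone distributes the total multiplicity $8$ equally, and without the Serre weight analysis borrowed from \cite[\S 6]{HW-JL1} and the computations behind \cite[Theorem 4.18]{HW-JL2}, one cannot rule out that a single constituent absorbs the entire multiplicity while the other becomes finite dimensional.
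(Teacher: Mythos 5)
Your overall strategy matches the paper's: construct a nonsplit $\overline{\rho}$, apply Lemma \ref{modpresult} to get $\mu(\cS^1(\pi(\overline{\rho})))=8$ (using the swap to $\overline{\rho}_1$ when $\chi_1\chi_2^{-1}|_{I_{\Q_p}}=\o$), then feed the Jordan--H\"older filtration of $\pi(\overline{\rho})$ through the $\delta$-functor, use additivity of $\mu$ and the lower bound from Theorem \ref{mu>2}, and conclude each infinite-dimensional constituent of $\cS^1$ carries multiplicity exactly $4$. You also correctly identify the crux: one must rule out the degenerate split $8=0+8$ by showing both $\cS^1(\Ind_B^G\chi_1\o\otimes\chi_2)$ and $\cS^1(\Ind_B^G\chi_2\o\otimes\chi_1)$ are infinite dimensional.

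However, your proposed mechanism for that step is flawed. You argue that after distributing the Serre weights $\kappa\in W_D(\overline{\rho}\otimes\o)$ between the two constituents, each $\cS^1$ has nontrivial $\cO_D^\times$-socle and is ``hence infinite dimensional.'' This deduction is invalid: nonvanishing of $\Hom_{\cO_D^\times}(\kappa,-)$ does not preclude finite dimensionality. Indeed, for any character $\kappa$ of $\cO_D^\times$ compatible with the central character, the induced representation $\Ind_{\Q_p^\times\cO_D^\times}^{D^\times}\kappa$ is a two-dimensional smooth $D^\times$-representation containing $\kappa$ in its $\cO_D^\times$-socle; so there is no obstruction, purely from socle considerations, to one of the two $\cS^1$-terms being a finite-dimensional representation containing $\kappa$ while the other absorbs all of $\mu=8$. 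The fact that $\kappa$ is nontrivial on $\cO_D^\times\cap D^{\times,1}$ (which is what the proof of Lemma \ref{modpresult} uses) does not help here, since finite-dimensional smooth $D^\times$-representations need not factor through the reduced norm. The paper closes this gap by a different route: it quotes the earlier results \cite[Theorems 8.12 and 8.33]{HW-JL1}, which establish directly that $\cS^1$ of a principal series or of Steinberg (over $\F$) is infinite dimensional; your Serre-weight bookkeeping cannot replace this input without substantial additional work. A secondary, less serious issue is that your case split ``$\widetilde\chi_1\widetilde\chi_2^{-1}=\o^{-1}$ exactly vs.\ $\ne\o^{\pm1}$'' does not align with the hypothesis of Lemma \ref{modpresult}, which is stated in terms of the restriction to inertia; when $\chi_1\chi_2^{-1}|_{I_{\Q_p}}\in\{\o^{\pm1}\}$ but $\chi_1\chi_2^{-1}\notin\{\o^{\pm1}\}$, the representation $\pi(\overline{\rho})$ is still a length-two extension of principal series, so your detailed analysis of the four-term filtration is only needed in the strict equality case, and the paper dispatches even that by simply noting $\pi(\overline{\rho})$ and $\pi(\overline{\rho}_1)$ share Jordan--H\"older factors up to finite-dimensional pieces.
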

\begin{proof}
Let  $\overline{\rho}\cong\big(\begin{smallmatrix}\chi_1& *\\0&\chi_2\end{smallmatrix}\big)$ be a nonsplit extension of $\chi_2$ by $\chi_1$. If $\chi_1\chi_2^{-1}|_{I_{\Q_p}}\neq1,\o$, it follows from Lemma \ref{modpresult} that $\cS^1(\pi(\overline{\rho}))\in\cC$ and $\mu(\cS^1(\pi(\overline{\rho})))=8$. If $\chi_1\chi_2^{-1}|_{I_{\Q_p}}=\o$, let  $\overline{\rho}_1\cong\big(\begin{smallmatrix}\chi_2& *\\0&\chi_1\end{smallmatrix}\big)$ be a nonsplit extension of $\chi_1$ by $\chi_2$. Then $\cS^1(\pi(\overline{\rho}_1))\in\cC$ and $\mu(\cS^1(\pi(\overline{\rho}_1)))=8$. Since $\pi(\overline{\rho})$ and $\pi(\overline{\rho}_1)$ have the same Jordan–H\"older factors (up to finite dimensional representations), we still have $\cS^1(\pi(\overline{\rho}))\in\cC$ and $\mu(\cS^1(\pi(\overline{\rho})))=8$. For any character $\chi_1,\chi_2\colon G_{\Q_p}\to \F^{\times}$, 
\begin{equation}\label{twoterm}
\cS^1(\Ind_B^G\chi_1\o\otimes\chi_2) ,\;\cS^1(\Ind_B^G\chi_2\o\otimes\chi_1)
\end{equation} 
are infinite dimensional by the proof of \cite[Theorem 8.12]{HW-JL1} and \cite[Theorem 8.33]{HW-JL1}. Hence both of the two terms of (\ref{twoterm}) are objects in $\cC$ with multiplicity $4$ when $\chi_1\chi_2^{-1}|_{I_{\Q_p}}$ is non-trivial.
\end{proof}
\begin{theorem}
Let $\Pi\cong(\Ind_B^G\d_2\e\otimes\d_1)_{\rm cont}$ with unitary characters $\d_1,\d_2\colon\Q^{\times}_p\to E^{\times}$ such that  $\d_1\d_2^{-1}|_{\Z_p^{\times}}\not\equiv1\pmod\varpi$. Then $\Shat^1(\Pi)$ is  infinite dimensional and is topologically of finite length.
\end{theorem}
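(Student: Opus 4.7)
The plan is to apply the finiteness criterion Theorem \ref{fincri} to $\Shat^1(\Pi)$, which requires verifying that $\Shat^1(\Pi)\in\widehat{\cC}$ with $\mu(\Shat^1(\Pi))\le 4$ and that $\Shat^1(\Pi)^{\onealg}$ is finite dimensional.

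First I would pick a $G$-invariant open bounded $\cO$-lattice $\Theta\subset\Pi$, so that $\Theta/\varpi\Theta$ is an admissible smooth $\F[G]$-representation whose semisimplified Jordan--H\"older factors coincide with those of the smooth principal series $\Ind_B^G\overline{\d_2}\overline{\e}\otimes\overline{\d_1}$. Next I translate the hypothesis: under local class field theory $I_{\Q_p}\cong\Z_p^{\times}$, the assumption $\d_1\d_2^{-1}|_{\Z_p^{\times}}\not\equiv 1\pmod\varpi$ says that $\chi_1\chi_2^{-1}|_{I_{\Q_p}}\neq 1$ where $\chi_1=\overline{\d_2}$ and $\chi_2=\overline{\d_1}$. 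Corollary \ref{mu4} thus applies and gives $\cS^1(\Ind_B^G\overline{\d_2}\overline{\e}\otimes\overline{\d_1})\in\cC$ with multiplicity $4$.

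Two cases then arise. If $\overline{\d_2}\overline{\e}\neq\overline{\d_1}$, the smooth induction is irreducible and $\mu(\cS^1(\Theta/\varpi\Theta))=4$ directly. In the borderline case $\overline{\d_2}\overline{\e}=\overline{\d_1}=:\chi$, the induction has exactly two Jordan--H\"older factors, $\chi\circ\det$ and $\Sp\otimes\chi\circ\det$, each with multiplicity one. Using the long exact sequence of the $\delta$-functor $\{\cS^i\}$ together with the vanishing/finite-dimensionality of $\cS^0$ and $\cS^2$ on these factors (items (4)--(5) in the theorem of Section \ref{Scholze}), additivity of $\mu$ on $\cC$ distributes the contribution as $\mu(\cS^1(\chi\circ\det))=0$ and $\mu(\cS^1(\Sp\otimes\chi\circ\det))=4$, so again $\mu(\cS^1(\Theta/\varpi\Theta))=4$, and the finite-dimensionality hypothesis of Lemma \ref{SinC} on $\cS^0,\cS^2$ is met. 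Lemma \ref{SinC} then yields $\Shat^1(\Pi)\in\widehat{\cC}$ with $\mu(\Shat^1(\Pi))=4$.

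Since the hypothesis implies the global condition $\d_1/\d_2\not\equiv 1\pmod\varpi$, Theorem \ref{algfin}(2) shows that $\Shat^1(\Pi)^{\onealg}$ is finite dimensional; Theorem \ref{fincri} then concludes that $\Shat^1(\Pi)$ is topologically of finite length. Infinite dimensionality follows from $\mu(\Shat^1(\Pi))=4>0$, since any finite-dimensional object of $\widehat{\cC}$ has vanishing multiplicity. The main technical obstacle is controlling the Jordan--H\"older multiplicities of $\Theta/\varpi\Theta$: to get the upper bound $\mu\le 4$ I need each relevant JH factor to appear with multiplicity exactly one, which is standard but requires some care especially in the borderline case $\overline{\d_2}\overline{\e}=\overline{\d_1}$ via a reduction analysis of the unitary continuous induction.
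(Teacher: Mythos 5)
Your proof is correct and takes essentially the same route as the paper, which proves this theorem in one line by citing exactly the chain of results you invoke: the finiteness criterion (Theorem \ref{fincri}), the finite-dimensionality of locally algebraic vectors (Theorem \ref{algfin}), the passage from $\cS^1$ mod $\varpi$ to $\Shat^1$ (Lemma \ref{SinC}), and the multiplicity computation $\mu=4$ for the mod-$p$ principal series (Corollary \ref{mu4}). Your case split on whether $\Ind_B^G\overline{\d_2}\o\otimes\overline{\d_1}$ is irreducible is superfluous, since Corollary \ref{mu4} already applies uniformly to the full (possibly reducible) principal series under the inertial hypothesis, so the redistribution of $\mu$ among Jordan--H\"older factors in the borderline case is not needed.
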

\begin{proof}
This follows from Theorem \ref{fincri} combined with Theorem \ref{algfin}, Lemma \ref{SinC} and Corollary \ref{mu4}.
\end{proof}

\begin{theorem}\label{mult}
Let $\rho\colon G_{\Q_p}\to\GL_2(E)$ be a continuous absolutely irreducible representation. Suppose
\hfill\begin{enumerate}
\item $\overline{\rho}$ is absolutely irreducible and is generic in the sense of Definition \ref{generic} or
\item $\overline{\rho}^{\rm ss}\cong\chi_1\oplus\chi_2$ with $\chi_1\chi_2^{-1}|_{I_{\Q_p}}$ non-trivial.  
\end{enumerate}
Then $\check{\mathcal{S}}^1(\Pi(\rho))$ is infinite dimensional and is topologically of finite length.
\end{theorem}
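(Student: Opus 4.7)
The plan is to apply the finiteness criterion Theorem \ref{fincri} not to $\check{\cS}^1(\Pi(\rho))$ directly but to its Jacquet--Langlands factor ${\rm JL}(\rho)$ furnished by Corollary \ref{S(Pi)typic}, because the multiplicity hypothesis $\mu\le 4$ there is expected to hold for ${\rm JL}(\rho)$ but not for $\check{\cS}^1(\Pi(\rho))$ itself, whose multiplicity should be $8$.

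By Corollary \ref{S(Pi)typic} we have a $G_{\Q_p}\times D^{\times}$-equivariant isomorphism
\[
\check{\cS}^1(\Pi(\rho))\cong\rho\boxtimes{\rm JL}(\rho),
\]
so as $D^{\times}$-representations $\check{\cS}^1(\Pi(\rho))\cong{\rm JL}(\rho)^{\oplus 2}$. Topological finite length of ${\rm JL}(\rho)$ therefore transfers to $\check{\cS}^1(\Pi(\rho))$, and the space ${\rm JL}(\rho)^{\onealg}$ is a $D^{\times}$-summand of $\check{\cS}^1(\Pi(\rho))^{\onealg}$, which is finite dimensional by Theorem \ref{algfin} (note that $\Pi(\rho)$ is non-ordinary since $\rho$ is absolutely irreducible). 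So it suffices to show that ${\rm JL}(\rho)\in\widehat{\cC}$ and $\mu({\rm JL}(\rho))\le 4$.

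Fix a $G$-invariant open bounded $\cO$-lattice $\Theta\subset\Pi(\rho)$, chosen (as in the proof of Corollary \ref{O_Dalg}) so that the reduction $\overline{\rho}$ is indecomposable. In Case (1) every Jordan--H\"older constituent of $\Theta/\varpi$ is the generic supersingular $\pi(\overline{\rho})$, which by Lemma \ref{modpresult} lies in $\cC$ with $\mu(\cS^1)=8$. In Case (2) the constituents lie in the block of $\pi(\overline{\rho})$, with $\mu(\cS^1)=4$ on each principal series (and, by the same additivity argument used in the proof of Corollary \ref{mu4}, on each Steinberg twist) and $0$ on any finite-dimensional twist. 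The finite-dimensionality hypothesis (\ref{dimless1}) of Lemma \ref{SinC} holds in both cases by the vanishing and finiteness results for $\cS^0$ and $\cS^2$ recalled in \S\ref{Scholze}. Combining with the long exact sequence of the $\delta$-functor $\{\cS^i\}$ and the additivity of $\mu$ on $\cC$, we obtain $\cS^1(\Theta/\varpi)\in\cC$ with $\mu(\cS^1(\Theta/\varpi))=\sum_i\mu(\cS^1(\pi_i))$, summed over the Jordan--H\"older constituents $\pi_i$ of $\Theta/\varpi$. Lemma \ref{SinC} transfers this to $\check{\cS}^1(\Pi(\rho))\in\widehat{\cC}$ with the same multiplicity, and then ${\rm JL}(\rho)\in\widehat{\cC}$ follows as $\widehat{\cC}$ is closed under closed subrepresentations.

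The hard part will be pinning down the length of $\Theta/\varpi$ precisely enough to obtain $\mu(\check{\cS}^1(\Pi(\rho)))\le 8$, equivalently $\mu({\rm JL}(\rho))\le 4$: this means $\Theta/\varpi$ should have length $1$ in Case (1) and length at most $2$ in Case (2). I intend to extract this from the patched picture of \S\ref{global}: the embedding $\Theta/\varpi\hookrightarrow\pi^M(\overline{\rho})\cong\pi(\overline{\rho})^{\oplus d}$ from the proof of Proposition \ref{cocleofPi_x} controls the socle of $\Theta/\varpi$, while the typicness of $M_\infty$ combined with the explicit description of $\pi(\overline{\rho})$ in \S\ref{modpcor} should pin down the length. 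With $\mu({\rm JL}(\rho))\le 4$ in hand, Theorem \ref{fincri} delivers the topological finite length of ${\rm JL}(\rho)$, hence of $\check{\cS}^1(\Pi(\rho))\cong{\rm JL}(\rho)^{\oplus 2}$. Infinite dimensionality is already provided by \cite[Theorem 7.8]{Scholze}, which asserts that $\check{\cS}^1(\Pi(\rho))$ is residually of infinite length.
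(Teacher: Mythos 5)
Your high-level strategy coincides with the paper's: use Corollary \ref{S(Pi)typic} to factor $\check{\cS}^1(\Pi(\rho))\cong\rho\boxtimes\JL(\rho)$, get finite dimensionality of $\JL(\rho)^{\onealg}$ from Theorem \ref{algfin}, transfer the mod-$\varpi$ multiplicity to the Banach side via Lemma \ref{SinC}, and apply the criterion Theorem \ref{fincri} to $\JL(\rho)$ rather than to $\check{\cS}^1(\Pi(\rho))$. All of these ingredients are correctly assembled.

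However, there is a genuine gap at the step you yourself flag as the ``hard part'': the bound $\mu(\JL(\rho))\le 4$, equivalently $\mu(\check{\cS}^1(\Pi(\rho)))\le 8$, equivalently $\mu(\cS^1(\Theta/\varpi))\le 8$, is never proved. The route you sketch for it --- the embedding $\Theta/\varpi\hookrightarrow\pi^M(\overline{\rho})\cong\pi(\overline{\rho})^{\oplus d}$ together with ``typicness of $M_\infty$'' --- cannot deliver it: the embedding only constrains the socle of $\Theta/\varpi$, not its length, and the $\rho^{\Box}$-typicness established in the paper (Lemma \ref{Htypic}, Proposition \ref{L=2L'}) is a statement about $L_\infty$ on the quaternionic side, not about $M_\infty$. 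Invoking the patching machinery here is also a strategic mismatch: the paper obtains the multiplicity entirely locally, without patching. The mechanism is the exactness of Colmez's functor $\checkV$ and its compatibility with reduction mod $\varpi$: $\checkV((\Theta/\varpi)^{\vee})$ is the reduction of a $G_{\Q_p}$-stable $\cO$-lattice in $\rho$, hence a $2$-dimensional $\F$-representation. Since the Jordan--H\"older constituents of $\Theta/\varpi$ lie in the block of $\pi(\overline{\rho})$ and $\checkV$ vanishes on one-dimensional representations and is $2$-dimensional on supersingulars and $1$-dimensional on the remaining irreducibles (\S\ref{p-adic langlands}), this forces: in Case (1), $\Theta/\varpi\cong\pi(\overline{\rho})$, whence $\mu(\cS^1(\Theta/\varpi))=8$ by Lemma \ref{modpresult}; in Case (2), exactly two constituents of $\Theta/\varpi$ survive $\cS^1$ (the two principal series, or the Steinberg twist and $\pi_\alpha$-twist when $\chi_1\chi_2^{-1}=\omega^{\pm1}$), each contributing $4$ by Corollary \ref{mu4}, again $\mu=8$. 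This yields $\mu(\JL(\rho))=4$ exactly, which both feeds Theorem \ref{fincri} and (since $\mu>0$) gives infinite dimensionality --- so you also do not need to appeal separately to Scholze's residual infinite-length theorem. Without this step your proposal does not establish the hypothesis $\mu(\Pi)\le4$ of Theorem \ref{fincri}, so the proof is incomplete.
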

\begin{proof}
It follows from Corollary \ref{S(Pi)typic} that 
\[\Shat^1(\Pi(\rho))\cong\rho\boxtimes {\rm JL}(\rho)
\]
for a unitary Banach representation ${\rm JL}(\rho)$ of $D^{\times}$. And we have $\mu(\JL(\rho))=4$ by Lemma \ref{modpresult} (for $\overline{\rho}$  absolutely irreducible) and Corollary \ref{mu4} (for $\overline{\rho}$ reducible). Since $\JL(\rho)^{\onealg}$ is finite dimensional, the theorem follows from Theorem \ref{fincri}.
\end{proof}

\bibliography{references}

\begin{thebibliography}{10}

\bibitem{BL}
L.~Barthel and R.~Livn\'{e}.
\newblock Irreducible modular representations of {${\rm GL}_2$} of a local
  field.
\newblock {\em Duke Math. J.}, 75(2):261--292, 1994.

\bibitem{berger2010some}
L.~Berger.
\newblock On some modular representations of the {B}orel subgroup of {${\rm
  GL}_2(\bold Q_p)$}.
\newblock {\em Compos. Math.}, 146(1):58--80, 2010.

\bibitem{BreuilCompositio}
C.~Breuil.
\newblock Sur quelques repr\'{e}sentations modulaires et {$p$}-adiques de
  {${\rm GL}_2(\bold Q_p)$}. {I}.
\newblock {\em Compositio Math.}, 138(2):165--188, 2003.

\bibitem{BHHMS2}
C.~Breuil, F.~Herzig, Y.~Hu, S.~Morra, and B.~Schraen.
\newblock Conjectures and results on modular representations of {${\rm GL}_n$}
  of a $p$-adic field ${K}$.
\newblock {\em arXiv:2102.06188}, 2021.

\bibitem{BDJ}
K.~Buzzard, F.~Diamond, and F.~Jarvis.
\newblock On {S}erre's conjecture for mod {$\ell$} {G}alois representations
  over totally real fields.
\newblock {\em Duke Math. J.}, 155(1):105--161, 2010.

\bibitem{CEGGPS1}
A.~Caraiani, M.~Emerton, T.~Gee, D.~Geraghty, V.~Pa\v{s}k\={u}nas, and S.~W.
  Shin.
\newblock Patching and the {$p$}-adic local {L}anglands correspondence.
\newblock {\em Camb. J. Math.}, 4(2):197--287, 2016.

\bibitem{CEGGPS2}
A.~Caraiani, M.~Emerton, T.~Gee, D.~Geraghty, V.~Pa\v{s}k\={u}nas, and S.~W.
  Shin.
\newblock Patching and the {$p$}-adic {L}anglands program for {${\rm GL}_2(\Bbb
  Q_p)$}.
\newblock {\em Compos. Math.}, 154(3):503--548, 2018.

\bibitem{MR703228}
H.~Carayol.
\newblock Sur la mauvaise r\'eduction des courbes de {S}himura.
\newblock {\em C. R. Acad. Sci. Paris S\'er. I Math.}, 296(13):557--560, 1983.

\bibitem{MR2470687}
L.~Clozel, M.~Harris, and R.~Taylor.
\newblock Automorphy for some {$l$}-adic lifts of automorphic mod {$l$}
  {G}alois representations.
\newblock {\em Publ. Math. Inst. Hautes \'Etudes Sci.}, (108):1--181, 2008.
\newblock With Appendix A, summarizing unpublished work of Russ Mann, and
  Appendix B by Marie-France Vign\'eras.

\bibitem{Co}
P.~Colmez.
\newblock Repr\'{e}sentations de {${\rm GL}_2(\bold Q_p)$} et
  {$(\phi,\Gamma)$}-modules.
\newblock {\em Ast\'{e}risque}, (330):281--509, 2010.

\bibitem{MR3272011}
P.~Colmez, G.~Dospinescu, and V.~Pa{\v{s}}k\={u}nas.
\newblock The {$p$}-adic local {L}anglands correspondence for {${\rm GL}_2(\Bbb
  Q_p)$}.
\newblock {\em Camb. J. Math.}, 2(1):1--47, 2014.

\bibitem{DPS-Crelle}
G.~Dospinescu, V.~Pa\v{s}k\={u}nas, and B.~Schraen.
\newblock Gelfand-{K}irillov dimension and the {$p$}-adic {J}acquet-{L}anglands
  correspondence.
\newblock {\em J. Reine Angew. Math.}, 801:57--114, 2023.

\bibitem{MR2251474}
M.~Emerton.
\newblock A local-global compatibility conjecture in the {$p$}-adic {L}anglands
  programme for {${\rm GL}_{2/{\Bbb Q}}$}.
\newblock {\em Pure Appl. Math. Q.}, 2(2):279--393, 2006.

\bibitem{EmertonOrd1}
M.~Emerton.
\newblock Ordinary parts of admissible representations of {$p$}-adic reductive
  groups {I}. {D}efinition and first properties.
\newblock {\em Ast\'{e}risque}, (331):355--402, 2010.

\bibitem{EmertonOrd2}
M.~Emerton.
\newblock Ordinary parts of admissible representations of {$p$}-adic reductive
  groups {II}. {D}erived functors.
\newblock {\em Ast\'{e}risque}, (331):403--459, 2010.

\bibitem{emerton2004locally}
M.~Emerton.
\newblock Locally analytic vectors in representations of locally {$p$}-adic
  analytic groups.
\newblock {\em Mem. Amer. Math. Soc.}, 248(1175):iv+158, 2017.

\bibitem{MR4077579}
M.~Emerton and V.~Pa{\v{s}}k\={u}nas.
\newblock On the density of supercuspidal points of fixed regular weight in
  local deformation rings and global {H}ecke algebras.
\newblock {\em J. \'Ec. polytech. Math.}, 7:337--371, 2020.

\bibitem{gee2011automorphic}
T.~Gee.
\newblock Automorphic lifts of prescribed types.
\newblock {\em Math. Ann.}, 350(1):107--144, 2011.

\bibitem{Gee-Geraghty}
T.~Gee and D.~Geraghty.
\newblock The {B}reuil-{M}\'{e}zard conjecture for quaternion algebras.
\newblock {\em Ann. Inst. Fourier (Grenoble)}, 65(4):1557--1575, 2015.

\bibitem{Gee-Kisin}
T.~Gee and M.~Kisin.
\newblock The {B}reuil-{M}\'ezard conjecture for potentially {B}arsotti-{T}ate
  representations.
\newblock {\em Forum Math. Pi}, 2:e1, 56, 2014.

\bibitem{MR4386819}
T.~Gee and J.~Newton.
\newblock Patching and the completed homology of locally symmetric spaces.
\newblock {\em J. Inst. Math. Jussieu}, 21(2):395--458, 2022.

\bibitem{MR2822861}
T.~Gee and D.~Savitt.
\newblock Serre weights for quaternion algebras.
\newblock {\em Compos. Math.}, 147(4):1059--1086, 2011.

\bibitem{HW-JL2}
Y.~Hu and H.~Wang.
\newblock On some {$p$}-adic and mod {$p$} representations of quaternion
  algebra over {$\Bbb{Q}_p$}.
\newblock {\em J. Reine Angew. Math.}, 812:163--210, 2024.

\bibitem{HW-JL1}
Y.~Hu and H.~Wang.
\newblock On some {${\rm mod}\,p$} representations of quaternion algebra over
  {$\Bbb Q_p$}.
\newblock {\em Compos. Math.}, 160(11):2585--2655, 2024.

\bibitem{Huber}
R.~Huber.
\newblock {\em \'{E}tale cohomology of rigid analytic varieties and adic
  spaces}.
\newblock Aspects of Mathematics, E30. Friedr. Vieweg \& Sohn, Braunschweig,
  1996.

\bibitem{Kisin-FM}
M.~Kisin.
\newblock The {F}ontaine-{M}azur conjecture for {${\rm GL}_2$}.
\newblock {\em J. Amer. Math. Soc.}, 22(3):641--690, 2009.

\bibitem{MR2600871}
M.~Kisin.
\newblock Moduli of finite flat group schemes, and modularity.
\newblock {\em Ann. of Math. (2)}, 170(3):1085--1180, 2009.

\bibitem{LiO}
H.~Li and F.~van Oystaeyen.
\newblock {\em Zariskian filtrations}, volume~2 of {\em $K$-Monographs in
  Mathematics}.
\newblock Kluwer Academic Publishers, Dordrecht, 1996.

\bibitem{Ludwig}
J.~Ludwig.
\newblock A quotient of the {L}ubin-{T}ate tower.
\newblock {\em Forum Math. Sigma}, 5:Paper No. e17, 41, 2017.

\bibitem{Paskunas-extension}
V.~Pa{\v{s}}k\={u}nas.
\newblock Extensions for supersingular representations of {${\rm GL}_2(\Bbb
  Q_p)$}.
\newblock {\em Ast\'{e}risque}, (331):317--353, 2010.

\bibitem{PaskunasIHES}
V.~Pa{\v{s}}k{\={u}}nas.
\newblock The image of {C}olmez's {M}ontreal functor.
\newblock {\em Publ. Math. Inst. Hautes \'{E}tudes Sci.}, 118:1--191, 2013.

\bibitem{Paskunas-BM}
V.~Pa{\v{s}}k{\={u}}nas.
\newblock On the {B}reuil-{M}\'{e}zard conjecture.
\newblock {\em Duke Math. J.}, 164(2):297--359, 2015.

\bibitem{pavskunas20162}
V.~Pa{\v{s}}k\={u}nas.
\newblock On 2-dimensional 2-adic {G}alois representations of local and global
  fields.
\newblock {\em Algebra Number Theory}, 10(6):1301--1358, 2016.

\bibitem{Paskunas-JL}
V.~Pa{\v{s}}k\={u}nas.
\newblock On some consequences of a theorem of {J}. {L}udwig.
\newblock {\em J. Inst. Math. Jussieu}, 21(3):1067--1106, 2022.

\bibitem{pavskunas2021finiteness}
V.~Pa\v{s}k\={u}nas and S.-N. Tung.
\newblock Finiteness properties of the category of {${\rm mod}\,p$}
  representations of {${\rm GL}_2 (\Bbb Q_p)$}.
\newblock {\em Forum Math. Sigma}, 9:Paper No. e80, 39, 2021.

\bibitem{Scholze}
P.~Scholze.
\newblock On the {$p$}-adic cohomology of the {L}ubin-{T}ate tower.
\newblock {\em Ann. Sci. \'{E}c. Norm. Sup\'{e}r. (4)}, 51(4):811--863, 2018.
\newblock With an appendix by Michael Rapoport.

\bibitem{serre1972proprietes}
J.-P. Serre.
\newblock Propri\'{e}t\'{e}s galoisiennes des points d'ordre fini des courbes
  elliptiques.
\newblock {\em Invent. Math.}, 15(4):259--331, 1972.

\bibitem{MR2290604}
R.~Taylor.
\newblock On the meromorphic continuation of degree two {$L$}-functions.
\newblock {\em Doc. Math.}, pages 729--779, 2006.

\bibitem{tung2021automorphy}
S.-N. Tung.
\newblock On the automorphy of $2$-dimensional potentially semistable
  deformation rings of {$G_{\Bbb{Q}_p}$}.
\newblock {\em Algebra Number Theory}, 15(9):2173--2194, 2021.

\end{thebibliography}

\bibliographystyle{plain}

\bigskip

\noindent  Department of Mathematical Sciences, Tsinghua University, Beijing, 100084\\
{\it E-mail:} {\ttfamily liuhao21@mails.tsinghua.edu.cn}\\

\noindent  Academy for Multidisciplinary Studies, Capital Normal University, Beijing, 100048\\
{\it E-mail:} {\ttfamily haoran@cnu.edu.cn}\\

 \end{document}